\documentclass[11pt,letterpaper,reqno,oneside]{amsart}
\usepackage[letterpaper, includehead, includefoot,top=25mm, bottom=25mm, left=25mm, right=25mm,marginparwidth=22mm, marginparsep=4mm]{geometry}
\usepackage{times}
\usepackage{comment}
\usepackage{amsfonts}
\usepackage{amsmath}
\usepackage{amssymb}
\usepackage{amsthm}
\usepackage{thmtools}
\usepackage[T1]{fontenc}
\usepackage{enumerate}
\usepackage{booktabs}
\usepackage{array}
\usepackage{caption}
\usepackage{graphicx}

\usepackage{xcolor}
\usepackage{hyperref}
\hypersetup{
    colorlinks=true,
    linkcolor=blue,
    citecolor=blue,
    filecolor=blue,
    urlcolor=blue
}

\usepackage[nameinlink,capitalize,noabbrev]{cleveref}

\declaretheorem[name=Theorem,numberwithin=section]{theorem}
\declaretheorem[sibling=theorem,name=Corollary]{corollary}
\declaretheorem[sibling=theorem,name=Lemma]{lemma}
\declaretheorem[sibling=theorem,name=Proposition]{proposition}
\declaretheorem[sibling=theorem,name=Remark,style=remark]{remark}
\declaretheorem[sibling=theorem,name=Definition,style=definition]{definition}
\declaretheorem[sibling=theorem,name=Notation,style=definition]{notation}
\declaretheorem[sibling=theorem,name=Example,style=definition]{example}

\newtheorem{introthm}{Theorem}

\newcommand{\PP}[0]{\ensuremath{\mathbf{P}}}
\newcommand{\CC}[0]{\ensuremath{\mathbf{C}}}
\newcommand{\ZZ}[0]{\ensuremath{\mathbf{Z}}}
\newcommand{\QQ}[0]{\ensuremath{\mathbf{Q}}}
\newcommand{\RR}[0]{\ensuremath{\mathbf{R}}}
\renewcommand{\AA}[0]{\ensuremath{\mathbf{A}}}
\newcommand{\cE}[0]{\ensuremath{\mathcal{E}}}
\newcommand{\cO}[0]{\ensuremath{\mathcal{O}}}
\newcommand{\cA}[0]{\ensuremath{\mathcal{A}}}

\newcommand{\Aut}[0]{\ensuremath{\operatorname{Aut}}}
\newcommand{\PGL}[0]{\ensuremath{\operatorname{PGL}}}
\newcommand{\GL}[0]{\ensuremath{\operatorname{GL}}}
\newcommand{\Id}[0]{\ensuremath{\operatorname{Id}}}

\newcommand{\Proj}[0]{\ensuremath{\operatorname{Proj}}}
\newcommand{\diag}[0]{\ensuremath{\operatorname{diag}}}

\newcommand{\Res}[0]{\ensuremath{\operatorname{Res}}}
\newcommand{\NS}[0]{\ensuremath{\operatorname{NS}}}
\newcommand{\End}[0]{\ensuremath{\operatorname{End}}}
\newcommand{\Hdg}[0]{\ensuremath{\mathrm{Hdg}}}
\newcommand{\disc}[0]{\ensuremath{\operatorname{disc}}}
\newcommand{\Tr}[0]{\ensuremath{\operatorname{Tr}}}
\newcommand{\Nm}[0]{\ensuremath{\operatorname{N}}}
\newcommand{\Ort}[0]{\ensuremath{\mathrm{O}}}
\newcommand{\prim}[0]{\ensuremath{\mathrm{prim}}}
\newcommand{\tr}[0]{\ensuremath{\mathrm{tr}}}
\newcommand{\alg}[0]{\ensuremath{\mathrm{alg}}}
\newcommand{\hyp}[1]{\ensuremath{\mathsf{H}(#1)}}

\newcommand{\PW}[0]{\ensuremath{\PP(1,2,2,3,3)}}
\newcommand{\Gm}[0]{\ensuremath{\CC^{\times}}}

\usepackage{todonotes}

\begin{document}
%----------------------------------------------------------------------

\title[Automorphisms and Hodge isometries of surfaces with $K^2=p_g=1$]{Automorphisms and Hodge isometries\\ of surfaces with $K^2=p_g=1$}

\author[J. Duque Franco]{Jorge Duque Franco}
\address{Dirección de Investigaci\'on, Vicerrector\'ia Acad\'emica and Instituto de Matem\'aticas, Universidad de Talca,
  Casilla 721, Talca, Chile}
\email{jorge.duque@utalca.cl}

\author[A. Liendo]{Alvaro Liendo}
\address{Instituto de Matem\'aticas, Universidad de Talca, Casilla 721, Talca, Chile}
\email{aliendo@utalca.cl}

\author[P. Montero]{Pedro Montero}
\address{Departamento de Matem\'atica, Universidad T\'ecnica Federico Santa Mar\'ia, Avenida Espa\~na 1680, Valpara\'iso, Chile}
\email{pedro.montero@usm.cl}

\date{\today}

\thanks{{\it 2020 Mathematics Subject Classification}: 14J50, 14J29, 14C30, 14L30.\\
  \mbox{\hspace{11pt}}{\it Key words}: surface of general type, automorphism group, canonical model, weighted complete intersection, Hodge structure, Torelli problem.\\
  \mbox{\hspace{11pt}}The second and third authors were partially supported by ANID via Fondecyt Regular projects 1240101 and 1231214 and Fondecyt Exploraci\'on project 13250049.}

\begin{abstract}
Let $S$ be a minimal complex surface of general type with $K_S^2=p_g(S)=1$. By a theorem of Catanese, the canonical model of $S$ is a complete intersection of two sextics in $\PW$ whose equations admit a normal form, and the global Torelli theorem fails for these surfaces. We describe $\Aut(S)$ in terms of the coefficients of the normal form, prove that $|\Aut(S)|\leq72$, and classify the surfaces attaining the bound: two surfaces with singular canonical model and a one-parameter family $X_r$ of smooth surfaces. For the surfaces $X_r$ we compute a basis of the primitive cohomology consisting of eigenvectors of a subgroup $\ZZ/6\ZZ\times\ZZ/6\ZZ$ of $\Aut(X_r)$, given by residues of explicit polynomials, together with the intersection form in this basis. We then show that the natural map from $\Aut(X_r)$ to the group of Hodge isometries of the primitive lattice modulo $\pm1$ is injective but has image of index at least $331\,776$, so that the Torelli principle for automorphisms fails for these surfaces.
\end{abstract}
\maketitle

%\tableofcontents

%----------------------------------------------------------------------
\section*{Introduction}
%----------------------------------------------------------------------

The Torelli problem asks whether a smooth projective variety is determined by its polarized Hodge structure. For minimal surfaces of general type with $K^2=p_g=1$, the subject of this paper, the answer is negative: by a theorem of Catanese, the period map of these surfaces has degree at least two \cite[Theorem~3.3]{Cat80}, so they give a counterexample to the global Torelli problem, see also \cite{Tod80} for counterexamples with $p_g=1$, $q=0$ and $2\leq K^2\leq8$. A related question is to what extent the automorphism group of a variety is determined by its polarized Hodge structure. For a K3 surface $X$ the answer is given by the strong Torelli theorem: an isometry of $\operatorname{H}^2(X,\ZZ)$ preserving the Hodge decomposition and the ample cone is induced by a unique automorphism of $X$ \cite{pyatetskii1971torelli,burns1975torelli}. For several Klein hypersurfaces, the automorphism group of the hypersurface and the automorphism group of its polarized Hodge structure coincide up to the involution $\pm1$ \cite{GALMVL24}, as predicted by a Torelli principle for automorphisms. In this paper we study this question for the surfaces with $K^2=p_g=1$ whose automorphism group is as large as possible. This requires first classifying these surfaces, which is the content of the first part of the paper.

The automorphism group of a minimal complex surface of general type $S$ is finite, and by a theorem of Xiao its order is at most $42^2K_S^2$ \cite{Xia94,Xia95}, the analogue of the classical bound $84(g-1)$ of Hurwitz for curves of genus $g\geq2$ \cite{Hur93}, which is attained by the Klein quartic \cite{Klein}. For $K_S^2=1$, Noether's inequality $K_S^2\geq2p_g(S)-4$ \cite[Theorem~VII.3.1]{BHPV04} gives $p_g(S)\leq2$. If $p_g(S)=2$, the canonical model of $S$ is a hypersurface of degree ten in $\PP(1,1,2,5)$, and Wen \cite{Wen21} and Zhao \cite{Zha26} proved that $|\Aut(S)|\leq200$ in this case. Throughout this paper we work over the field $\CC$ of complex numbers, $S$ is a minimal surface of general type with $K_S^2=p_g(S)=1$ and $X$ is its canonical model, a normal surface with at most rational double points. Since the minimal resolution $S\to X$ is unique, $\Aut(S)=\Aut(X)$. By a theorem of Catanese \cite{Cat79,Cat80}, $X$ is a complete intersection of two sextics in the weighted projective space $\PW$, contained in its smooth locus, and its equations can be put in the normal form
\[f=z_3^2+x_0z_4\,a(y)+F(y),\qquad g=z_4^2+x_0z_3\,b(y)+G(y).\]
Here $x_0,x_1,x_2,z_3,z_4$ are the coordinates of $\PW$, of weights $1,2,2,3,3$, we write
\[y=(y_0,y_1,y_2)=(x_0^2,x_1,x_2)\]
for the coordinates of $\PP(2,2,2)\simeq\PP^2$, and $a,b$ are linear forms and $F,G$ cubic forms in $y$, see \cref{thm:catanese}. In \cref{prop:autX} we describe $\Aut(X)$ in terms of the normal form: after a suitable choice of coordinates, every automorphism of $X$ is induced by a graded automorphism
\[x_0\mapsto x_0,\qquad (y_1,y_2)\mapsto A(y_1,y_2),\qquad z_3\mapsto\mu z_3,\qquad z_4\mapsto\nu z_4,\]
with $A\in\GL_2$ and $\mu,\nu\in\Gm$, or by such an automorphism followed by the exchange of $z_3$ and $z_4$, and the triples $(A,\mu,\nu)$ that occur are the solutions of explicit equations in $a,b,F,G$. This description reduces the computation of $|\Aut(X)|$ to a case analysis, which gives the following.

\begin{introthm}
\label{thm:A}
Let $S$ be a minimal complex surface of general type with $K_S^2=p_g(S)=1$. Then $|\Aut(S)|\leq 72$. Moreover, equality holds if and only if $S$ is the minimal resolution of one of the following complete intersections in $\PW$:
\[X_{2D_4}\colon\quad z_3^2+x_0^6+x_1^3+x_2^3=z_4^2+x_1^3-x_2^3=0,\]
\[X_{3A_2}\colon\quad z_3^2+x_0^6+x_1^3-x_2^3=z_4^2+x_0^6+x_1^3+x_2^3=0,\]
\[X_r\colon\quad z_3^2+rx_0^6+x_1^3-r^2x_2^3=z_4^2+x_0^6+x_1^3-x_2^3=0,\qquad r\in\Gm\smallsetminus\{\pm1\}.\]
Here, $X_{2D_4}$ has two rational double points of type $D_4$ and $\Aut(X_{2D_4})\simeq((\ZZ/3\ZZ)^2\rtimes\ZZ/4\ZZ)\times\ZZ/2\ZZ$. The surface $X_{3A_2}$ has three rational double points of type $A_2$ and $\Aut(X_{3A_2})\simeq(\ZZ/3\ZZ)^2\times \mathbf{D}_4$, where $\mathbf{D}_4$ is the dihedral group of order eight. Every $X_r$ is smooth, with $\Aut(X_r)\simeq(\ZZ/6\ZZ\times\ZZ/6\ZZ)\rtimes\ZZ/2\ZZ$, the group of order two exchanging the two factors. The only isomorphisms among these surfaces are $X_r\simeq X_{r^{-1}}$.
\end{introthm}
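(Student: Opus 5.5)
We will deduce \cref{thm:A} from the description of $\Aut(X)$ in \cref{prop:autX}. Choose coordinates as in that proposition, so that every automorphism is induced by a graded map $\phi=(A,\mu,\nu)$, possibly composed with the swap $\sigma\colon z_3\leftrightarrow z_4$. Let $H\subseteq\Aut(X)$ be the subgroup of automorphisms that do not exchange $z_3,z_4$; it has index at most $2$.

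\textbf{Structure of $H$.} Consider the homomorphism $\rho\colon H\to\PGL_2$ recording the action of $A$ on the pencil $\PP^1_{y_1,y_2}$, or equivalently the action on $\PP^2_y$ fixing $y_0=x_0^2$.
\begin{itemize}
\item An element of the kernel acts on $(y_1,y_2)$ by a scalar $\lambda$.
\item Comparing the normal form $f,g$ after substitution forces $\mu^2=\nu^2$ to be related to $\lambda^3$ and to the $y_0^3$ coefficients of $F,G$. It also forces the terms $x_0z_4a$, $x_0z_3b$ to be compatible.
\item One checks directly that the kernel is a subgroup of $\{(\lambda,\mu,\nu)\}$ of order at most $36$, and exactly $36$ only when $a=b=0$ and $F,G$ are both of the form $c\,y_0^3+(\text{binary cubic in }y_1,y_2)$. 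In that case the kernel is generated by $\lambda\in\mu_3$ and the sign changes $\mu,\nu\in\{\pm1\}$ (the factor $6=2\cdot3$ in each $\ZZ/6$).
\end{itemize}
The image of $\rho$ is a finite subgroup of $\PGL_2$ preserving the binary cubics $F_0(y_1,y_2)$ and $G_0(y_1,y_2)$ that appear in $F|_{y_0=0}$ and $G|_{y_0=0}$. It also preserves the pencil they span up to the scalars coming from $\mu,\nu$. Since $X$ is a complete intersection with at most rational double points, $F$ and $G$ cannot share too many common factors. The group must therefore fix a finite set of points of $\PP^1$ (roots of $F_0$, $G_0$, and of members of their pencil), which bounds the image by the stabilizer of a configuration of at most six points. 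This leaves a small list: cyclic, dihedral, or $A_4$-type images.

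\textbf{Case analysis.} We split according to whether $a,b$ vanish and to the type of the pair of binary cubics $(F_0,G_0)$:
\begin{itemize}
\item they share a root;
\item both have three distinct roots, with simultaneous diagonalization $x_1^3\pm x_2^3$;
\item one has a multiple root.
\end{itemize}
In each case we bound $|H|\cdot[\Aut(X):H]$.

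Cases with $a\neq0$ or $b\neq0$ cut the kernel down drastically, giving orders well below $72$. The extremal case is $a=b=0$ with $F_0,G_0$ simultaneously diagonalizable, of the form $x_1^3+\alpha x_2^3$ and $x_1^3+\beta x_2^3$. Normalizing scalars brings the equations to the shape $z_3^2+c_1x_0^6+x_1^3+\alpha x_2^3$, $z_4^2+c_2x_0^6+x_1^3+\beta x_2^3$. Then $H$ contains the diagonal group $(\mu_3)^2$ acting on $x_1,x_2$, the signs on $z_3,z_4$, and possibly the root $x_0\mapsto -x_0$ absorbed in the weighted $\Gm$-action. This yields $|H|\le 36$, and $|\Aut(X)|=72$ exactly when a swap-type element exists.

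The swap condition, possibly combined with $x_1\leftrightarrow x_2$ or with a rescaling, is an explicit condition on $(c_1,c_2,\alpha,\beta)$. Solving it produces the family $X_r$ and the two special members where $X$ acquires singularities. At those members we identify the singular points ($2D_4$ or $3A_2$) by local computation at the points where the Jacobian drops rank, e.g.\ $x_0=z_3=z_4=0$. There we also identify the group structure: the extra symmetries $x_1\leftrightarrow x_2$ produce the $\ZZ/4$ and $\mathbf D_4$ factors instead of the semidirect product with $\ZZ/2$. Smoothness of $X_r$ for $r\neq\pm1$ and the group $(\ZZ/6\times\ZZ/6)\rtimes\ZZ/2$ follow by direct inspection, with $\ZZ/6=\langle z_3\mapsto -z_3\rangle\times\langle x\text{-}\mu_3\rangle$ on each factor and $\sigma$ exchanging them.

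\textbf{Isomorphisms and main obstacle.} An isomorphism between two surfaces in the list is again graded by the same argument as \cref{prop:autX}, so it must preserve the normal form. This forces $r'\in\{r,r^{-1}\}$ via the swap combined with rescaling $x_2$, and distinguishes the singular ones by their singularities. The main obstacle is the exhaustive case analysis bounding the image of $\rho$ and excluding every non-diagonal configuration from reaching $72$. I would organize it by the $j$-type (root configuration) of the pencil $\langle F_0,G_0\rangle$, using that the image of $\rho$ acts on this pencil.
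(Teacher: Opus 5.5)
There is a genuine gap in your bound on the image of $\rho$, and it is in the case closest to the extremal one. Suppose $C=F(0,y_1,y_2)$ and $D=G(0,y_1,y_2)$ are both cubes of linear forms, say $C=y_1^3$ and $D=y_2^3$. Then the only points of $\PP^1$ your argument produces are $0$ and $\infty$. Members of the pencil $\langle y_1^3,y_2^3\rangle$ are moved, since $\mu^2$ and $\nu^2$ are independent scalars, and the stabilizer of $\{0,\infty\}$ in $\PGL_2$ is infinite. Finiteness here comes only from the other monomials of the equations: those of $F$ and $G$ divisible by $y_0$, and those of $a$ and $b$. (Monomials of the first kind must exist when $a=0$ or $b=0$, because $X$ has isolated singularities.) Each monomial imposes a character condition on the torus $\diag(s,u)$, and the paper bounds the image by the index of the lattice these characters generate. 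This case is not harmless. It contains surfaces with $|\Aut(X)|=64$ and automorphisms of order $7$ and $13$. The count reaches exactly $72$ for $z_3^2+x_0^6+x_1^3=z_4^2+x_0^6+x_2^3=0$: there are $36$ diagonal elements not exchanging $z_3,z_4$, plus their compositions with $x_1\leftrightarrow x_2$, $z_3\leftrightarrow z_4$. That surface must be excluded by showing it has a simple elliptic singularity $\widetilde E_8$ at $(0:0:1:0:\sqrt{-1})$, locally $z_3^2+x_1^3+x_0^6=0$, so it is not a canonical model. Without this, neither the bound nor the ``only if'' direction is proved.

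The non-degenerate count also needs repair.

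\emph{Kernel.} The kernel of $\rho$ consists of the elements with scalar $A$ and has order $|V|\cdot|B\cap\Gm|\le 12$. The group $\mu_3\times\{\pm1\}^2$ you describe has order $12$, not $36$.

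\emph{Image.} A bound by the stabilizer of six points is too weak, since the octahedral group of order $24$ preserves six points. What works is that $H$ preserves the roots of $C$ and of $D$ separately. These are disjoint because $C,D$ are coprime, which is part of Catanese's normal form and should replace ``cannot share too many common factors''. So the image of $H$ has order at most $6$, embedding into $\mathfrak S_3$ when $C$ has three distinct roots.

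\emph{Interplay.} This still gives only $12\cdot6\cdot2=144$, and reaching $72$ needs three further observations:
\begin{enumerate}[(i)]
\item If $|B\cap\Gm|=3$, $a=b=0$ and a swap exists, then both $y_0^3$-coefficients are nonzero. This forces $\mu^2=\nu^2=1$ on $H$, which kills the reflections of $\mathfrak S_3$: in coordinates $C=y_1^3+y_2^3$, $D=y_1^3-y_2^3$, a reflection fixing $C$ negates $D$.
\item If $|B\cap\Gm|=2$, then $a=b=0$, and a nonzero $y_0^2$-term of $F$ or $G$ makes the image cyclic.
\item If exactly one of $a,b$ vanishes, the naive count $2\cdot3\cdot12$ is again $72$. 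It is cut down only because a swap forces $a=0\iff b=0$, so ``well below $72$'' needs this argument.
\end{enumerate}

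\emph{Classification.} This step is wrong as stated. The claims $|H|\le36$ and ``$72$ exactly when a swap exists'' fail for $X_{2D_4}$. It has no element exchanging $z_3,z_4$, yet $|H|=72$, because $x_1\leftrightarrow x_2$, $z_4\mapsto\sqrt{-1}z_4$ is allowed when $G$ has no $y_0^3$-term. Nor are $X_{2D_4}$ and $X_{3A_2}$ special members of the family $X_r$: at $r=\pm1$ one gets $C=D$. They come from separate branches, respectively with no swap at all and with a swap acting diagonally on $(y_1,y_2)$.

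\emph{Isomorphisms.} Separating $X_{2D_4}$, $X_{3A_2}$ and the $X_r$ by their singularities is correct, and simpler than the paper's invariants. However, $X_r\simeq X_{r'}\Rightarrow r'=r^{\pm1}$ still requires two steps: showing that an isomorphism has no unipotent part $y_i\mapsto y_i+e_iy_0$, and tracking invariants of the coefficients under the remaining diagonal, antidiagonal and swap changes.
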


The bound of \cref{thm:A} is substantially sharper than Xiao's general bound $1764$ and than the bound $200$ for $p_g=2$. The reason is that the binary cubic forms $C=F(0,y_1,y_2)$ and $D=G(0,y_1,y_2)$ are coprime, so that $\Aut(X)$ acts on a set of at most six points of $\PP^1$ preserving a partition of it into two subsets of three points, and a projective transformation of $\PP^1$ fixing three points is the identity. This argument fails only when $C$ and $D$ are both cubes of linear forms. For these surfaces the order of the group is controlled by a lattice of characters and we prove that $|\Aut(X)|\leq64$ (\cref{prop:degenerate}). 

We now return to the Torelli principle for automorphisms. The extremal surfaces of \cref{thm:A} satisfy $a=b=0$, so they are bidouble covers of $\PP^2$ branched along the cubics $F=0$, $G=0$ and the line $y_0=0$ \cite[Proposition~2.2]{Cat80}, and on the locus of bidouble covers the differential of the period map is not injective \cite[Theorem~3.1]{Cat80}. In the second part of the paper we compute the Hodge structure of the smooth surfaces $X_r$ and we compare $\Aut(X_r)$ with the automorphism group of the polarized Hodge structure on $\operatorname{H}^2(X_r,\ZZ)$. Write $\Lambda=\operatorname{H}^2(X_r,\ZZ)$, a unimodular lattice of signature $(3,18)$, and $L=K^\perp\subset\Lambda$, which is even unimodular of signature $(2,18)$. Since $K^2=1$ we have $\Lambda=\ZZ K\oplus L$, so the Hodge isometries of $\Lambda$ fixing $K$ are exactly the isometries of $L$ preserving the Hodge decomposition of $L\otimes\CC$. We write $\Ort_\Hdg(L)$ for this group. Every automorphism of $X_r$ fixes $K$ and induces an element of $\Ort_\Hdg(L)$.

\begin{introthm}
\label{thm:B}
Let $r\in\Gm\smallsetminus\{\pm1\}$. The N\'eron--Severi
lattice of $X_r$ contains a root system of type
$A_2\sqcup A_2\sqcup D_4\sqcup D_4$ orthogonal to $K$,
whose Weyl group $W$, of order $6^2\cdot192^2$, acts on $L$
by Hodge isometries fixing $\operatorname{H}^{2,0}(X_r)$
pointwise. Consequently, $
|\Ort_\Hdg(L)|\geq36\,|W|=47\,775\,744.
$ The natural maps
\[
\Aut(X_r)\longrightarrow\Ort_\Hdg(L),
\qquad
\Aut(X_r)\longrightarrow\Ort_\Hdg(L)/\{\pm1\}
\]
are injective, and their images have index at least
$663\,552$ and $331\,776$, respectively.
\end{introthm}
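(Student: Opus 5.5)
The plan is to build the root system explicitly from curves on $X_r$, to let the reflections in it generate a large group of Hodge isometries, and then to bound $\Ort_\Hdg(L)$ from below by a semidirect product $W\rtimes H$, where $H$ is the image of the subgroup $\ZZ/6\ZZ\times\ZZ/6\ZZ\subset\Aut(X_r)$.

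\emph{Step 1: the root system.} Since $a=b=0$, the surface $X_r$ is the bidouble cover of $\PP^2$ branched along $F=0$, $G=0$ and $y_0=0$. The pencil spanned by $F=ry_0^3+y_1^3-r^2y_2^3$ and $G=y_0^3+y_1^3-y_2^3$ contains several members that split into three concurrent lines. For instance,
\[F-G=(r-1)y_0^3-(r^2-1)y_2^3,\qquad F-rG=(1-r)y_1^3+(r-r^2)y_2^3,\]
and similarly $F+r^2G$. On $X_r$ the relation $z_3^2-\lambda z_4^2=-(F-\lambda G)$ makes the preimage of each such line split into two components.
\begin{itemize}
\item I will compute the intersection numbers of these components with each other and with $K$, using the projection formula and the local structure of the bidouble cover.
\item From them I will extract differences of classes of equal degree; these are orthogonal to $K$ and have square $-2$.
\item I will check that these differences span a root lattice $R\subset\NS(X_r)\cap L$ of type $A_2\sqcup A_2\sqcup D_4\sqcup D_4$.
\end{itemize}
As a cross-check I will use the eigenbasis and the intersection form computed later in the paper: the classes of $R$ should lie in eigenspaces of $\ZZ/6\ZZ\times\ZZ/6\ZZ$ whose characters do not occur in $\operatorname{H}^{2,0}\oplus\operatorname{H}^{0,2}$. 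I expect this identification of explicit $(-2)$-classes and their intersection matrix to be the main obstacle. If the line-component approach does not give all four components, I would first try to use the $\ZZ/6\ZZ\times\ZZ/6\ZZ$-action to move the classes already found and close up the root system.

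\emph{Step 2: the group $W$ and the lower bound $36|W|$.} Every root $\delta\in R$ is algebraic, so $\delta\perp\operatorname{H}^{2,0}$. Hence the reflection $s_\delta$ is an isometry of $L$ that fixes $\operatorname{H}^{2,0}$ pointwise, and therefore lies in $\Ort_\Hdg(L)$. This gives $W=W(A_2)^2\times W(D_4)^2\subset\Ort_\Hdg(L)$, of order $6^2\cdot192^2$.

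Next I show that the set of roots of $R$ is preserved by $H$; this holds because $H$ permutes the reducible members of the pencil and their components. So $H$ normalizes $W$. Finally, $W$ acts trivially on $R^\perp\otimes\CC$, which contains $\operatorname{H}^{2,0}$ and the eigenvectors not in $R\otimes\CC$. Using the eigenbasis, I check that every nontrivial element of $H$ has some character on $R^\perp\otimes\CC$ that is nontrivial. This forces $H\cap W=1$, so $|\Ort_\Hdg(L)|\ge|WH|=36|W|$.

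\emph{Step 3: injectivity and indices.}
\begin{itemize}
\item \emph{Injectivity.} It suffices to show that each nontrivial $\sigma\in\Aut(X_r)$ acts nontrivially on $L$, or even on $L$ up to sign. For $\sigma\in\ZZ/6\ZZ\times\ZZ/6\ZZ$ this is read off from the characters on the eigenbasis. For the elements exchanging $z_3$ and $z_4$, I check that they do not preserve some eigenline on which a suitable element of the normal subgroup acts by a non-real character. The same computation shows that $-1\notin$ the image of $\Aut(X_r)$, because $-1$ acts as the scalar $-1$ on every eigenvector.
\item \emph{Index in $\Ort_\Hdg(L)$.} The first image has order $72$, so its index is at least $36|W|/72=|W|/2=663\,552$.
\item \emph{Index modulo $\pm1$.} Because $-1$ is not in the image and $\Aut(X_r)\to\Ort_\Hdg(L)$ is injective, the map $\Aut(X_r)\to\Ort_\Hdg(L)/\{\pm1\}$ is still injective with image of order $72$. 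The quotient has order at least $36|W|/2$, which gives index at least $|W|/4=331\,776$.
\end{itemize}
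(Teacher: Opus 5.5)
There are two genuine gaps, both in Steps 1 and 2. Step 3 is fine: computing $|\Ort_\Hdg(L)/\{\pm1\}|=|\Ort_\Hdg(L)|/2$ directly is a valid shortcut for the paper's observation that $-\mathrm{id}\in W\cdot\Aut(X_r)$.

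\textbf{Step 1 cannot produce the $D_4$ components.}
\begin{itemize}
\item The reducible members of the pencil are $F-G$, $F-rG$ and $F-r^2G$ (not $F+r^2G$). Their line components are $18$ elliptic curves with $K\cdot E=1$ and $E^2=-1$.
\item Over $F-\lambda G$, the three components $E_0,E_1,E_2$ cut out by $z_3=\sqrt{\lambda}\,z_4$ satisfy $E_0+E_1+E_2\sim 3K$ and $E_i\cdot E_j=2$. Curves over different members meet once, at a base point.
\item Hence the classes $E-K$ span exactly $A_2(-1)^{\oplus3}$. These copies sit in $V_2$, $V_3$, $V_4$, with characters $\{(1,5),(5,1)\}$, $\{(3,1),(3,5)\}$, $\{(5,3),(1,3)\}$.
\item This configuration of curves is stable under all of $\Aut(X_r)$, so translating by $G$ adds nothing.
\end{itemize}
The $D_4$'s, however, live in $V_5\oplus V_6$ and $V_7\oplus V_8$. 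These are the fixed parts of the involutions $\tau_1^3=\varepsilon_1$ and $\tau_2^3=\varepsilon_2$, and your curves have no component there.

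The missing idea is lattice-theoretic. $L^{\tau_1^3}$ is the fixed lattice of an involution of the unimodular lattice $L$, so its discriminant group is $2$-elementary. It is even, negative definite of rank $4$, and $\tau_1$ acts on it with order $3$ and no nonzero fixed vectors. The Eisenstein-lattice argument then forces $L^{\tau_1^3}\simeq D_4(-1)$, and likewise for $\tau_2^3$. Your curves do recover $L_3\simeq L_4\simeq A_2(-1)$ geometrically, and even a third copy $L_2\simeq A_2(-1)$.

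\textbf{The claim $H\cap W=1$ is false.} Once the $D_4$ components are $L^{\tau_1^3}$ and $L^{\tau_2^3}$, they span $V_5\oplus\dots\oplus V_8$.
\begin{itemize}
\item The element $\tau_1^3\tau_2^3$ acts on $H_k$ by $(-1)^{k_1+k_2}$. So it is $-1$ on $V_5\oplus\dots\oplus V_8$ and trivial on $V_1\oplus\dots\oplus V_4$, which contains $R^\perp\otimes\QQ$.
\item Since $-\mathrm{id}\in W(D_4)$, this element equals $(-1,-1)\in W(D_4)^2\subseteq W$.
\item Your character check on $R^\perp$ therefore fails exactly for this element, and you only get $|WH|=18|W|$.
\end{itemize}
To reach $36|W|$ you must bring in $\iota$. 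It normalizes $W$, since it swaps $L_3\leftrightarrow L_4$ and the two $D_4$'s. No $g\iota$ lies in $W$, because it exchanges two components that every element of $W$ preserves. Hence $\Aut(X_r)\cap W=\{1,\tau_1^3\tau_2^3\}$ and $|W\cdot\Aut(X_r)|=72|W|/2=36|W|$. The agreement of this with your count $|H|\cdot|W|$ is a coincidence.
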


In contrast with the situation for Klein hypersurfaces \cite{GALMVL24}, \Cref{thm:B} shows that the Torelli principle for automorphisms fails for the surfaces $X_r$. The reason is that the primitive cohomology of $X_r$ contains many algebraic classes: $L$ contains a negative definite sublattice of rank $16$ consisting of classes of type $(1,1)$, every isometry of $L$ acting trivially on its orthogonal complement is a Hodge isometry, and few of these isometries are induced by automorphisms. For very general $r$ we show moreover that the Hodge structure on the transcendental lattice determines exactly the quotient $\ZZ/6\ZZ$ of $\Aut(X_r)$ (\cref{prop:kernel}), so that the discrepancy of \cref{thm:B} is entirely supported on the N\'eron--Severi lattice. 

The proof of \cref{thm:B} occupies \cref{sec:griffiths,sec:hodge}. As in \cite{GALMVL24}, we first compute a basis of $\operatorname{H}^2(X_r,\CC)_\prim$ consisting of eigenvectors of the subgroup $G=\ZZ/6\ZZ\times\ZZ/6\ZZ$ of $\Aut(X_r)$ and formed by residues of explicit polynomials, most of them monomials, the analogue of the Griffiths basis of the primitive cohomology of a hypersurface \cite{gr69}. It is obtained from Mavlyutov's description of the Hodge components of a quasi-smooth complete intersection in a toric variety in terms of a Jacobian ring \cite{Mav99}, and we compute the intersection form in this basis using a toric version of the Carlson--Griffiths formula \cite{CG80} due to Villaflor Loyola \cite{VL24} (\cref{thm:gram}). The eigenspace decomposition gives a decomposition of $L\otimes\QQ$ into eight rational pieces, seven of which are of type $(1,1)$, and elementary lattice theory identifies some of the sublattices of $L$ that they cut out: two of them are lattices $A_2(-1)$, and the invariant lattices of two involutions of $G$ are isometric to $D_4(-1)$. The reflections in the roots of these lattices are the Hodge isometries of \cref{thm:B}.

The paper is organized as follows. In \cref{sec:prelim} we recall Catanese's results on canonical models, fix the notation and describe $\Aut(X)$ in terms of the normal form. In \cref{sec:bound} we prove the bound $|\Aut(X)|\leq72$, and in \cref{sec:extremal} we classify the surfaces attaining it and prove \cref{thm:A}. In \cref{sec:griffiths} we compute the Griffiths basis of $X_r$ and the intersection form, and in \cref{sec:hodge} we prove \cref{thm:B}. 

\subsection*{Acknowledgments}
We warmly thank \textsc{Roberto Villaflor Loyola} for useful discussions at various stages of this project.

\subsection*{AI statement}
The mathematical ideas, results and proofs in this paper are entirely the work of the authors, who take full responsibility for them. They arose as a natural continuation of our previous work \cite{GALMVL24}. AI tools (\textsc{Claude}) were used during the preparation of the paper as technical assistants, to write \textsc{Python} scripts verifying the lattice and linear algebra computations, and to improve the exposition and edit the text.

%----------------------------------------------------------------------
\section{Preliminaries}
\label{sec:prelim}
%----------------------------------------------------------------------

The weighted projective space we consider in this paper is
\[\PW=\Proj\CC[x_0,x_1,x_2,z_3,z_4],\]
where the variables have weights $1,2,2,3,3$, respectively. This space is well formed, and its singular locus is the disjoint union of the two lines
\[\Sigma_2=\{x_0=z_3=z_4=0\}\simeq\PP^1,\qquad \Sigma_3=\{x_0=x_1=x_2=0\}\simeq\PP^1,\]
whose points are the points with nontrivial stabilizer, $\mu_2$ and $\mu_3$ respectively, for the action of $\Gm$ on $\AA^5\smallsetminus\{0\}$ with weights $1,2,2,3,3$. We refer to \cite{Dol82,IF00} for generalities on weighted complete intersections. Over the smooth locus of $\PW$ the quotient map from the punctured affine cone is a principal $\Gm$-bundle.

\medskip

Now let $S$ be a minimal surface of general type. Its canonical model is $X=\Proj R(S,K_S)$, where $R(S,K_S)=\bigoplus_{n\geq0}\operatorname{H}^0(S,nK_S)$ is the canonical ring. The surface $X$ is a normal projective surface with at most rational double points, the natural morphism $S\to X$ is the minimal resolution, and $K_X$ is an ample Cartier divisor with $K_X^2=K_S^2$ \cite{Bom73,Rei87}. Since the minimal resolution of $X$ is unique, $\Aut(S)=\Aut(X)$, and every automorphism of $X$ acts on the canonical ring by a graded automorphism.

\begin{theorem}[Catanese]
\label{thm:catanese}
Let $S$ be a minimal surface of general type with $K_S^2=p_g(S)=1$ and let $X$ be its canonical model.
\begin{enumerate}[(1)]
\item $q(S)=0$, $S$ is simply connected and $h^0(nK_S)=\tfrac12n(n-1)+2$ for $n\geq2$.
\item The canonical ring of $S$ is generated by elements $x_0,x_1,x_2,z_3,z_4$ of degrees $1,2,2,3,3$ subject to two relations $f,g$ of degree six. Thus $X=V(f,g)\subset\PW$ is a complete intersection of two sextics, and $X$ is contained in the smooth locus of $\PW$. Conversely, a complete intersection of two sextics contained in the smooth locus of $\PW$ and with at most rational double points is the canonical model of a minimal surface of general type with $K^2=p_g=1$.
\item After a change of the generators, $f$ and $g$ have the normal form
\[f=z_3^2+x_0z_4\,a(y)+F(y),\qquad g=z_4^2+x_0z_3\,b(y)+G(y),\]
where $a,b$ are linear forms and $F,G$ are cubic forms in $y=(y_0,y_1,y_2)=(x_0^2,x_1,x_2)$. The binary cubic forms $C=F(0,y_1,y_2)$ and $D=G(0,y_1,y_2)$ are then nonzero and coprime.
\item Let $X'=V(f',g')$ be the canonical model of a second such surface, with $f',g'$ in normal form. Every isomorphism $X\to X'$ is induced by a graded automorphism of $\CC[x_0,x_1,x_2,z_3,z_4]$ carrying the ideal $(f',g')$ to the ideal $(f,g)$ and of the form
\[x_0\mapsto\lambda x_0,\qquad y_i\mapsto d_{i0}y_0+d_{i1}y_1+d_{i2}y_2\ \ (i=1,2),\qquad z_3\mapsto\mu z_3,\quad z_4\mapsto\nu z_4,\]
or of the same form with the last two maps replaced by $z_3\mapsto\mu z_4$, $z_4\mapsto\nu z_3$.
\end{enumerate}
\end{theorem}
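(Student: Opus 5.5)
Catanese's theorem is quoted from \cite{Cat79,Cat80}, so I would reconstruct its proof in four steps following his argument, using standard surface theory.

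\textbf{Step (1): invariants and plurigenera.} With $K^2=p_g=1$, the irregularity $q$ vanishes. Indeed, if $q>0$, Debarre's inequality $K^2\geq 2p_g$ for irregular surfaces would give $1\geq 2$, which is absurd. Hence $\chi(\cO_S)=2$. Riemann--Roch together with Kodaira/Ramanujam vanishing $h^1(nK)=h^2(nK)=0$ for $n\geq2$ then gives $h^0(nK)=\chi+\tfrac12n(n-1)K^2=\tfrac12n(n-1)+2$. Simple connectivity is the deep part. I would cite Catanese's argument via the canonical curve: the pencil-free analysis of $|K|$ and $|2K|$ shows that any étale cover would contradict the invariants, since an étale cover of degree $d$ has $\chi=2d$ and $K^2=d$, which violates Noether's inequality for $d\geq2$.

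\textbf{Step (2): generators and relations.} First, $|K|$ consists of a single curve $C_0$ with $C_0^2=1$, and $p_a(C_0)=2$. I would restrict the canonical ring to $C_0$. Using $q=0$, the restriction maps $\operatorname{H}^0(nK_S)\to\operatorname{H}^0(C_0,nK_S|_{C_0})$ are surjective, and the kernel is multiplication by $x_0$. So $R(S,K_S)/(x_0)$ is the section ring of the degree-one divisor $K_S|_{C_0}$ on the genus-two curve $C_0$. Note that $K_{C_0}=2K_S|_{C_0}$.

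This ring can be computed by hand. It is generated by two elements in degree $2$ (sections of $K_{C_0}$) and two in degree $3$, with two relations in degree $6$. The dimension count $1,0,2,2,\dots$ plus Hilbert series matching with $\PW$ confirms this. Lifting the generators and relations to $S$ gives the presentation of $R(S,K_S)$ as a complete intersection of two sextics.

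Smooth-locus containment follows by checking two points:
\begin{itemize}
\item $X\cap\Sigma_3=\emptyset$, because $z_3^2,z_4^2$ must appear (otherwise a quadratic form in $z$ vanishes on the $\Sigma_3$ line and $X$ would fail to be Gorenstein/Cartier for $K$);
\item $X\cap\Sigma_2=\emptyset$, which amounts to $C,D$ having no common zero.
\end{itemize}
The converse follows from adjunction: $\omega_X=\cO_X(6+6-11)=\cO_X(1)$, the degree is $K^2=36/36=1$, and the standard computation of $h^0(\cO_X(1))$ gives the rest.

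\textbf{Step (3): normal form.} The degree-$6$ part of the relation ideal involves the quadratic part in $z_3,z_4$. The restriction to $\Sigma_3$ being empty means the two quadrics in $(z_3,z_4)$ have no common zero. After a linear change of $(z_3,z_4)$ and of the relations, they become $z_3^2$ and $z_4^2$. Completing the square with respect to terms $z_3x_0(\cdot)$ in $f$ and $z_4x_0(\cdot)$ in $g$, by substituting $z_3\mapsto z_3+x_0q(y)$, leaves only the cross terms $x_0z_4a$ and $x_0z_3b$. The coprimality of $C$ and $D$ is the statement $X\cap\Sigma_2=\emptyset$.

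\textbf{Step (4): isomorphisms.} Any isomorphism of canonical models lifts to a graded isomorphism of canonical rings. I would then check which graded automorphisms preserve the normal form. Degree-$3$ terms $z_i\mapsto\cdots+x_0\ell(y)+x_0^3c$ must vanish, because otherwise cross terms $x_0z_iy$ of the wrong shape appear, which the normal form forbids. The $(z_3,z_4)$-block must preserve the pair of lines $\{z_3^2,z_4^2\}$ up to the relation span, so it is diagonal or antidiagonal.

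The main obstacle I expect is simple connectivity in Step (1) and the rigidity in Step (4) when $a$ or $b$ vanishes, where additional care is needed with the relation span.
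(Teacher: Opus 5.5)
The paper does not prove this statement; it quotes each part from Catanese. So your reconstruction has to stand on its own, and it breaks at simple connectivity in Step~(1).

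For an \'etale cover $S'\to S$ of degree $d$ you have $K_{S'}^2=d$ and $\chi(\cO_{S'})=2d$, hence $p_g(S')\geq 2d-1$. Noether's inequality then gives only $d\geq 4d-6$, i.e.\ $d\leq 2$. A double cover would have $K^2=2$, $p_g=3$, $q=0$, which lies exactly on the Noether line (Horikawa's double planes branched along an octic), so nothing is contradicted. Moreover, even excluding every finite \'etale cover would only show that $\pi_1(S)$ has no nontrivial finite quotients, not that it is trivial.

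Simple connectivity has to be proved topologically, after (2) is established. If $X$ is smooth, the affine cone over $X$ in $\CC^5$ is a $3$-dimensional complete intersection with an isolated singularity at the origin. Its link is simply connected by the Milnor--Hamm connectivity theorem. Since $X$ lies in the smooth locus of $\PW$, the circle $S^1\subset\Gm$ acts freely on the link with quotient $X$, and the homotopy sequence makes $\pi_1(X)$ a quotient of the trivial group. If $X$ has rational double points, $S$ is diffeomorphic to a smooth member of the connected family of such complete intersections, by simultaneous resolution of rational double points.

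A second, smaller gap is in Step~(2). For $\Sigma_2$ you only rephrase $X\cap\Sigma_2=\varnothing$ as $\gcd(C,D)=1$ and prove neither. That coprimality is the substantive content of (3) and is used throughout the paper. For $\Sigma_3$, the phrase ``fail to be Gorenstein/Cartier'' is not yet an argument.

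One way to close this: $\cO_X(n)=\cO_X(nK_X)$ is invertible for all $n$, and multiplication $\cO_X(m)\otimes\cO_X(n)\to\cO_X(m+n)$ is an isomorphism. Hence every monomial of odd degree (which contains $x_0$, $z_3$ or $z_4$) vanishes at a point of $X\cap\Sigma_2$. Likewise every monomial of degree prime to $3$ vanishes at a point of $X\cap\Sigma_3$. Either case contradicts base point freeness of $|NK_X|$ for $N\gg0$. Equivalently, $|2L|=|K_{C_0}|$ and $|3L|$ must be base point free on $C_0$.

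Note that $C_0$ may be reducible and non-reduced (an irreducible curve plus $(-2)$-curves). For the same reason, the presentation of $R(C_0,L)$ cannot simply be read off from matching Hilbert series; generators and relations have to be checked on such curves too.

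Steps~(3) and (4) are essentially fine. The case $a=0$ or $b=0$ needs no extra care in (4). Once the $(z_3,z_4)$-block is diagonal or antidiagonal, comparing the coefficients of $z_3^2$ and $z_4^2$ forces each pulled-back relation to be a multiple of a single one of $f,g$. The absence of $x_0z_3$ in $f$ and of $x_0z_4$ in $g$ then kills the degree-three corrections, regardless of $a$ and $b$.
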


\begin{proof}
(1) is \cite[Lemma~1.1 and Theorem~2.2]{Cat80}, (2) is \cite[Propositions~1.5, 1.6 and Theorem~1.7]{Cat80}, (3) is \cite[Proposition~1.8 and Lemma~2.1]{Cat80} and (4) is \cite[Theorem~1.7 and Proposition~1.9]{Cat80}. See also \cite{Cat79}.
\end{proof}

Two graded automorphisms of the polynomial ring inducing the same isomorphism $X\to X'$ differ by an element of the torus $\{(t,t^2,t^2,t^3,t^3)\}$, because a graded automorphism of the canonical ring which acts trivially on $X$ acts on each $\operatorname{H}^0(S,nK_S)$ by a scalar. In particular $\Aut(X)$ is the quotient by this torus of the group of graded automorphisms of the polynomial ring preserving the ideal $(f,g)$, and we identify an automorphism of $X$ with a graded automorphism of the polynomial ring inducing it.

We shall use repeatedly the following consequence of the fact that $X$ is a canonical model.

\begin{lemma}
\label{lem:jacobian}
Let $X=V(f,g)$ be as in \cref{thm:catanese} and let $\mathcal C=V(f,g)\subset\AA^5$ be the affine cone over $X$. At every point $p\neq0$ of $\mathcal C$ the Jacobian matrix of $(f,g)$ has rank at least one, and it has rank two if and only if the image of $p$ is a smooth point of $X$.
\end{lemma}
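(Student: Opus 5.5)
The plan is to reduce the statement to the Jacobian criterion for complete intersections, using that $X$ lies in the smooth locus of $\PW$. Let $\pi\colon\AA^5\smallsetminus\{0\}\to\PW$ be the quotient map. By \cref{thm:catanese}(2), $X$ is contained in the smooth locus of $\PW$. Over that locus, $\pi$ is a principal $\Gm$-bundle. Hence $\mathcal C\smallsetminus\{0\}=\pi^{-1}(X)\to X$ is a principal $\Gm$-bundle, in particular smooth of relative dimension one. It follows that a point $p\in\mathcal C\smallsetminus\{0\}$ is a smooth point of $\mathcal C$ exactly when $\pi(p)$ is a smooth point of $X$, and that the singularities of $\mathcal C$ at $p$ are, up to a smooth factor, those of $X$ at $\pi(p)$.

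\textbf{Rank two.} Since $f,g$ form a regular sequence, $\mathcal C$ is a complete intersection of dimension $3$ in $\AA^5$. One caveat is that the Jacobian criterion concerns the scheme $V(f,g)$. Here that scheme is reduced: it is Cohen--Macaulay and generically smooth, because $X$ is normal and hence generically smooth. So the Jacobian criterion applies: $\mathcal C$ is smooth at $p$ if and only if the Jacobian of $(f,g)$ has rank $5-3=2$ at $p$. Combined with the first paragraph, this gives the rank-two equivalence.

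\textbf{Rank at least one.} If both gradients vanished at $p$, then $f$ and $g$ would both lie in $\mathfrak m_p^2$. The Zariski tangent space of $\mathcal C$ at $p$ would then have dimension $5$, so the embedding dimension of $\mathcal C$ at $p$ would be $5$. On the other hand, $X$ has at most rational double points, which are hypersurface singularities of embedding dimension at most $3$. Because $\mathcal C\to X$ is locally a product with $\Gm$ near $p$, the local ring of $\mathcal C$ at $p$ is smooth over the local ring of $X$ at $\pi(p)$ with one extra parameter. Hence its embedding dimension is at most $3+1=4<5$, a contradiction.

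I expect the only delicate point to be the justification that the local structure of $\mathcal C$ at $p$ is that of $X$ at $\pi(p)$ times a line. The properties used are that $\mathcal C\to X$ is smooth, that embedding dimension adds under smooth morphisms, and that smoothness is preserved both ways. All of them follow from the principal bundle structure over the smooth locus of $\PW$, which is why \cref{thm:catanese}(2) is essential.
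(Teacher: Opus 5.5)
Your proposal is correct and follows essentially the paper's argument: the principal $\Gm$-bundle structure over the smooth locus of $\PW$ gives $\dim T_p\mathcal C=\dim T_PX+1$, where $T_p\mathcal C$ is the kernel of the Jacobian, so rank two means $\dim T_PX=2$, and rank zero would force $\dim T_PX=4$, contradicting embedding dimension $3$ for a rational double point. Your detour through reducedness of $V(f,g)$ is harmless but unnecessary, since the Zariski tangent space of the scheme $V(f,g)$ at $p$ is the kernel of the Jacobian in any case, so the comparison of tangent dimensions settles both claims directly.
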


\begin{proof}
Since $X$ lies in the smooth locus of $\PW$, the map $\mathcal C\smallsetminus\{0\}\to X$ is a principal $\Gm$-bundle, so $\dim T_p\mathcal C=\dim T_PX+1$, where $P\in X$ is the image of $p\in \mathcal{C}$. As $T_p\mathcal C$ is the kernel of the Jacobian matrix, this matrix has rank two if and only if $\dim T_PX=2$, and it has rank zero if and only if $\dim T_PX=4$, which is impossible since a rational double point has embedding dimension 3.
\end{proof}

\begin{notation}
\label{not:main}
From now on $X=V(f,g)\subset\PW$ is the canonical model of a minimal surface of general type with $K^2=p_g=1$, with $f,g$ in the normal form of \cref{thm:catanese}, that is,
\[f=z_3^2+x_0z_4\,a(y)+F(y),\qquad g=z_4^2+x_0z_3\,b(y)+G(y),\]
where $y=(y_0,y_1,y_2)=(x_0^2,x_1,x_2)$ are the coordinates of $\PP(2,2,2)\simeq\PP^2$, $a,b$ are linear forms and $F,G$ are cubic forms in $y$. We write
\begin{gather*}F=c_0y_0^3+y_0^2L(y_1,y_2)+y_0Q(y_1,y_2)+C(y_1,y_2),\\ G=d_0y_0^3+y_0^2L'(y_1,y_2)+y_0Q'(y_1,y_2)+D(y_1,y_2),\end{gather*}
with $L,L'$ linear, $Q,Q'$ quadratic and $C,D$ cubic binary forms, and
\[a=a_0y_0+a_1y_1+a_2y_2,\qquad b=b_0y_0+b_1y_1+b_2y_2.\]
For a form $H$ in $y$ and $A\in\GL_2$ we write $H\circ A$ for the form $y\mapsto H(y_0,A(y_1,y_2))$. The pencil spanned by the parts of $f$ and $g$ which are quadratic in $z_3,z_4$ does not depend on the choice of $f,g$. In the normal form its two degenerate members are $z_3^2$ and $z_4^2$.
\end{notation}

\medskip

We now describe the automorphism group of $X$ in terms of the coefficients of the normal form in \cref{not:main}.

\begin{proposition}
\label{prop:autX}
Let $X$ be as in \cref{not:main}. After a further change of coordinates preserving the normal form, $\Aut(X)$ consists exactly of the automorphisms
\[x_0\mapsto x_0,\qquad (y_1,y_2)\mapsto A(y_1,y_2),\qquad z_3\mapsto \mu z_3,\qquad z_4\mapsto\nu z_4\]
with $A\in\GL_2$ and $\mu,\nu\in\Gm$ verifying
\begin{equation}
\label{eq:first}
F\circ A=\mu^2F,\qquad G\circ A=\nu^2G,\qquad \nu\,(a\circ A)=\mu^2a,\qquad \mu\,(b\circ A)=\nu^2b,
\end{equation}
together with the automorphisms obtained from these by replacing the last two maps by $z_3\mapsto\mu z_4$ and $z_4\mapsto\nu z_3$, in which case the conditions are
\begin{equation}
\label{eq:second}
F\circ A=\mu^2G,\qquad G\circ A=\nu^2F,\qquad \nu\,(a\circ A)=\mu^2b,\qquad \mu\,(b\circ A)=\nu^2a.
\end{equation}
\end{proposition}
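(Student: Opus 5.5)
Starting from \cref{thm:catanese}(4) with $X'=X$, every automorphism of $X$ is induced by a graded automorphism $\phi$ with $x_0\mapsto\lambda x_0$, $y_i\mapsto d_{i0}y_0+d_{i1}y_1+d_{i2}y_2$ for $i=1,2$, and either $z_3\mapsto\mu z_3$, $z_4\mapsto\nu z_4$ or $z_3\mapsto\mu z_4$, $z_4\mapsto\nu z_3$. The plan has two steps. First normalize $\lambda=1$, then choose coordinates so that the translation parts $d_{10},d_{20}$ vanish for every automorphism simultaneously.

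The first step is immediate. Composing with the torus element $(t,t^2,t^2,t^3,t^3)$ with $t=\lambda^{-1}$ gives $\lambda=1$ without leaving the class of $\phi$ in $\Aut(X)$. Since $\lambda=1$ is preserved under composition and the torus acts with $t^{-1}$ uniquely up to the elements with $t=1$ on $x_0$, we may identify $\Aut(X)$ with the group $\mathcal G$ of such graded automorphisms with $\lambda=1$ preserving the ideal $(f,g)$. Indeed $t$ is determined, since $x_0\neq 0$ on $X$ generically. The torus then acts trivially only for $t=1$, so $\mathcal G\simeq\Aut(X)$.

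For the second step, consider the homomorphism $\mathcal G\to\GL_3$ recording the action on $\operatorname{H}^0(2K)=\langle y_0,y_1,y_2\rangle$. It is block lower triangular with $y_0\mapsto y_0$, so the span of $y_1,y_2$ modulo $y_0$ is a quotient representation. Because $\mathcal G$ is finite (as $\Aut(X)$ is), Maschke's theorem gives a $\mathcal G$-invariant complement to $\CC y_0$. Replacing $y_1,y_2$ by a basis $y_i'=y_i+e_iy_0$ of this complement is a change of coordinates, and in the new coordinates every element of $\mathcal G$ acts by $(y_1,y_2)\mapsto A(y_1,y_2)$ with no $y_0$ term. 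We must check that this change preserves the normal form. The substitution $x_i\mapsto x_i+e_ix_0^2$ keeps $z_3^2+x_0z_4a+F$ of the same shape, with $a$ linear and $F$ cubic in the new $y$. This is the main point to verify, and it is easy because $z_3,z_4$ are untouched. So the normal form, and the coprimality of $C,D$, persist; the coprimality follows from \cref{thm:catanese}(3) for the new normal form.

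Finally we translate "$\phi$ preserves $(f,g)$" into \eqref{eq:first} and \eqref{eq:second}. In the diagonal case, $\phi(f)=\mu^2z_3^2+\nu x_0z_4(a\circ A)+F\circ A$ and $\phi(g)=\nu^2z_4^2+\mu x_0z_3(b\circ A)+G\circ A$. Since $\phi(f),\phi(g)$ lie in the degree-six part of the ideal, which is $\CC f\oplus\CC g$ (there are no lower-degree elements), we have $\phi(f)=\alpha f+\beta g$. Comparing the $z_4^2$ coefficients gives $\beta=0$, and comparing $z_3^2$ gives $\alpha=\mu^2$. Comparing the remaining terms gives $\nu(a\circ A)=\mu^2a$ and $F\circ A=\mu^2F$, and symmetrically for $g$. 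The swapped case is identical, with $\phi(f)=\mu^2 g$ and $\phi(g)=\nu^2 f$, which yields \eqref{eq:second}. Conversely, any $(A,\mu,\nu)$ satisfying these equations maps $f,g$ to multiples of $f,g$ (or of $g,f$) and hence defines an automorphism. The only subtle point is the choice of coordinates in the Maschke step, which uses finiteness of $\Aut(X)$.
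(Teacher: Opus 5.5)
Your proof is correct and follows the same route as the paper. Both arguments start from \cref{thm:catanese}(4), kill the $y_0$-components of the action on $\langle y_0,y_1,y_2\rangle$ by a coordinate change $x_i\mapsto x_i+e_ix_0^2$ that preserves the normal form, and compare the coefficients of $z_3^2$ and $z_4^2$ in $\varphi^*f,\varphi^*g$.

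The only difference is the order of the normalizations. You first normalize $\lambda=1$ and then apply Maschke's theorem to the resulting finite group. The paper instead applies Mostow's conjugacy theorem for Levi factors to the reductive group of all graded automorphisms preserving $(f,g)$, and normalizes $\lambda$ afterwards. Your version is slightly more elementary.
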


\begin{definition}
\label{def:kinds}
We call the automorphisms of the first list in \cref{prop:autX} \emph{of the first kind} and the others \emph{of the second kind}. The elements of the first kind are the automorphisms preserving each of the two degenerate members $z_3^2$, $z_4^2$ of the pencil of \cref{not:main}, so they form a subgroup $\Aut_1(X)\subseteq\Aut(X)$ of index at most two which does not depend on the choice of the normal form.
\end{definition}

\begin{proof}[Proof of \cref{prop:autX}]
By \cref{thm:catanese}(4), every automorphism of $X$ is induced by a graded automorphism of the form $x_0\mapsto\lambda x_0$, $y\mapsto My$, with $M\in\GL_3$ stabilizing the line $\langle y_0\rangle$, together with a diagonal or antidiagonal rescaling of $z_3,z_4$. The image of $\Aut(X)$ in $\GL_3$ lies in the parabolic subgroup $P$ of matrices stabilizing $\langle y_0\rangle$, and $P=(\Gm\times\GL_2)\ltimes U_1$, where $U_1$ is the unipotent radical, consisting of the coordinate changes $y_0\mapsto y_0$, $y_i\mapsto y_i+c_iy_0$. The group of graded automorphisms of the polynomial ring preserving $(f,g)$ is an extension of the finite group $\Aut(X)$ by the torus $\{(t,t^2,t^2,t^3,t^3)\}$, so it is reductive, and its image in $P$ is conjugate to a subgroup of the Levi factor $\Gm\times\GL_2$ by an element $u\in U_1$ (see, e.g., \cite{Mos56}). The element $u$ is induced by the coordinate change $x_i\mapsto x_i+c_ix_0^2$, which preserves the normal form: it replaces $a,b,F,G$ by $a\circ u,b\circ u,F\circ u,G\circ u$ and leaves the terms $z_3^2$, $z_4^2$, $x_0z_4$ and $x_0z_3$ unchanged. After this coordinate change every element of $\Aut(X)$ acts on $y$ by a matrix $\diag(\lambda^2,A)$ with $A\in\GL_2$. The torus acts on the parameters by $\lambda\mapsto t\lambda$, $A\mapsto t^2A$, $\mu\mapsto t^3\mu$, $\nu\mapsto t^3\nu$, so each automorphism has a unique representative with $\lambda=1$.

For an automorphism $\varphi$ of the first kind with $\lambda=1$ a direct computation gives 
\[\varphi^*f=\mu^2z_3^2+\nu x_0z_4\,(a\circ A)+F\circ A,\qquad \varphi^*g=\nu^2z_4^2+\mu x_0z_3\,(b\circ A)+G\circ A,\]
and $\varphi$ preserves the ideal $(f,g)$ if and only if $\varphi^*f=\mu^2f$ and $\varphi^*g=\nu^2g$, which is \eqref{eq:first}. Indeed, since $f$ and $g$ span the degree six part of the ideal, $\varphi$ preserves $(f,g)$ if and only if $\varphi^*f$ and $\varphi^*g$ are linear combinations of $f$ and $g$, and comparing the coefficients of $z_3^2$ and $z_4^2$ shows that these combinations are $\mu^2f$ and $\nu^2g$. The same argument applies to the second kind. For the second kind one gets $\varphi^*f=\mu^2z_4^2+\nu x_0z_3(a\circ A)+F\circ A$ and $\varphi^*g=\nu^2z_3^2+\mu x_0z_4(b\circ A)+G\circ A$, so the condition is $\varphi^*f=\mu^2g$, $\varphi^*g=\nu^2f$, which is \eqref{eq:second}.
\end{proof}

Sending an automorphism to its matrix $A$ defines a homomorphism $\Aut(X)\to\GL_2$. We call $B$ its image and $V$ its kernel. Composing with $\GL_2\to\PGL_2$ we obtain a homomorphism $\Aut(X)\to\PGL_2$ with image $T$, so that we have the exact sequences
\begin{equation}
\label{eq:seq}
1\to V\to \Aut(X)\to B\to1,\qquad 1\to B\cap\Gm\to B\to T\to 1,
\end{equation}
where $\Gm\subset\GL_2$ denotes the scalar matrices, and $|\Aut(X)|=|V|\cdot|B\cap\Gm|\cdot|T|$. The group $T$ acts on the line $\PP^1$ with homogeneous coordinates $(y_1:y_2)$, and we write $T_1\subset T$ for the image of the subgroup $\Aut_1(X)$ of elements of the first kind.

In view of \eqref{eq:seq}, we bound $|\Aut(X)|$ by bounding separately the orders of $V$, $B\cap\Gm$ and $T$. The following lemma computes $V$. It also shows that the scalar matrices in $B$ come only from elements of the first kind, a fact used in \cref{lem:scalars} to bound $|B\cap\Gm|$.

\begin{lemma}
\label{lem:V}
No element of the second kind has $A$ scalar. The group $V$ consists of the elements of the first kind with $A=\Id$, and
\[V=\begin{cases}1&\text{if }a\neq0\text{ and }b\neq0,\\ \ZZ/2\ZZ&\text{if exactly one of }a,b\text{ is zero},\\ (\ZZ/2\ZZ)^2&\text{if }a=b=0.\end{cases}\]
\end{lemma}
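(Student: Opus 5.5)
Everything follows from the equations of \cref{prop:autX} together with the coprimality of $C$ and $D$ from \cref{thm:catanese}(3).

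\emph{Second-kind elements cannot have $A$ scalar.} Suppose $\varphi$ is of the second kind with $A=s\Id$. Restrict the first equation of \eqref{eq:second} to $y_0=0$. The binary cubic part transforms as $C\circ A=s^3C$, so the equation gives $s^3C=\mu^2D$. Since $C$ and $D$ are nonzero and coprime cubics, they cannot be proportional, which is a contradiction.

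\emph{Elements of $V$ are of the first kind.} Every element of $V$ has $A=\Id$, which is scalar, so by the previous step it is of the first kind. Conversely, any first-kind element with $A=\Id$ lies in $V$ by definition.

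\emph{Computing $V$.} Put $A=\Id$ in \eqref{eq:first}. The conditions become
\[F=\mu^2F,\quad G=\nu^2G,\quad \nu a=\mu^2a,\quad \mu b=\nu^2 b.\]
Since $F\neq0$ and $G\neq0$ (their restrictions $C,D$ are nonzero), we get $\mu^2=\nu^2=1$, so $\mu,\nu\in\{\pm1\}$. The remaining conditions reduce as follows.
\begin{itemize}
\item If $a\neq0$, the third equation gives $\nu=\mu^2=1$.
\item If $b\neq0$, the fourth equation gives $\mu=\nu^2=1$.
\item If $a=0$ (resp. $b=0$), the corresponding equation imposes nothing.
\end{itemize}

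The normalization $\lambda=1$ fixed in the proof of \cref{prop:autX} picks a unique representative modulo the torus. The only residual ambiguity is $t=-1$, but that changes $\lambda$, so distinct pairs $(\mu,\nu)$ give distinct automorphisms. Composition multiplies the pairs $(\mu,\nu)$, so $V$ is a subgroup of $\{\pm1\}^2$.

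Reading off the cases:
\begin{itemize}
\item If $a\neq0$ and $b\neq0$, then $\mu=\nu=1$ and $V=1$.
\item If $a=0$ and $b\neq0$, then $\mu=1$ and $\nu=\pm1$, so $V\simeq\ZZ/2\ZZ$; the case $a\neq0$, $b=0$ is symmetric.
\item If $a=b=0$, all four pairs occur and $V\simeq(\ZZ/2\ZZ)^2$.
\end{itemize}

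The one point needing care is that these are genuinely distinct nontrivial automorphisms rather than torus elements. This holds because the torus elements with $\lambda=1$ are trivial, so no identification remains.
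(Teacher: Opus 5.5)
Your proposal is correct and follows essentially the same route as the paper. You exclude a scalar matrix for elements of the second kind via $s^3C=\mu^2D$ and the coprimality of $C$ and $D$, then read off $V$ from \eqref{eq:first} with $A=\Id$. Your extra remark that distinct pairs $(\mu,\nu)$ give distinct automorphisms, because the representative with $\lambda=1$ is unique modulo the torus, is a point the paper leaves implicit in the proof of \cref{prop:autX}.
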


\begin{proof}
An element of the second kind with $A=t\Id$ satisfies $F\circ A=\mu^2G$, hence $t^3C=\mu^2D$, contradicting the fact that $C$ and $D$ are nonzero and coprime. So $V$ consists of elements of the first kind with $A=\Id$, for which \eqref{eq:first} reads $F=\mu^2F$, $G=\nu^2G$, $\nu a=\mu^2a$ and $\mu b=\nu^2b$. Since $F,G\neq0$ we get $\mu^2=\nu^2=1$. If $a\neq0$ the third condition gives $\nu=1$, and if $b\neq0$ the fourth gives $\mu=1$. Conversely $(\mu,\nu)=(-1,1)$ is admissible if and only if $b=0$, and $(\mu,\nu)=(1,-1)$ if and only if $a=0$.
\end{proof}

%----------------------------------------------------------------------
\section{Bounds on the order of the automorphism group}
\label{sec:bound}
%----------------------------------------------------------------------

We keep the notation of the previous sections. By \cref{lem:V}, the group $V$ has order at most four, with equality if and only if $a=b=0$. In this section we bound the two remaining factors of $|\Aut(X)|=|V|\cdot|B\cap\Gm|\cdot|T|$. We first show that $|B\cap\Gm|\leq3$ in \cref{lem:scalars}, and then we bound $|T|$, treating separately the case in which $C$ and $D$ are both cubes of linear forms.

The point $(1:0:0:0:0)$ of $\PW$ lies on $X$ exactly when $c_0=d_0=0$. The following lemma, which will be used repeatedly in this section, is the condition that $X$ has at most a rational double point there.

\begin{lemma}
\label{lem:pointQ}
Write $L=L^{(1)}y_1+L^{(2)}y_2$ and $L'=L'^{(1)}y_1+L'^{(2)}y_2$. If $c_0=d_0=0$, then $(L,L',a_0,b_0)\neq(0,0,0,0)$.
\end{lemma}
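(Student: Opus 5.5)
We argue by contradiction: suppose $c_0=d_0=0$ and $L=L'=0$, $a_0=b_0=0$. We will show that the point $P=(1:0:0:0:0)$, which lies on $X$ because $c_0=d_0=0$, is then not a rational double point of $X$. The natural tool is \cref{lem:jacobian} together with a local analysis at $P$. Since $x_0\neq0$ at $P$, the chart $x_0=1$ of $\PW$ is an affine space with coordinates $x_1,x_2,z_3,z_4$. This is the quotient of $\AA^4$ by the trivial stabilizer at this point: $P$ lies in the smooth locus, and the weight-one coordinate makes the chart honest. In this chart $X$ is the complete intersection $\{f(1,x_1,x_2,z_3,z_4)=g(1,x_1,x_2,z_3,z_4)=0\}\subset\AA^4$, with $y_0=1$.

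We then expand $f$ and $g$ at the origin of this chart, using \cref{not:main}. Setting $y_0=1$, the quantity $F$ becomes $c_0+L(x_1,x_2)+Q(x_1,x_2)+C(x_1,x_2)$, and $x_0z_4a(y)$ becomes $z_4(a_0+a_1x_1+a_2x_2)$; similarly for $g$. Under our hypotheses, the constant and linear terms of $f$ are $c_0+L+a_0z_4=0$, and likewise for $g$. So both $f$ and $g$ lie in $\mathfrak m^2$, where $\mathfrak m$ is the maximal ideal of the origin. This is exactly the statement that the Jacobian matrix of $(f,g)$ vanishes at the point $p=(1,0,0,0,0)$ of the affine cone.

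We conclude by invoking \cref{lem:jacobian}, which says the Jacobian has rank at least one at every point $p\neq0$ of the cone. This is a contradiction. Equivalently, the Zariski tangent space of $X$ at $P$ would be $4$-dimensional, while a rational double point has embedding dimension $3$. Hence $(L,L',a_0,b_0)\neq(0,0,0,0)$.

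We do not expect a real obstacle. The only point needing care is the bookkeeping that the $x_0$-derivatives vanish as well on the cone, so that the full $2\times5$ Jacobian is zero. This follows from the Euler relation $\sum w_i x_i\partial_i f=6f$: at $p$ it gives $\partial_{x_0}f(p)=6f(p)=0$, and similarly for $g$. Alternatively, one can differentiate directly: every term of $f$ involving $x_0$ also involves some other variable to positive power, or is $c_0x_0^6$, $x_0^4L$ or $x_0z_4a_0x_0^2$, all of whose derivatives vanish at $p$ under the hypotheses.
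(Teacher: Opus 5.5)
Your proof is correct and follows the paper's own argument. The paper writes down the gradients $(0,L^{(1)},L^{(2)},0,a_0)$ and $(0,L'^{(1)},L'^{(2)},b_0,0)$ of $f$ and $g$ at $p=(1,0,0,0,0)$ and applies \cref{lem:jacobian}; your computation in the chart $x_0=1$, with the Euler relation handling $\partial_{x_0}$, verifies exactly the same thing.
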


\begin{proof}
If $c_0=d_0=0$ then $p=(1,0,0,0,0)$ lies on the affine cone, and the gradients of $f$ and $g$ at $p$ are $(0,L^{(1)},L^{(2)},0,a_0)$ and $(0,L'^{(1)},L'^{(2)},b_0,0)$. By \cref{lem:jacobian} they do not both vanish.
\end{proof}

We now bound the second factor $|B\cap\Gm|$. When the bound is attained, the lemma also shows that the cubic forms $F$ and $G$ reduce to $c_0y_0^3+C$ and $d_0y_0^3+D$, which will be the starting point of the classification in \cref{sec:extremal}.

\begin{lemma}
\label{lem:scalars}
The group $B\cap\Gm$ has order at most $3$. If $|B\cap\Gm|=3$ then
\[L=L'=Q=Q'=0,\qquad a=a_0y_0,\qquad b=b_0y_0,\]
so that $F=c_0y_0^3+C$ and $G=d_0y_0^3+D$.
\end{lemma}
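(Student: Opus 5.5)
By \cref{lem:V}, every element of $\Aut(X)$ whose matrix $A$ is scalar is of the first kind. So I would take an element of $B\cap\Gm$, choose a preimage with $A=t\Id$, $t\in\Gm$, and study the conditions \eqref{eq:first} for such a triple $(t\Id,\mu,\nu)$. With $\lambda=1$ the form $H\circ A$ multiplies the part $y_0^{3-k}H_k(y_1,y_2)$ of a cubic $H$ by $t^k$. The conditions $F\circ A=\mu^2F$ and $G\circ A=\nu^2G$ therefore say that each nonzero graded piece of $F$ and $G$ is an eigenvector, with eigenvalue $\mu^2$ (resp.\ $\nu^2$).

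\emph{Bound on $|B\cap\Gm|$.} Since $C,D\neq0$ by \cref{thm:catanese}(3), we get $t^3=\mu^2$ and $t^3=\nu^2$. Hence $\mu=\pm\nu$. I would then use the condition at the point $(1:0:0:0:0)$.

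\emph{Case $c_0\neq0$ or $d_0\neq0$.} Suppose $c_0\neq0$. Comparing the $y_0^3$ coefficients gives $\mu^2=1$, hence $t^3=1$. The case $d_0\neq0$ is symmetric and gives $\nu^2=1$, hence again $t^3=1$.

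\emph{Case $c_0=d_0=0$.} By \cref{lem:pointQ}, one of $L,L',a_0,b_0$ is nonzero.
\begin{itemize}
\item If $L\neq0$, then $t=\mu^2=t^3$, so $t^2=1$.
\item If $L'\neq0$, the same computation gives $t^2=1$.
\item If $a_0\neq0$, the condition $\nu(a\circ A)=\mu^2a$ on the $y_0$ coefficient gives $\nu=\mu^2=t^3$. Combined with $\nu^2=t^3$, this yields $\nu=1$ and hence $t^3=1$.
\item If $b_0\neq0$, the symmetric argument gives $t^3=1$.
\end{itemize}

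So in all cases $t^3=1$ or $t^2=1$, and $t$ lies in $\mu_6$. The remaining task is to exclude $t=-1$ and the primitive sixth roots of unity, and to show the group cannot be all of $\mu_6$. Any subgroup of $\mu_6$ of order greater than $3$ is $\mu_6$ itself, so it suffices to exclude $-1\in B\cap\Gm$.

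Suppose $t=-1$. Then $\mu^2=\nu^2=-1$. In particular $\mu^2\neq1$, so $c_0=d_0=0$, and also $\mu^2\neq t$, so $L=L'=0$.
\begin{itemize}
\item If $a_0\neq0$, then $\nu=\mu^2=-1$, which contradicts $\nu^2=-1$.
\item If $b_0\neq0$, then $\mu=\nu^2=-1$, which again contradicts $\mu^2=-1$.
\end{itemize}
Hence \cref{lem:pointQ} is violated, and $|B\cap\Gm|\leq3$.

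\emph{The case of equality.} Suppose $|B\cap\Gm|=3$, and take $t=\omega$ a primitive cube root of unity. Then $\mu^2=\nu^2=1$.
\begin{itemize}
\item The $y_0^2L$ term has eigenvalue $\omega\neq1$, so $L=L'=0$.
\item The $y_0Q$ term has eigenvalue $\omega^2\neq1$, so $Q=Q'=0$.
\item For $a$, the parts $a_1y_1+a_2y_2$ scale by $\omega$, while the required eigenvalue $\mu^2/\nu=\pm1$ is not $\omega$. So $a=a_0y_0$, and similarly $b=b_0y_0$.
\end{itemize}
This gives $F=c_0y_0^3+C$ and $G=d_0y_0^3+D$, as claimed.

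The main obstacle I expect is the bookkeeping of signs in $\mu,\nu$ when excluding $t=-1$. I would handle it as above, using \cref{lem:pointQ} as the decisive input.
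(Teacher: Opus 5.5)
Your case analysis up to ``$t^2=1$ or $t^3=1$'' and your treatment of the equality case match the paper's proof. The paragraph excluding $t=-1$, however, contains a false step.

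If $t=-1$ then $\mu^2=\nu^2=t^3=-1=t$. So the conditions $tL=\mu^2L$ and $tL'=\nu^2L'$ are automatically satisfied, and your assertion ``$\mu^2\neq t$, so $L=L'=0$'' is wrong. For $t=-1$ the conditions \eqref{eq:first} force only $c_0=d_0=0$, $Q=Q'=0$ and $a=b=0$, and they leave $L,L'$ free. \cref{lem:pointQ} then merely gives $(L,L')\neq(0,0)$, which is no contradiction.

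In fact $-1\in B\cap\Gm$ does occur. Take $f=z_3^2+x_0^4x_1+x_1^3+x_2^3$ and $g=z_4^2+x_0^4x_2+x_1^3+2x_2^3$; one checks that this complete intersection has at most rational double points and lies in the smooth locus of $\PW$, so it is a canonical model. The map $x_i\mapsto-x_i$ $(i=1,2)$, $z_j\mapsto\sqrt{-1}\,z_j$ multiplies both $f$ and $g$ by $-1$. This is precisely the case $|B\cap\Gm|=2$ treated in \cref{lem:case2}.

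The repair uses only what you had already proved. $B\cap\Gm$ is a finite, hence cyclic, subgroup of $\Gm$, and each of its elements satisfies $t^2=1$ or $t^3=1$. So it contains no primitive sixth root of unity, and it must be $\mu_1$, $\mu_2$ or $\mu_3$. This is exactly how the paper concludes. Replace your reduction to ``excluding $-1$'' with this observation and the proof is complete.
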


\begin{proof}
By \cref{lem:V} an element with $A=t\Id \in B\cap \Gm$ is of the first kind, so \eqref{eq:first} gives
\begin{gather*}c_0=\mu^2c_0,\quad tL=\mu^2L,\quad t^2Q=\mu^2Q,\quad t^3C=\mu^2C,\quad \nu a_0=\mu^2a_0,\quad\nu ta_i=\mu^2a_i\ (i=1,2),\\
d_0=\nu^2d_0,\quad tL'=\nu^2L',\quad t^2Q'=\nu^2Q',\quad t^3D=\nu^2D,\quad \mu b_0=\nu^2b_0,\quad\mu tb_i=\nu^2b_i\ (i=1,2).\end{gather*}
Since $C\neq0$ and $D\neq0$ we get $\mu^2=\nu^2=t^3$. If $c_0\neq0$ or $d_0\neq0$, then $t^3=1$. If $c_0=d_0=0$, then by \cref{lem:pointQ} either $L\neq0$ or $L'\neq0$, in which case $t=t^3$ and $t^2=1$, or $L=L'=0$ and $a_0\neq0$ or $b_0\neq0$. If $a_0\neq0$ then $\nu=\mu^2=t^3$ and $\nu^2=t^3$ give $t^3=1$, and the same holds if $b_0\neq0$. In all cases $t^2=1$ or $t^3=1$, so $|B\cap\Gm|\leq3$.

Suppose $|B\cap\Gm|=3$ and let $t$ be a primitive cube root of unity in it. Then $\mu^2=\nu^2=1$, and from $tL=L$, $t^2Q=Q$, $\nu ta_i=a_i$ and $\mu tb_i=b_i$ we get $L=Q=0$, $a_1=a_2=0$ and $b_1=b_2=0$, and symmetrically $L'=Q'=0$.
\end{proof}

\begin{definition}
\label{def:degenerate}
We say that $X$ is of \emph{degenerate type} if both $C$ and $D$ are cubes of linear forms.
\end{definition}

We now turn to the third factor $|T|$. The group $T$ acts on $\PP^1$ preserving the union of the sets of roots of $C$ and $D$, which consists of at most six points. Outside the degenerate type these points are enough to bound $|T|$, as shown in the following lemma, which also describes the configuration of the roots when the bound on $|T_1|$ is attained. In the degenerate type the roots of $C$ and $D$ are only two points and give no bound. This case is treated at the end of this section by means of a lattice of characters, and it is the source of the automorphisms of large prime order of \cref{ex:primes}. Throughout, $\zeta_3$ denotes a primitive cube root of unity.

\begin{lemma}
\label{lem:Tbound}
Assume that $X$ is not of degenerate type. Then $T_1$ has index at most two in $T$ and $|T_1|\leq6$, with $|T_1|\leq3$ if $T_1$ is cyclic. If $|T_1|=6$ then $C$ and $D$ have three distinct roots each and, after a projective change of the coordinate $\lambda=y_1/y_2$ on $\PP^1$, their sets of roots are the two orbits of size three of the dihedral group $\Delta$ of order six generated by $\lambda\mapsto\zeta_3\lambda$ and $\lambda\mapsto\lambda^{-1}$.
\end{lemma}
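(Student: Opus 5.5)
The plan is to let $T$ act on the root sets of $C$ and $D$ in $\PP^1$. Let $R_C$ and $R_D$ be these root sets. They are disjoint because $C$ and $D$ are coprime by \cref{thm:catanese}(3).

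By \eqref{eq:first}, an element of the first kind satisfies $C\circ A=\mu^2C$ and $D\circ A=\nu^2D$. Its image in $\PGL_2$ therefore preserves $R_C$ and $R_D$ separately, together with the multiplicities of the roots. By \eqref{eq:second}, an element of the second kind satisfies $C\circ A=\mu^2D$ and $D\circ A=\nu^2C$, so it exchanges $R_C$ and $R_D$. Since $T_1$ is the image of the index-at-most-two subgroup $\Aut_1(X)$, its index in $T$ is at most two.

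Next we bound $T_1$. Since $X$ is not of degenerate type, one of $C,D$, say $C$, is not a cube, so $|R_C|\geq2$. Suppose first that $|R_C|=3$, or that $|R_C|=2$ and $|R_D|\geq2$. Then $R_C\cup R_D$ contains at least three points. The group $T_1$ acts on $R_C\times R_D$ compatibly, so we get a homomorphism $T_1\to\mathrm{Sym}(R_C)\times\mathrm{Sym}(R_D)$ preserving multiplicities. It is injective because a Möbius transformation fixing three points is the identity. When $|R_C|=2$, the root of multiplicity two and the simple root are distinguished, so both are fixed. In that case $T_1$ embeds into $\mathrm{Sym}(R_D)$ with these two extra fixed points, and so $T_1$ is trivial. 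The same argument applies if $|R_D|=2$.

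The remaining cases are $|R_C|=3$, or $|R_C|=2$ with $|R_D|=1$. In the second case $C$ and $D$ give three points with distinct multiplicity patterns, so again $T_1=1$. If $|R_C|=3$, the group $T_1$ embeds into $S_3$, so $|T_1|\leq6$. If moreover $|R_D|\leq2$, then some point of $R_D$ is fixed: it is either the single point of $R_D$ or the point of $R_D$ with distinguished multiplicity. The stabilizer in $\PGL_2$ of a point that also acts on three other points is a subgroup of $S_3$ fixing an extra point. An involution of $S_3$ fixes one point of $R_C$ and, being of order two in $\PGL_2$, fixes exactly two points, so it can fix a point of $R_D$ as well. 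A $3$-cycle, on the other hand, has two fixed points not in $R_C$. Hence the subgroups that can fix a point of $R_D$ are cyclic of order $\leq3$; in any case $|T_1|\leq6$.

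For the cyclic claim, a cyclic subgroup of $S_3$ has order at most three, and $T_1$ embeds into $S_3$ whenever it is nontrivial by the analysis above. The only case needing care is when $T_1$ is embedded only in $\mathrm{Sym}(R_D)$ with $|R_D|=3$, which is the symmetric situation and is handled the same way.

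If $|T_1|=6$, then $T_1\simeq S_3$ acts on $R_C$ as the full symmetric group. Finite subgroups of $\PGL_2$ isomorphic to $S_3$ are conjugate to $\Delta$, so we may change coordinates to make $T_1=\Delta$. Then $R_C$ is an orbit of size three of $\Delta$. The orbits of $\Delta$ have size $6$, $3$ or $2$: the size-two orbit is $\{0,\infty\}$, and the size-three orbits are $\{1,\zeta_3,\zeta_3^2\}$ and $\{-1,-\zeta_3,-\zeta_3^2\}$. The set $R_D$ is a $\Delta$-invariant set of at most three points disjoint from $R_C$. If $R_D$ consisted of simple and double roots, $\Delta$ would have to fix the double root, which is impossible. $R_D$ cannot be $\{0,\infty\}$ either, since it would need a distinguished double point there. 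If $D$ were a cube, $R_D$ would be a single fixed point, but $\Delta$ has none. Hence $R_D$ is the other orbit of size three, and $C$ and $D$ have three distinct roots each.

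The main obstacle is the careful bookkeeping of multiplicities in the small cases, particularly ensuring that $T_1$ embeds into $S_3$ with the cyclic bound $3$. I would organize the argument by the multiplicity partitions of $C$ and $D$.
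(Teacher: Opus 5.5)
Your route is the paper's: let $T_1$ act on the roots of $C$ and $D$ with their multiplicities, embed it into $\mathfrak{S}_3$, use fixed points when a form has a repeated root, and for $|T_1|=6$ conjugate to $\Delta$ and read off its orbits. The index-two statement and your final $\Delta$-orbit analysis are correct. However, one step is false. For $|R_C|=2$ and $|R_D|\geq2$ you conclude that $T_1$ is trivial. That holds for $|R_D|=2$, where there are four fixed points. It fails for $|R_D|=3$: a nontrivial element fixing the two roots of $C$ is a rotation about them with no other fixed point, and it can permute the three roots of $D$ cyclically.

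Concretely, consider $z_3^2+x_2(x_0^4+x_1^2)=z_4^2+x_0^6+x_1^3-x_2^3=0$. It lies in the smooth locus of $\PW$, has only rational double points (one $A_3$, four $A_1$), and is in normal form with $C=y_1^2y_2$, $D=y_1^3-y_2^3$. The map $x_2\mapsto\zeta_3x_2$, $z_3\mapsto\zeta_3^2z_3$ is an automorphism of the first kind whose matrix $A=\diag(1,\zeta_3)$ is not scalar, so $|T_1|\geq3$. Your later remark about the ``symmetric situation'' tacitly admits this case, which contradicts the triviality claim. Your last paragraph also silently relies on that claim when it takes $|R_C|=3$ for granted once $|T_1|=6$.

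The repair is the paper's argument. If $C=\ell^2m$, then $T_1$ fixes the two points cut out by $\ell$ and $m$, so it is cyclic and acts freely on their complement. That complement contains the at most three roots of $D$, so $|T_1|\leq3$, and in particular $|T_1|\neq6$ in this case.

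The same principle is what your paragraph on $|R_C|=3$, $|R_D|\leq2$ actually needs: a finite subgroup of $\PGL_2$ with a fixed point on $\PP^1$ is cyclic, or equivalently $\Delta$ has no fixed point. The facts you list there show only that a single involution or a single $3$-cycle can fix a point outside $R_C$. They do not by themselves imply that a subgroup fixing a point of $R_D$ is cyclic. With these two repairs your proof coincides with the paper's.
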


\begin{proof}
An element of the first kind satisfies $C\circ A=\mu^2C$ and $D\circ A=\nu^2D$, so its image in $\PGL_2$ preserves the set of roots of $C$ and the set of roots of $D$, which are disjoint because $\gcd(C,D)=1$. An element of the second kind exchanges the two sets, so $T_1$ has index at most two in $T$.

If $C$ has three distinct roots, then $T_1$ acts faithfully on them, so $T_1$ embeds into the symmetric group on three letters $\mathfrak{S}_3$. In particular $|T_1|\leq6$, and $|T_1|\leq3$ if $T_1$ is cyclic. If $C=\ell^2m$ with $\ell,m$ independent, then $T_1$ fixes the two points cut out by $\ell$ and $m$, hence is cyclic and acts freely on the complement of these two points. The roots of $D$ avoid them and are at most three, so $|T_1|\leq3$. If $C=\ell^3$, then $T_1$ fixes the point cut out by $\ell$ and is cyclic. Since $X$ is not of degenerate type, either $D=m^2n$ with $m,n$ independent, in which case $T_1$ fixes three points and is trivial, or $D$ has three distinct roots, which $T_1$ permutes in orbits that are free away from its second fixed point, so $|T_1|\leq3$. The case in which $D$ has fewer than three distinct roots and $C$ has three is symmetric.

Finally, $|T_1|=6$ forces $C$ and $D$ to have three distinct roots and $T_1$ to be the full symmetric group on the roots of $C$. A subgroup of $\PGL_2$ isomorphic to the symmetric group on three letters is conjugate to $\Delta$, whose orbits on $\PP^1$ are $\{0,\infty\}$, the two orbits $\{1,\zeta_3,\zeta_3^2\}$ and $\{-1,-\zeta_3,-\zeta_3^2\}$, and orbits of size six. The roots of $C$ and of $D$ form two disjoint $\Delta$-stable sets of size three, so they are these two orbits.
\end{proof}

By \cref{lem:Tbound} we have $|T|\leq12$ outside the degenerate type. This bound can be improved when $B\cap\Gm$ is nontrivial, and the next two lemmas treat the cases $|B\cap\Gm|=3$ and $|B\cap\Gm|=2$. In the first one we assume moreover that $a=b=0$, which by \cref{lem:V} is the case in which $|V|$ is maximal.

\begin{lemma}
\label{lem:case3}
Assume that $X$ is not of degenerate type, that $|B\cap\Gm|=3$ and that $a=b=0$. Then $c_0$ and $d_0$ are not both zero and $|T|\leq6$.
\end{lemma}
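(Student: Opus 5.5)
The first assertion should follow directly from \cref{lem:scalars} and \cref{lem:pointQ}. For the bound, I plan to reduce to the single configuration $|T_1|=6$ with $T_1\neq T$, put $C$ and $D$ in the explicit form given by \cref{lem:Tbound}, and then obtain a contradiction from one involution of the first kind together with the existence of an element of the second kind.

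\emph{First assertion.} Since $|B\cap\Gm|=3$, \cref{lem:scalars} gives $L=L'=0$, $a=a_0y_0$ and $b=b_0y_0$. The hypothesis $a=b=0$ then gives $a_0=b_0=0$. So $(L,L',a_0,b_0)=(0,0,0,0)$, and \cref{lem:pointQ} forbids $c_0=d_0=0$. In addition, \cref{lem:scalars} gives $F=c_0y_0^3+C$ and $G=d_0y_0^3+D$, which we use below.

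\emph{Reduction to one configuration.} By \cref{lem:Tbound}, $T_1$ has index at most two in $T$ and embeds in $\mathfrak S_3$, so $|T_1|\in\{1,2,3,6\}$. Hence $|T|\le 6$ unless $|T_1|=6$ and $T\neq T_1$. Assume this, aiming for a contradiction. Then $T\neq T_1$ gives an automorphism of the second kind. Comparing the $y_0^3$ coefficients in $F\circ A=\mu^2G$ from \eqref{eq:second} gives $c_0=\mu^2d_0$. Thus $c_0$ and $d_0$ are both zero or both nonzero, and by the first assertion both are nonzero.

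\emph{The contradiction.} By \cref{lem:Tbound}, after a linear change of $(y_1,y_2)$ the roots of $C$ and $D$ are the two $\Delta$-orbits of size three. This change is induced by a change of $x_1,x_2$, which preserves the normal form. In these coordinates $C=\kappa(y_1^3-y_2^3)$ and $D=\kappa'(y_1^3+y_2^3)$ with $\kappa,\kappa'\neq0$. Since $T_1=\Delta$, the involution $\lambda\mapsto\lambda^{-1}$ lifts to an automorphism of the first kind with $A=s\begin{pmatrix}0&1\\1&0\end{pmatrix}$ for some $s\in\Gm$. The equations \eqref{eq:first} then give:
\begin{itemize}
\item from $C\circ A=\mu^2C$, since $C\circ A=-s^3C$: $\mu^2=-s^3$;
\item from $D\circ A=\nu^2D$, since $D\circ A=s^3D$: $\nu^2=s^3$;
\item from the $y_0^3$ coefficients, $c_0=\mu^2c_0$ and $d_0=\nu^2d_0$: since $c_0,d_0\neq0$, $\mu^2=\nu^2=1$.
\end{itemize}
Together these give $1=\mu^2=-s^3=-\nu^2=-1$, a contradiction. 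Hence $|T|\le6$.

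The only step needing care is the coordinate normalization. The projective change on $\PP^1$ must be realized by a linear change of $x_1,x_2$ that keeps the normal form with $a=b=0$ and $F=c_0y_0^3+C$, $G=d_0y_0^3+D$. It must also rescale $C$ and $D$ only by nonzero constants, so that the $y_0^3$ coefficient comparisons remain valid. This holds because such a change acts only on $(y_1,y_2)$ and leaves $y_0$, $z_3$ and $z_4$ untouched.
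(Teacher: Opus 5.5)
Your proof is correct and follows essentially the paper's route: reduce via \cref{lem:Tbound} to $|T_1|=6$ with an element of the second kind, use $c_0=\mu^2d_0$ and the first assertion to get $c_0d_0\neq0$ and hence $\mu^2=\nu^2=1$ for the first kind, and then show that a reflection in $T_1$, lifted by an antidiagonal matrix, forces $\mu^2=-\nu^2$. The only difference is cosmetic: you lift the specific reflection $\lambda\mapsto\lambda^{-1}\in\Delta=T_1$, whereas the paper first argues that diagonal lifts give only rotations. You also fix without comment which $\Delta$-orbit is the root set of $C$, but this is harmless, since swapping the orbits only exchanges $\mu^2=-s^3$, $\nu^2=s^3$ into $\mu^2=s^3$, $\nu^2=-s^3$.
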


\begin{proof}
By \cref{lem:scalars} we have $F=c_0y_0^3+C$, $G=d_0y_0^3+D$ and $L=L'=0$, and $a=b=0$ gives $a_0=b_0=0$. So $c_0$ and $d_0$ are not both zero by \cref{lem:pointQ}. By \cref{lem:Tbound} we may assume $|T_1|=6$, and we must show that $T=T_1$. By \cref{lem:Tbound} again we may take $C=y_1^3+y_2^3$ and $D=y_1^3-y_2^3$, up to scaling and up to exchanging the names of $C$ and $D$.

Suppose that some $\varphi\in\Aut(X)$ is of the second kind. Its conditions include $c_0=\mu^2d_0$ and $d_0=\nu^2c_0$, so $c_0$ and $d_0$ are both nonzero, and then every element of the first kind has $\mu^2=\nu^2=1$, that is, $C\circ A=C$ and $D\circ A=D$. A diagonal $A=\diag(t_1,t_2)$ then satisfies $t_1^3=t_2^3=1$ and induces a rotation $\lambda\mapsto(t_1/t_2)\lambda$, so a reflection in $T_1$ must be induced by an antidiagonal $A$, say with entries $p,q$. For such an $A$ one has $C\circ A=q^3y_2^3+p^3y_1^3$ and $D\circ A=q^3y_2^3-p^3y_1^3$, so $C\circ A=C$ forces $p^3=q^3=1$ and then $D\circ A=-D$, a contradiction. Hence $T_1$ contains no reflection and $|T_1|\leq3$, which is absurd.
\end{proof}

\begin{lemma}
\label{lem:case2}
Assume that $X$ is not of degenerate type and that $|B\cap\Gm|=2$. Then $a=b=0$, $(L,L')\neq(0,0)$ and $|T|\leq6$.
\end{lemma}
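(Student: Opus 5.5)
Let $A=-\Id\in B\cap\Gm$, so $t=-1$. By \cref{lem:V} it is of the first kind. The equations in the proof of \cref{lem:scalars} give $\mu^2=\nu^2=t^3=-1$. Hence $\mu,\nu\in\{\pm i\}$, and therefore
\[c_0=d_0=0,\qquad Q=Q'=0.\]
Since $t^2=1\neq-1$, the quadratic terms vanish. The linear terms satisfy $-L=-L$, so $L,L'$ are unconstrained. For $a$ we have $\nu a_0=\mu^2a_0=-a_0$, so $a_0\neq0$ forces $\nu=-1$, contradicting $\nu^2=-1$; thus $a_0=0$. Likewise $-\nu a_i=-a_i$ forces $\nu=1$ if some $a_i\neq0$, again impossible. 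Hence $a=0$, and symmetrically $b=0$. Now $c_0=d_0=0$ and $a_0=b_0=0$, so \cref{lem:pointQ} gives $(L,L')\neq(0,0)$.

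It remains to bound $|T|$. By \cref{lem:Tbound}, $T_1$ has index at most two in $T$ and $|T_1|\le6$. Suppose $|T|>6$. Then $|T|=12$ and $|T_1|=6$, so by \cref{lem:Tbound} we may assume $C=y_1^3+y_2^3$ and $D=y_1^3-y_2^3$ (up to scaling and swapping), and $T_1\simeq\mathfrak S_3=\Delta$. We now use the linear forms. Every first-kind element satisfies
\[L\circ A=\mu^2L,\qquad L'\circ A=\nu^2L'.\]
Hence $T_1$ fixes the root of each nonzero form among $L,L'$, viewed as a point of $\PP^1$. But $\Delta$ has no fixed point on $\PP^1$: its smallest orbit is $\{0,\infty\}$. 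Since $(L,L')\neq(0,0)$, this is a contradiction, so $|T_1|\le3$. More precisely, a nonzero $L$ makes $T_1$ fix a point, so $T_1$ is cyclic, and $|T_1|\le3$ by \cref{lem:Tbound}. Therefore $|T|\le 2|T_1|\le6$.

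The main obstacle is only bookkeeping. One must check that the root of $L$ gives a genuinely fixed point of $T_1$ in $\PP^1$ and not just an invariant set. This holds because $L$ is linear, so its zero set is a single point $(y_1:y_2)$. We also need $L\neq0$ or $L'\neq0$ to hold, which is exactly the conclusion of \cref{lem:pointQ}.
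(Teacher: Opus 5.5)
Your proof is correct and essentially the paper's. You use $\mu^2=\nu^2=-1$ to get $c_0=d_0=Q=Q'=0$ and $a=b=0$, apply \cref{lem:pointQ} to get $(L,L')\neq(0,0)$, and then use $L\circ A=\mu^2L$, $L'\circ A=\nu^2L'$ to make $T_1$ fix a point, hence be cyclic of order at most $3$. The preliminary contradiction argument via $\Delta$ is redundant and can be dropped, keeping only the direct argument. Its step ``$|T|=12$, $|T_1|=6$'' also relies on the proof rather than the statement of \cref{lem:Tbound} to rule out $|T_1|\in\{4,5\}$.
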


\begin{proof}
Let $t=-1$ be the nontrivial element of $B\cap\Gm$, with associated $\mu,\nu$, so that $\mu^2=\nu^2=t^3=-1$. From the list of conditions in the proof of \cref{lem:scalars} we get $c_0=-c_0$ and $Q=-Q$, so $c_0=0$ and $Q=0$, and symmetrically $d_0=Q'=0$. The conditions on $a$ read $\nu a_0=-a_0$ and $-\nu a_i=-a_i$ for $i=1,2$. Since $\nu^2=-1$ we have $\nu\neq\pm1$, so $a=0$, and symmetrically $b=0$. Now \cref{lem:pointQ} gives $(L,L')\neq(0,0)$.

Comparing in $F\circ A=\mu^2F$ the parts of degree two in $y_0$ gives $L\circ A=\mu^2L$, and similarly $L'\circ A=\nu^2L'$, for every element of the first kind. Hence $T_1$ fixes the point of $\PP^1$ cut out by whichever of $L,L'$ is nonzero, so it is cyclic and $|T_1|\leq3$ by \cref{lem:Tbound}. Therefore $|T|\leq6$.
\end{proof}

\medskip

Having bounded $|T|$ for the surfaces which are not of degenerate type, we now concentrate on the degenerate type. Assume then that $C=\ell^3$ and $D=m^3$ with $\ell,m$ independent linear forms, and normalize the coordinates so that $C=y_1^3$ and $D=y_2^3$. Every element of the first kind preserves the two points of $\PP^1$ cut out by $y_1$ and $y_2$, so its matrix $A=\diag(s,u)$ is diagonal, and an element of the second kind exchanges them, so its matrix is antidiagonal. The conditions of \cref{prop:autX} can therefore be compared monomial by monomial. We write $B_1$ for the image of $\Aut_1(X)$ under $(A,\mu,\nu)\mapsto(s,u)$, whose kernel is $V$ by  \cref{lem:V}, so that $|\Aut_1(X)|=|V|\cdot|B_1|$.

\begin{lemma}
\label{lem:relations}
With the above normalization, $B_1$ is contained in the subgroup of $\Gm\times\Gm$ annihilated by the following characters, one for each monomial occurring in the equations:
\begin{enumerate}[(i)]
\item $s^{j-3}u^{k}$ for each monomial $y_0^iy_1^jy_2^k$ of $F$ with $i\geq1$;
\item $s^{j}u^{k-3}$ for each monomial $y_0^iy_1^jy_2^k$ of $G$ with $i\geq1$;
\item $s^{6-2j}u^{-2k-3}$ for each monomial $y_0^{1-j-k}y_1^jy_2^k$ of $a$;
\item $s^{-2j-3}u^{6-2k}$ for each monomial $y_0^{1-j-k}y_1^jy_2^k$ of $b$.
\end{enumerate}
Let $\Gamma\subset\ZZ^2$ be the lattice generated by the corresponding exponent vectors. Then $|B_1|$ divides $|pq'-qp'|$ for any two independent vectors $(p,q)$ and $(p',q')$ of $\Gamma$. The exponent vectors of type (i) satisfy $-3\leq p\leq-1$ and $0\leq q\leq2$, those of type (ii) satisfy $0\leq p\leq2$ and $-3\leq q\leq-1$, those of type (iii) are $(6,-3)$, $(4,-3)$ and $(6,-5)$, and those of type (iv) are $(-3,6)$, $(-5,6)$ and $(-3,4)$.
\end{lemma}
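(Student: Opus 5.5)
We will derive the characters by substituting the diagonal matrix $A=\diag(s,u)$ into the conditions \eqref{eq:first} of \cref{prop:autX} and eliminating $\mu,\nu$. Since $C=y_1^3$ and $D=y_2^3$ occur in $F$ and $G$, comparing the coefficients of $y_1^3$ in $F\circ A=\mu^2F$ and of $y_2^3$ in $G\circ A=\nu^2G$ gives $\mu^2=s^3$ and $\nu^2=u^3$.

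\emph{Monomials of $F$ and $G$.} For a monomial $y_0^iy_1^jy_2^k$ of $F$ with $i\geq1$, its coefficient is multiplied by $s^ju^k$ under $F\circ A$, so it must equal $\mu^2=s^3$. This gives $s^{j-3}u^k=1$, with $i+j+k=3$. Hence $0\le j\le 2$, so $-3\le j-3\le -1$, and $0\le k\le 2$. The case of $G$ is symmetric and gives (ii).

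\emph{Monomials of $a$ and $b$.} For $a$ the condition is $\nu s^ju^k=\mu^2=s^3$, that is, $\nu=s^{3-j}u^{-k}$. Squaring and using $\nu^2=u^3$ gives $s^{6-2j}u^{-2k-3}=1$. The three possible monomials $y_0$, $y_1$, $y_2$ give the vectors $(6,-3)$, $(4,-3)$ and $(6,-5)$. The case of $b$ is symmetric. Squaring loses information, but we only claim containment in the annihilator, so this is harmless.

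\emph{Divisibility.} The map $B_1\to\Gm\times\Gm$ is the inclusion of the pairs $(s,u)$. Its image lies in the group $\Gamma^\perp=\{(s,u): s^pu^q=1\ \forall (p,q)\in\Gamma\}$. Let $\Gamma'\subseteq\Gamma$ be the sublattice spanned by two independent vectors $(p,q)$ and $(p',q')$. Then $\Gamma^\perp\subseteq\Gamma'^\perp$, which is the character-dual of $\ZZ^2/\Gamma'$, a finite group of order $|pq'-qp'|$ by Smith normal form. So $|B_1|$ divides $|\Gamma^\perp|$, which divides $|\Gamma'^\perp|=|pq'-qp'|$. Here $B_1$ is finite because $\Aut(X)$ is finite.

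The only subtle point is the first step: the use of $C=y_1^3$ and $D=y_2^3$ to pin down $\mu^2$ and $\nu^2$. This is what makes every constraint a pure character in $(s,u)$.
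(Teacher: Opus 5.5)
Your proof is correct and follows essentially the same route as the paper. You pin down $\mu^2=s^3$ and $\nu^2=u^3$ from the monomials $y_1^3$ and $y_2^3$, compare coefficients monomial by monomial, and square the conditions coming from $a$ and $b$ to eliminate $\nu$ and $\mu$. For the divisibility, you phrase it as $B_1\subseteq\Gamma^\perp\subseteq\Gamma'^\perp$ together with Lagrange's theorem, while the paper says the index of $\Gamma$ divides that of $\Gamma'$; these are the same fact, dualized.
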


\begin{proof}
The monomial $y_1^3$ of $F$ gives $s^3=\mu^2$ and the monomial $y_2^3$ of $G$ gives $u^3=\nu^2$. A monomial $y_0^iy_1^jy_2^k$ of $F$ gives $s^ju^k=\mu^2=s^3$, which is (i), and similarly for $G$. A monomial $y_0^{1-j-k}y_1^jy_2^k$ of $a$ gives $\nu s^ju^k=\mu^2=s^3$, so $\nu=s^{3-j}u^{-k}$. Squaring and using $\nu^2=u^3$ gives (iii), and (iv) is obtained in the same way from $b$. The group $B_1$ is contained in the group of characters of $\ZZ^2/\Gamma$, whose order is the index of $\Gamma$, and that index divides the index of any sublattice of finite index, in particular the absolute value of the determinant of two independent elements of $\Gamma$. The ranges of the exponents follow from $j+k\leq2$ in (i) and (ii) and from $j+k\leq1$ in (iii) and (iv).
\end{proof}

To apply \cref{lem:relations} we need enough monomials in the equations to produce two independent vectors in $\Gamma$. When $a=0$ or $b=0$, the characters of types (iii) or (iv) are not available, and the following lemma guarantees that characters of types (i) and (ii) occur instead.

\begin{lemma}
\label{lem:curve}
With the above normalization, if $a=0$ then $F$ has a monomial divisible by $y_0$, and if $b=0$ then $G$ has a monomial divisible by $y_0$.
\end{lemma}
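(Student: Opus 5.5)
We argue by contradiction using \cref{lem:jacobian}, by exhibiting a point of the affine cone where the Jacobian matrix of $(f,g)$ has rank zero. By the symmetry exchanging $(z_3,F,a)$ with $(z_4,G,b)$ and $y_1$ with $y_2$, it is enough to treat the case $a=0$. So suppose $a=0$ and that $F$ has no monomial divisible by $y_0$. Then $F=C=y_1^3$ with our normalization, and
\[
f=z_3^2+x_1^3,\qquad g=z_4^2+x_0z_3\,b(y)+G(y),
\]
with $G=d_0y_0^3+y_0^2L'+y_0Q'+y_2^3$.

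We look at the locus $z_3=x_1=0$, where $f$ vanishes together with all its partial derivatives. Indeed $\partial f/\partial z_3=2z_3$ and $\partial f/\partial x_1=3x_1^2$, and the other partials of $f$ are identically zero. Hence at every point of the affine cone with $z_3=x_1=0$ the gradient of $f$ vanishes, and the Jacobian matrix there has rank at most one. This already shows that $X$ is singular along the curve $X\cap\{z_3=x_1=0\}$. This set is indeed a curve: it is cut out in the surface $\{f=0\}\cap\{g=0\}$ by the condition $z_3=x_1=0$, which is a single extra condition on $X$ since $z_3^2=-x_1^3$ there, or more directly it is the curve $V(z_3,x_1,g)$ in $\PW$, of dimension $4-3=1$.

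A canonical model has only isolated singularities (rational double points), so a singular curve is a contradiction. Concretely, by \cref{lem:jacobian} the image of every point of this curve would be a singular point of $X$, while $X$ has finitely many singular points. If one prefers to produce a rank-zero point, one can pick a point of the curve where additionally $\partial g$ vanishes, but this is unnecessary.

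The only step needing care is to check that $V(z_3,x_1,g)$ is nonempty of dimension one inside the cone. Since $g$ restricted to $z_3=x_1=0$ is the weighted form $z_4^2+G(x_0^2,0,x_2)$ in the three variables $x_0,x_2,z_4$, its zero set in $\PP(1,2,3)$ is a curve. That curve lies in $X$, because $f$ vanishes identically there. This finishes the argument, and the case $b=0$ follows by the symmetry.
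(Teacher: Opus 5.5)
Your proof is correct and is essentially the paper's argument: with $a=0$ and $F=y_1^3$, the gradient of $f=z_3^2+x_1^3$ vanishes on $\{x_1=z_3=0\}$. There $g$ restricts to the nonzero form $z_4^2+G(x_0^2,0,x_2)$, so \cref{lem:jacobian} gives a curve of singular points on $X$, contradicting that $X$ has isolated singularities. Your opening sentence promising a rank-zero point does not match the argument you actually give; as you note yourself, the rank-at-most-one argument along the curve is what does the work.
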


\begin{proof}
If $a=0$ and $F=y_1^3$, then $f=z_3^2+x_1^3$, whose gradient vanishes along the plane $\Pi=\{x_1=z_3=0\}\subset\AA^5$. Since $f$ vanishes on $\Pi$ and $g$ restricted to $\Pi$ is not identically zero, $\mathcal C\cap\Pi$ is a surface contained in the affine cone, along which the Jacobian matrix has rank at most one. By \cref{lem:jacobian} its image in $X$, which is a curve, consists of singular points, contradicting the fact that $X$ has isolated singularities. The second assertion is symmetric.
\end{proof}

The following lemma excludes the only configuration of the degenerate type which would give an automorphism group of order $72$. It will be used in the proof of \cref{prop:degenerate}, in the case $|B_1|=9$ with elements of the second kind.

\begin{lemma}
\label{lem:X0}
The complete intersection in $\PW$ defined by
\[z_3^2+x_0^6+x_1^3=0,\qquad z_4^2+x_0^6+x_2^3=0\]
has a simple elliptic singularity at the point $P_0=(0:0:1:0:\sqrt{-1})$. In particular, it is not the canonical model of a surface of general type.
\end{lemma}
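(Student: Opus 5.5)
First I check that $P_0$ lies on the surface $X_0=V(f,g)$ with $f=z_3^2+x_0^6+x_1^3$ and $g=z_4^2+x_0^6+x_2^3$. At $P_0=(0:0:1:0:\sqrt{-1})$ we have $f=0$ and $g=(\sqrt{-1})^2+1=0$. The point is not on $\Sigma_2$ (since $z_4\neq0$) nor on $\Sigma_3$ (since $x_2\neq0$), so it lies in the smooth locus of $\PW$ and we may work in an affine chart. In the chart $x_2=1$ we get the quotient $\AA^4/\mu_2$, with $\mu_2$ acting with weights $(1,0,1,1)$ on $(x_0,x_1,z_3,z_4)$. The point $P_0$ has trivial stabilizer, so near it the local coordinates are simply $(x_0,x_1,z_3,z_4-\sqrt{-1})$.

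Next I eliminate a variable. Near $P_0$ we have $\partial g/\partial z_4=2z_4\neq0$, so $g=0$ defines a smooth hypersurface on which $z_4$ is an analytic function of $x_0,x_1,z_3$ (in fact of $x_0$ alone). The germ of $X_0$ at $P_0$ is therefore the germ at the origin of
\[\{z_3^2+x_0^6+x_1^3=0\}\subset\CC^3.\]
This is the standard $\tilde E_8$ singularity $z^2+x^3+y^6=0$, a simple elliptic singularity of degree one. To confirm this I use the weighted blow-up with weights $(1,2,3)$ on $(x_0,x_1,z_3)$. The equation is weighted homogeneous of degree $6$, which equals the sum of the weights, so the exceptional curve is $\{z^2+x^3+y^6=0\}\subset\PP(1,2,3)$. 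This curve is a smooth elliptic curve, and it avoids the singular points of $\PP(1,2,3)$. Hence the singularity is not rational: its minimal resolution contains an elliptic curve, and the geometric genus is $p_g=1$. Alternatively, a quasi-homogeneous isolated singularity is rational exactly when the sum of the weights exceeds the degree, and here $1/6+1/3+1/2=1$, which is the borderline, non-rational case.

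Finally, $X_0$ is a complete intersection of two sextics in the smooth locus of $\PW$; one can check there are no points on $\Sigma_2,\Sigma_3$. Still, \cref{thm:catanese}(2) requires a canonical model to have only rational double points, and the simple elliptic singularity at $P_0$ is not a rational double point. So $X_0$ is not the canonical model of a surface of general type. The only real obstacle is identifying the singularity type correctly. The weighted blow-up computation, or the weight-sum criterion, settles this cleanly.
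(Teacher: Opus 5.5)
Your proof is correct and is essentially the paper's argument: you work in the orbifold chart $x_2=1$, where $P_0$ has trivial stabilizer, and eliminate $z_4$ via $g$, whereas the paper eliminates $x_2$ on the affine cone and slices transversally to the $\Gm$-orbit; both reduce to the germ $z_3^2+x_1^3+x_0^6=0$, i.e.\ $\widetilde E_8$. Your weighted blow-up (or weight-sum) check that this germ is not rational is a detail the paper takes for granted; the fact that canonical models have only rational double points is the general result recalled in \cref{sec:prelim}, not the converse direction stated in \cref{thm:catanese}(2), which you cite instead.
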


\begin{proof}
The point $P_0$ lies on this surface and in the smooth locus of $\PW$. We consider its lift $p=(0,0,1,0,\sqrt{-1})\in\AA^5$, and we write $f=z_3^2+x_0^6+x_1^3$ and $g=z_4^2+x_0^6+x_2^3$. The gradient of $f$ vanishes at $p$, while $\partial g/\partial x_2$ does not. Hence $\{g=0\}$ is smooth at $p$, with local coordinates $x_0$, $x_1$, $z_3$, $z_4$, and the cone over the surface is cut out on it by $f$, which does not involve $z_4$. The orbit of the $\Gm$-action at $p$ is the $z_4$-direction, so a transverse slice of the cone at $p$, which is a local model of the surface at $P_0$, is the surface singularity $z_3^2+x_1^3+x_0^6=0$. This is the simple elliptic singularity $\widetilde E_8$, which is not a rational double point.
\end{proof}

We can now bound the order of the automorphism group in the degenerate type. The proof combines \cref{lem:V} with the bounds on $|B_1|$ given by \cref{lem:relations}, distinguishing whether $a$ and $b$ vanish.

\begin{proposition}
\label{prop:degenerate}
If $X$ is of degenerate type then $|\Aut(X)|\leq64$.
\end{proposition}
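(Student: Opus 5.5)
We normalize $C=y_1^3$, $D=y_2^3$ as above. The plan is to write $|\Aut(X)|=[\Aut(X):\Aut_1(X)]\cdot|V|\cdot|B_1|$, with the index at most $2$. We then bound $|B_1|$ through \cref{lem:relations}, that is, by exhibiting two independent vectors of $\Gamma$ with small determinant. We split into cases according to how many of $a,b$ vanish, since this fixes $|V|$ by \cref{lem:V}.

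\emph{Case $a\neq0$, $b\neq0$.} Here $|V|=1$, and we need $|B_1|\leq32$. Take one vector of type (iii), from $\{(6,-3),(4,-3),(6,-5)\}$, and one of type (iv), from $\{(-3,6),(-5,6),(-3,4)\}$. All nine determinants are at most $32$ in absolute value: for instance $36-9=27$ for the first pair, and the extremes are $(6,-5),(-3,6)$ giving $21$ and $(4,-3),(-5,6)$ giving $9$, so everything stays within $32$. This case is therefore immediate.

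\emph{Case exactly one of $a,b$ is zero, say $a=0\neq b$.} Here $|V|=2$, and we need $|B_1|\leq16$. By \cref{lem:curve}, $F$ contains a monomial divisible by $y_0$, which gives a vector $(p,q)$ with $-3\leq p\leq-1$ and $0\leq q\leq2$. We pair it with the type (iv) vectors. The determinant of $(p,q)$ with $(-3,6)$ is $6p+3q$, which lies in $[-18,3]$, and it vanishes only when $(p,q)=(-1,2)$. We will minimize over the three type (iv) vectors for each of the nine possible $(p,q)$, and use divisibility when several vectors are present. If some $(p,q)$ only gives $|B_1|\leq18$, we refine: $|B_1|$ divides the gcd of all available determinants, and for instance $6p+3q$ is a multiple of $3$.

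\emph{Case $a=b=0$.} Here $|V|=4$, and we need $|B_1|\leq8$ when there are elements of the second kind and $|B_1|\leq16$ otherwise. \cref{lem:curve} gives a type (i) vector $(p,q)$ and a type (ii) vector $(p',q')$. Their determinant $pq'-qp'$ has absolute value at most $3\cdot3+2\cdot2=13$. The obstacle is the sharp bound. The dangerous configurations are the ones with large determinant, such as $(-3,0)$ with $(0,-3)$, which come from monomials $y_0^3$ in both $F$ and $G$ and give $|B_1|$ dividing $9$. That would yield $2\cdot4\cdot9=72$ if elements of the second kind exist. Requiring an element of the second kind forces the swap $(s,u)\mapsto(u,s)$ to preserve $\Gamma$, which forces symmetric monomial data. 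The case $|B_1|=9$ then pins $F=c_0y_0^3+y_1^3$ and $G=d_0y_0^3+y_2^3$ up to rescaling, which is exactly the surface of \cref{lem:X0}, excluded there. The remaining determinants $\leq8$ then give $|\Aut(X)|\leq64$. Without second kind elements, $4\cdot13<64$ gives the bound directly. The hardest step is this last enumeration: checking that every configuration with $|B_1|>8$ and a swap symmetry reduces to \cref{lem:X0}.
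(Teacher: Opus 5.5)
Your case split and tools match the paper's, and your case $ab\neq0$ is fine: the nine determinants are at most $27$.

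\textbf{Case where exactly one of $a,b$ vanishes.} This case does not close as you set it up. You allow $[\Aut(X):\Aut_1(X)]=2$ there and so aim for $|B_1|\le16$. The gcd refinement cannot deliver this. Suppose $b=b_0y_0$, the only monomial of $F$ divisible by $y_0$ is $y_0^3$, and $G$ has no such monomial. Then $\Gamma=\langle(-3,0),(-3,6)\rangle$ has index exactly $18$, and likewise with $(-5,6)$ when $b=b_1y_1$. So \cref{lem:relations} leaves $|B_1|=18$ open, and your count gives $2\cdot2\cdot18=72$. This configuration is in fact not a canonical model: it has an $\widetilde E_8$ point at $(0:0:1:0:\sqrt{-1})$ by the argument of \cref{lem:X0}. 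But nothing in your proposal detects that.

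The missing observation is in the last two conditions of \eqref{eq:second}, $\nu(a\circ A)=\mu^2b$ and $\mu(b\circ A)=\nu^2a$. They force $a=0\Leftrightarrow b=0$ for every element of the second kind. So when exactly one of $a,b$ vanishes, $\Aut(X)=\Aut_1(X)$, and the crude bound $|B_1|\le18$ already gives $|\Aut(X)|\le36$; this is what the paper does. Note also that only six type (i) vectors occur, because $j+k\le2$; $(-1,2)$ is not one of them.

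\textbf{Case $a=b=0$.} Your bound $|pq'-qp'|\le13$ is too weak in two ways. It does not show that the two vectors are independent, which \cref{lem:relations} requires. And when elements of the second kind exist, it leaves $|B_1|\in\{9,\dots,13\}$ to exclude, while you only address $9$. Use $j+k\le2$ here as well. It gives $-p>q\ge0$ and $-q'>p'\ge0$, hence $1\le pq'-qp'\le pq'\le9$. This settles independence and gives $|\Aut_1(X)|\le36$.

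Now suppose $|B_1|=9$ and there is an element of the second kind.
\begin{enumerate}
\item Every such determinant is a multiple of $9$, hence equal to $9$. This forces $pq'=9$, so $p=q'=-3$.
\item Pair a type (i) vector $(p,q)$ with the type (ii) vector $(q,p)$ supplied by the swap symmetry. This gives $9-q^2=9$, so $q=0$.
\item Thus every type (i) vector is $(-3,0)$ and every type (ii) vector is $(0,-3)$. That is, $F=c_0y_0^3+y_1^3$ and $G=d_0y_0^3+y_2^3$, with $c_0d_0\neq0$ by \cref{lem:curve}.
\item After rescaling, this is the surface of \cref{lem:X0}, which is excluded.
\end{enumerate}
This is precisely the enumeration you flag as the hardest step and leave undone. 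With it, and with the observation above, your argument becomes the paper's proof.
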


\begin{proof}
We use \cref{lem:relations} throughout, and $|\Aut(X)|$ equals $|\Aut_1(X)|$ or $2\,|\Aut_1(X)|$, according to whether there are elements of the second kind or not. Such an element satisfies $\nu(a\circ A)=\mu^2b$ and $\mu(b\circ A)=\nu^2a$, so $a=0$ if and only if $b=0$.

Assume first $a=b=0$, so $|V|=4$ by \cref{lem:V}. By \cref{lem:curve} the lattice $\Gamma$ contains a vector $(p,q)$ of type (i) and a vector $(p',q')$ of type (ii). Since $j+k\leq2$ in (i) and (ii), we have $p+q\leq-1$ and $p'+q'\leq-1$, that is, $-p>q\geq0$ and $-q'>p'\geq0$. Hence $pq'=(-p)(-q')>qp'$, so the two vectors are independent and $1\leq pq'-qp'\leq pq'\leq9$. Thus $|B_1|\leq9$, whence $|\Aut_1(X)|\leq36$. If there is no element of the second kind we are done. If there is one, its matrix is antidiagonal, so it carries the monomial $y_0^iy_1^jy_2^k$ to a multiple of $y_0^iy_2^jy_1^k$. Comparing $F\circ A=\mu^2G$ and $G\circ A=\nu^2F$ we see that the support of $G$ is obtained from that of $F$ by exchanging $y_1$ and $y_2$, so $\Gamma$ is stable under $(p,q)\mapsto(q,p)$. Suppose $|B_1|=9$. Then $9$ divides $pq'-qp'$ for every pair of vectors of types (i) and (ii) in $\Gamma$, and since $1\leq pq'-qp'\leq9$ we get $pq'-qp'=9$, hence $p=q'=-3$. Taking $(p',q')=(q,p)$, we obtain that $9$ divides $9-q^2$, so $q=0$. Thus $(-3,0)$ and $(0,-3)$ lie in $\Gamma$, and since the index of $\Gamma$ is at least $|B_1|=9$ we get $\Gamma=3\ZZ\times3\ZZ$. Every vector of type (i) then equals $(-3,0)$ and every vector of type (ii) equals $(0,-3)$, so the only monomial of $F$ divisible by $y_0$ is $y_0^3$, and likewise for $G$. Rescaling the variables we obtain the complete intersection of \cref{lem:X0}, which is excluded. Hence $|B_1|\leq8$ and $|\Aut(X)|=2|\Aut_1(X)|\leq2\cdot4\cdot8=64$.

Similarly, if we assume that exactly one of $a,b$ vanishes, say $b=0$ and $a\neq0$, so $|V|=2$ and there is no element of the second kind. By \cref{lem:curve} the lattice $\Gamma$ contains a vector $(p',q')$ of type (ii), and it contains one of the three vectors of type (iii). These are independent. A direct computation shows that the corresponding determinants are $6q'+3p'$, $4q'+3p'$ and $6q'+5p'$, which for $0\leq p'\leq2$ and $-3\leq q'\leq-1$ have absolute value at most $18$. Hence $|B_1|\leq18$ and $|\Aut(X)|=|\Aut_1(X)|\leq36$.

Finally assume $a\neq0$ and $b\neq0$, so $|V|=1$. Then $\Gamma$ contains one of the three vectors of type (iii) and one of the three of type (iv), which are independent, and the nine corresponding determinants have absolute values $27$, $21$, $15$, $15$, $9$, $7$, $21$, $11$ and $9$. Hence $|B_1|\leq27$ and $|\Aut(X)|\leq2\cdot27=54$.
\end{proof}

The following examples show that the bound of \cref{prop:degenerate} is sharp, and that there are surfaces of degenerate type with automorphisms of prime order $7$ and $13$, which cannot occur outside the degenerate type, since there $|\Aut(X)|=|V|\cdot|B\cap\Gm|\cdot|T|$ has only the prime factors $2$ and $3$ by \cref{lem:V,lem:scalars,lem:Tbound}. The singular loci are computed by means of \cref{lem:jacobian}, as the points of $X$ where the gradients of $f$ and $g$ are proportional. In each case $C$ and $D$ are coprime, so the surface lies in the smooth locus of $\PW$ and is a canonical model by \cref{thm:catanese}(2).

\begin{example}
\label{ex:primes}
(1) The complete intersection $z_3^2+x_0^4x_2+x_1^3=z_4^2+x_0^4x_1+x_2^3=0$ is of degenerate type, with $C=y_1^3$, $D=y_2^3$ and $a=b=0$. Its singular points are $(0:0:1:0:\sqrt{-1})$ and $(0:1:0:\sqrt{-1}:0)$, both of type $E_6$. The conditions \eqref{eq:first} read $u=\mu^2$, $s^3=\mu^2$, $s=\nu^2$ and $u^3=\nu^2$, whence $u=s^3$ and $s^8=1$. So $B_1$ is cyclic of order eight and $|\Aut_1(X)|=32$. The automorphism $x_1\leftrightarrow x_2$, $z_3\leftrightarrow z_4$ exchanges the two equations, so $|\Aut(X)|=64$ and the bound of \cref{prop:degenerate} is attained.

(2) The complete intersection $z_3^2+x_0^4x_2+x_1^3=z_4^2+x_0x_1z_3+x_2^3=0$ is of degenerate type, with $a=0$ and $b=y_1$. Its singular points are $(1:0:0:0:0)$, of type $A_1$, and $(0:0:1:0:\sqrt{-1})$, of type $E_6$. The conditions \eqref{eq:first} read $u=\mu^2$, $s^3=\mu^2$, $u^3=\nu^2$ and $\mu s=\nu^2$, which give $u=s^3$, $\mu=s^8$ and $s^{13}=1$. Hence the coordinate change
\[x_0\mapsto x_0,\qquad x_1\mapsto\zeta x_1,\qquad x_2\mapsto\zeta^3x_2,\qquad z_3\mapsto\zeta^8z_3,\qquad z_4\mapsto\zeta^{11}z_4,\qquad\zeta^{13}=1,\]
which multiplies the first equation by $\zeta^3$ and the second by $\zeta^9$, is an automorphism of order thirteen, and $|\Aut(X)|=26$.

(3) The complete intersection $z_3^2+x_0^4x_2+x_1^3=z_4^2+x_0^2x_1^2+x_2^3=0$ is of degenerate type, with $C=y_1^3$, $D=y_2^3$ and $a=b=0$. Its singular points are $(1:0:0:0:0)$, $(0:0:1:0:\sqrt{-1})$ and $(0:1:0:\sqrt{-1}:0)$, of types $A_5$, $E_6$ and $A_2$, respectively. The conditions \eqref{eq:first} read $u=\mu^2$, $s^3=\mu^2$, $s^2=\nu^2$ and $u^3=\nu^2$, whence $u=s^3$ and $s^7=1$, so $B_1$ is cyclic of order seven and $|\Aut_1(X)|=28$. The supports of $F$ and $G$ are not exchanged by $y_1\leftrightarrow y_2$, so there is no element of the second kind and $|\Aut(X)|=28$. A generator of $B_1$ gives the automorphism of order seven
\[x_0\mapsto x_0,\qquad x_1\mapsto\zeta x_1,\qquad x_2\mapsto\zeta^3x_2,\qquad z_3\mapsto\zeta^5z_3,\qquad z_4\mapsto\zeta z_4,\qquad \zeta^7=1.\]
\end{example}

Combining the results of this section we obtain the bound of \cref{thm:A}, together with the restrictions on the surfaces attaining it which are the starting point of \cref{sec:extremal}.

\begin{theorem}
\label{thm:bound}
Let $X$ be the canonical model of a minimal surface of general type with $K^2=p_g=1$. Then $|\Aut(X)|\leq72$, and if equality holds then $X$ is not of degenerate type, $a=b=0$, $|B\cap\Gm|=3$ and $|T|=6$, where $B$ and $T$ are the images of $\Aut(X)$ in $\GL_2$ and in $\PGL_2$, as in \eqref{eq:seq}.
\end{theorem}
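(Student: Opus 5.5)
The plan is to split into the degenerate and non-degenerate cases and use the factorization $|\Aut(X)|=|V|\cdot|B\cap\Gm|\cdot|T|$ from \eqref{eq:seq}. If $X$ is of degenerate type, \cref{prop:degenerate} gives $|\Aut(X)|\leq64<72$. So equality forces $X$ not to be of degenerate type, and from now on we assume this.

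Outside the degenerate type, \cref{lem:Tbound} gives $|T|\leq 2|T_1|\leq12$. By \cref{lem:V} we have $|V|\leq4$, and by \cref{lem:scalars} we have $|B\cap\Gm|\leq3$. We then run through the possible values of $|B\cap\Gm|$.

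\emph{Case $|B\cap\Gm|=1$.} Here $|\Aut(X)|\leq4\cdot1\cdot12=48$.

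\emph{Case $|B\cap\Gm|=2$.} By \cref{lem:case2}, $a=b=0$ and $|T|\leq6$, so $|\Aut(X)|\leq4\cdot2\cdot6=48$.

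\emph{Case $|B\cap\Gm|=3$, subcase $a=b=0$.} Then $|V|=4$, and \cref{lem:case3} gives $|T|\leq6$, so $|\Aut(X)|\leq4\cdot3\cdot6=72$.

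\emph{Case $|B\cap\Gm|=3$, subcase $(a,b)\neq(0,0)$.} Then $|V|\leq2$. If exactly one of $a,b$ vanishes, there is no element of the second kind, since the conditions \eqref{eq:second} give $a=0\iff b=0$. Hence $T=T_1$, $|T|\leq6$, and $|\Aut(X)|\leq2\cdot3\cdot6=36$. If both $a,b$ are nonzero, then $|V|=1$ and $|\Aut(X)|\leq3\cdot12=36$.

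In all cases $|\Aut(X)|\leq72$. Equality is possible only in the subcase $|B\cap\Gm|=3$, $a=b=0$, and there it forces $|V|=4$ and $|T|=6$. This gives all the asserted restrictions.

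The only delicate point is to check that every case has been covered. In particular, the $a\neq0$ or $b\neq0$ branch with $|B\cap\Gm|=3$ must be handled with the crude bound $|T|\leq12$ rather than with \cref{lem:case3}, since that lemma assumes $a=b=0$. The bounds above show this branch is harmless, so no further lemma is needed.
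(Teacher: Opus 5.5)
Your proposal is correct and follows the paper's own argument: you dispose of the degenerate type via \cref{prop:degenerate}, then split on $|B\cap\Gm|\in\{1,2,3\}$ and, when $|B\cap\Gm|=3$, on the vanishing of $a$ and $b$, invoking \cref{lem:V,lem:scalars,lem:Tbound,lem:case3,lem:case2} at the same points the paper does. The only cosmetic difference is that the paper first uses \cref{lem:scalars} to write $a=a_0y_0$, $b=b_0y_0$ and phrases the subcases in terms of $a_0,b_0$.
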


\begin{proof}
If $X$ is of degenerate type this is \cref{prop:degenerate}, so assume it is not. We use $|\Aut(X)|=|V|\cdot|B\cap\Gm|\cdot|T|$ together with $|V|\leq4$ (\cref{lem:V}), $|B\cap\Gm|\leq3$ (\cref{lem:scalars}) and $|T|\leq12$ (\cref{lem:Tbound}).

Suppose $|B\cap\Gm|=3$. By \cref{lem:scalars} we have $a=a_0y_0$ and $b=b_0y_0$. If $a_0=b_0=0$ then $|V|=4$ and $|T|\leq6$ by \cref{lem:case3}, so $|\Aut(X)|\leq72$. If exactly one of $a_0,b_0$ vanishes, say $b_0=0$ and $a_0\neq0$, then $|V|=2$, and there is no element of the second kind because such an element forces $a=0$. So $T=T_1$ and $|\Aut(X)|\leq2\cdot3\cdot6=36$. If $a_0b_0\neq0$ then $|V|=1$ and $|\Aut(X)|\leq3\cdot12=36$.

Suppose $|B\cap\Gm|=2$. Then $|\Aut(X)|\leq4\cdot2\cdot6=48$ by \cref{lem:case2}. Suppose $|B\cap\Gm|=1$. Then $|\Aut(X)|\leq4\cdot1\cdot12=48$.

In all cases $|\Aut(X)|\leq72$, and equality forces the first case with $a_0=b_0=0$, that is $a=b=0$, together with $|V|=4$, $|B\cap\Gm|=3$ and $|T|=6$.
\end{proof}

%----------------------------------------------------------------------
\section{The surfaces with the largest automorphism group}
\label{sec:extremal}
%----------------------------------------------------------------------

In this section we classify the surfaces attaining the bound of \cref{thm:bound}, we compute their automorphism groups and singularities, and we determine the isomorphisms among them, which proves \cref{thm:A}. Throughout this section $X$ is a canonical model with $|\Aut(X)|=72$. By \cref{thm:bound} it is not of degenerate type, $a=b=0$, and the groups $V$, $B$ and $T$ of \eqref{eq:seq} satisfy $|V|=4$, $|B\cap\Gm|=3$ and $|T|=6$. Moreover, by \cref{lem:scalars}
\[F=c_0y_0^3+C,\qquad G=d_0y_0^3+D,\]
with $(c_0,d_0)\neq(0,0)$ by \cref{lem:case3}. Since $|T|=6$ and $T_1$ has index at most two in $T$, either $T=T_1$ has order six, or $T_1$ has order three and there are elements of the second kind. We treat the two cases separately, and we find one surface in the first case, one surface in the second case when the second kind acts on $\PP^1$ through the torus normalizing $T_1$, and a one-parameter family when it acts through a reflection. For short, we write $\sigma_1,\sigma_2$ for the automorphisms $x_1\mapsto\zeta_3x_1$ and $x_2\mapsto\zeta_3x_2$ (with $\mu=\nu=1$) and $\varepsilon_1,\varepsilon_2$ for the automorphisms $z_3\mapsto-z_3$ and $z_4\mapsto-z_4$, which generate $V$.

We begin with the case $T=T_1$, in which the automorphisms of the first kind already act on $\PP^1$ through a group of order six. It leads to a single surface, with two singular points.

\begin{proposition}
\label{prop:X1}
Suppose $T=T_1$. Then $X$ is isomorphic to
\[X_{2D_4}\colon\quad z_3^2+x_0^6+x_1^3+x_2^3=0,\qquad z_4^2+x_1^3-x_2^3=0.\]
The surface $X_{2D_4}$ is a canonical model whose singular locus consists of the two points $(1:0:0:\pm\sqrt{-1}:0)$, both of type $D_4$, and $\Aut(X_{2D_4})\simeq((\ZZ/3\ZZ)^2\rtimes\ZZ/4\ZZ)\times\ZZ/2\ZZ$, where the cyclic group of order four acts through its quotient of order two by exchanging the two factors.
\end{proposition}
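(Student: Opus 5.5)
We first normalize the equations using the results of \cref{sec:bound}, and then compute the singularities and the automorphism group of the resulting surface. Since $|\Aut(X)|=72$ and $T=T_1$, we have $|T_1|=6$. By \cref{lem:Tbound}, after a change of the coordinates $(y_1,y_2)$ the roots of $C$ and $D$ are the two $\Delta$-orbits $\{1,\zeta_3,\zeta_3^2\}$ and $\{-1,-\zeta_3,-\zeta_3^2\}$. Up to scaling and renaming, we may therefore take $C=y_1^3-y_2^3$ and $D=y_1^3+y_2^3$, or the reverse. We already know $a=b=0$, $F=c_0y_0^3+C$ and $G=d_0y_0^3+D$ with $(c_0,d_0)\neq(0,0)$.

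The key step is to show that exactly one of $c_0,d_0$ vanishes. $T_1$ contains a reflection, induced by an antidiagonal matrix $A$ with entries $p,q$. Up to the scalars in $B$ we may take $p=q=1$, so $A$ swaps $y_1$ and $y_2$. This gives $C\circ A=-C$ and $D\circ A=D$, hence $\mu^2=-1$ and $\nu^2=1$. The $y_0^3$-coefficients in \eqref{eq:first} then give $c_0=-c_0$, so $c_0=0$, while $d_0$ is unrestricted. The other choices of $p,q$ differ by the $\sigma_i$ and give the same conclusion. If instead the roles of $C$ and $D$ are reversed, we get $d_0=0$. In either case the nonvanishing constant must be nonzero, and we rescale $x_0$ to make it $1$. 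Exchanging the names $z_3,z_4$, we may assume the $x_0^6$ term sits in the first equation together with $C$. Rescaling $x_1,x_2$ by sixth roots of unity and possibly multiplying $x_2$ by $-1$, we arrive at the stated form of $X_{2D_4}$. This case is exactly the opposite of the one excluded in \cref{lem:case3}, where both $c_0,d_0\neq0$ forced $D\circ A=-D$.

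Next we locate the singularities via \cref{lem:jacobian}. The gradients of the two equations are $(6x_0^5,3x_1^2,3x_2^2,2z_3,0)$ and $(0,3x_1^2,-3x_2^2,0,2z_4)$. These are proportional only when $z_4=0$ and either $x_1=x_2=0$, or $x_0=z_3=0$ with $x_1^2=\pm x_2^2$. The second possibility contradicts $x_1^3=x_2^3$ together with $x_1^3+x_2^3=0$ unless the point is $0$. So the singular points are $(1:0:0:\pm\sqrt{-1}:0)$. By the criterion of \cref{thm:catanese}(2), the surface is a canonical model: $C$ and $D$ are coprime, and it remains to check that these points are rational double points. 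At such a point we set $x_0=1$. The first equation solves for $z_3$ as a smooth function, leaving locally $z_4^2+x_1^3-x_2^3=0$. This is $z_4^2+(x_1-x_2)(x_1-\zeta_3x_2)(x_1-\zeta_3^2x_2)$, which is $D_4$.

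Finally we compute the automorphism group. By \cref{prop:autX} and $a=b=0$, the conditions are $C\circ A=\mu^2C$ and $D\circ A=\nu^2D$, together with $c_0=\mu^2c_0$ (so $\mu^2=1$) and no second kind. The second kind is impossible since $c_0\neq0=d_0$ and \eqref{eq:second} would give $c_0=\mu^2d_0$. So $\Aut(X)=\Aut_1(X)$, of order $72$. The diagonal $A$ give $(\ZZ/3\ZZ)^2$, generated by $\sigma_1,\sigma_2$, and $V=\langle\varepsilon_1,\varepsilon_2\rangle$. Consider the swap $\tau\colon x_1\leftrightarrow x_2$ with $\nu=1$ and $\mu=\sqrt{-1}$, i.e.\ $z_3\mapsto\sqrt{-1}z_3$. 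Its square is $\varepsilon_1$, so $\tau$ has order four, acts on $(\ZZ/3\ZZ)^2$ by swapping the factors, and commutes with $\varepsilon_2$. This yields $((\ZZ/3\ZZ)^2\rtimes\ZZ/4\ZZ)\times\ZZ/2\ZZ$. The main obstacle is bookkeeping: keeping the normalization choices (which of $C,D$ carries $y_0^3$, the sign of $x_2$) consistent so that they land exactly on the stated equations.
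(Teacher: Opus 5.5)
Your overall route is the paper's: normalize via \cref{lem:Tbound}, use a reflection in $T_1$ to show one of $c_0,d_0$ vanishes, find the singular points via \cref{lem:jacobian}, and count automorphisms. But two steps are wrong as written.

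\textbf{The key step.} An antidiagonal lift in $B$ of a reflection of $T_1$ has entries $p,q$. The condition $C\circ A\in\CC C$ only forces $p^3=q^3$, and nothing forces this common value to be $1$. Multiplying by the scalars $\zeta_3^k\Id$ of $B$, or by $\sigma_1,\sigma_2$, leaves $p^3$ unchanged, so ``up to the scalars in $B$ we may take $p=q=1$'' is unjustified. The conclusion you draw from it is in fact false. Apply $x_2\mapsto-x_2$ to $X_{2D_4}$: the result has $C=y_1^3-y_2^3$, $D=y_1^3+y_2^3$ and $c_0=1\neq0$, and there the reflection lifts with $p^3=q^3=-1$.

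What a general lift does give is $\mu^2=-p^3$ and $\nu^2=p^3$, hence $\nu^2=-\mu^2$. So $c_0=\mu^2c_0$ and $d_0=\nu^2d_0$ cannot both hold with $c_0d_0\neq0$. This is the paper's argument, and it is equivalent to the argument of \cref{lem:case3} you allude to at the end. Together with $(c_0,d_0)\neq(0,0)$ it shows that exactly one constant vanishes. That is all your later normalization needs, since it already allows $x_2\mapsto-x_2$. You should drop the claim about which one vanishes.

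\textbf{The generator $\tau$.} Your $\tau$ is not an automorphism of $X_{2D_4}$ as stated. The swap $x_1\leftrightarrow x_2$ fixes $F=y_0^3+y_1^3+y_2^3$ and sends $G=y_1^3-y_2^3$ to $-G$. So it requires $\mu^2=1$, as you yourself derived from $c_0=1$, and $\nu^2=-1$. With $z_3\mapsto\sqrt{-1}\,z_3$ the pullback $-z_3^2+x_0^6+x_1^3+x_2^3$ is not a multiple of $f$.

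The correct choice is $z_3\mapsto z_3$, $z_4\mapsto\sqrt{-1}\,z_4$. Then $\tau^2=\varepsilon_2$, and the central direct factor is $\langle\varepsilon_1\rangle$. It does not lie in $\langle\sigma_1,\sigma_2,\tau\rangle$, because every element of that group fixes $z_3$. The isomorphism type you state is unaffected. You should also say why these elements exhaust $\Aut(X_{2D_4})$. Either use $|\Aut|\leq72$ from \cref{thm:bound}, or count directly as the paper does: nine diagonal and nine antidiagonal matrices, $|V|=4$, and no element of the second kind. The direct count is preferable, since the converse direction of \cref{thm:A} needs $|\Aut(X_{2D_4})|=72$ independently of the hypothesis on $X$.
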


\begin{proof}
By \cref{lem:Tbound}, since $|T_1|=6$, after a coordinate change and a rescaling of $f$ and $g$ we may assume that $\{C,D\}=\{y_1^3+y_2^3,\ y_1^3-y_2^3\}$. The coordinate change $x_2\mapsto-x_2$ exchanges these two forms and fixes everything else, so composing it with the exchange of $f$ and $g$ and of $z_3$ and $z_4$ we may permute $c_0$ and $d_0$ while keeping $C=y_1^3+y_2^3$ and $D=y_1^3-y_2^3$. As computed in the proof of \cref{lem:case3}, an antidiagonal $A$ inducing a reflection satisfies $p^3=q^3=\mu^2$ and $\nu^2=-\mu^2$. From $c_0=\mu^2c_0$ and $d_0=\nu^2d_0=-\mu^2d_0$ we conclude that $c_0$ and $d_0$ cannot both be nonzero, so exactly one of them vanishes, and by the previous permutation we may assume $d_0=0$ and $c_0\neq0$. Substituting $x_0\mapsto\lambda x_0$ multiplies $c_0$ by $\lambda^6$, so we may normalize $c_0=1$ and obtain $X_{2D_4}$.

The equations of $X_{2D_4}$ are in normal form with $a=b=0$, and $C$ and $D$ are coprime. The gradients are $(6x_0^5,3x_1^2,3x_2^2,2z_3,0)$ and $(0,3x_1^2,-3x_2^2,0,2z_4)$. If $z_4\neq0$, proportionality of the gradients forces the first one to vanish, hence $x_0=x_1=x_2=z_3=0$ and $g=z_4^2\neq0$. So $z_4=0$ at a point where the gradients are proportional, and $g=0$ reads $x_1^3=x_2^3$. If the second gradient is nonzero, the first must be proportional to it, which forces $x_0=z_3=0$, and then $f=0$ gives $x_1^3+x_2^3=0$, so $x_1=x_2=0$ and the point is the origin. Hence the second gradient vanishes, that is $x_1=x_2=0$, and $f=0$ gives $z_3^2=-x_0^6$ with $x_0\neq0$. Normalizing $x_0=1$ we obtain the two points of the statement, which are distinct in $\PW$. Near $(1:0:0:\sqrt{-1}:0)$ we work in the chart $x_0=1$. Since $\partial f/\partial z_3\neq0$ there, $\{f=0\}$ is smooth with local coordinates $x_1,x_2,z_4$, and $X_{2D_4}$ is cut out on it by $g=z_4^2+x_1^3-x_2^3$, where $x_1^3-x_2^3$ is a product of three pairwise independent linear forms. So the singularity is of type $D_4$. Hence $X_{2D_4}$ has only rational double points, and it is a canonical model by \cref{thm:catanese}(2).

We compute $\Aut(X_{2D_4})$. Since $a=b=0$ we have $|V|=4$. Since $c_0=1$, every element of the first kind satisfies $\mu^2=1$ and $C\circ A=C$. Since $d_0=0$, the only condition coming from $G$ is $D\circ A=\nu^2D$. If $A=\diag(t_1,t_2)$ then $C\circ A=t_1^3y_1^3+t_2^3y_2^3$, so $t_1^3=t_2^3=1$, and then $D\circ A=D$ and $\nu^2=1$. This gives nine matrices. If $A$ is antidiagonal with entries $p,q$ then $C\circ A=q^3y_2^3+p^3y_1^3$, so $p^3=q^3=1$, and then $D\circ A=-D$ and $\nu^2=-1$. This gives nine more. There is no element of the second kind, since $F\circ A=\mu^2G$ is impossible: the left hand side has a nonzero coefficient in $y_0^3$ and the right hand side does not. Hence $|B|=18$ and $|\Aut(X_{2D_4})|=4\cdot18=72$.

Let $\tau$ be given by $x_1\leftrightarrow x_2$, $z_3\mapsto z_3$ and $z_4\mapsto\sqrt{-1}\,z_4$. Then $\sigma_1,\sigma_2$ generate a group isomorphic to $(\ZZ/3\ZZ)^2$, $\tau$ has order four with $\tau^2=\varepsilon_2$, and $\tau$ exchanges $\sigma_1$ and $\sigma_2$. The element $\varepsilon_1$ is central and does not belong to the group generated by $\sigma_1,\sigma_2,\tau$, which has order $36$. Hence $\Aut(X_{2D_4})=((\ZZ/3\ZZ)^2\rtimes\langle\tau\rangle)\times\langle\varepsilon_1\rangle$.
\end{proof}

We now turn to the case in which $T_1$ has order three and there are elements of the second kind. The following lemma gives a common normal form for $C$ and $D$, and shows that the matrix of an element of the second kind is either diagonal or antidiagonal. These two possibilities are treated in \cref{prop:X2} and \cref{prop:family}, respectively.

\begin{lemma}
\label{lem:caseII}
Suppose $|T_1|=3$ and that there is an element of the second kind. Then $C$ and $D$ have three distinct roots each, and after a coordinate change and a rescaling of $f$ and $g$ we may write
\[C=y_1^3-\alpha y_2^3,\qquad D=y_1^3-\beta y_2^3,\qquad \alpha,\beta\in\Gm,\quad \alpha\neq\beta,\]
with $T_1$ generated by $\lambda\mapsto\zeta_3\lambda$, where $\lambda=y_1/y_2$. Moreover $c_0d_0\neq0$, and the matrix of an element of the second kind is diagonal or antidiagonal.
\end{lemma}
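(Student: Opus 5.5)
We work under the standing assumptions of this section: $X$ is not of degenerate type, $a=b=0$, $F=c_0y_0^3+C$, $G=d_0y_0^3+D$ with $(c_0,d_0)\neq(0,0)$, and $|T|=6$.

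The plan is to study the cyclic group $T_1$ of order three acting on $\PP^1$. A nontrivial element of finite order of $\PGL_2$ has exactly two fixed points. After a projective change of $(y_1,y_2)$ we put them at $0$ and $\infty$, so that $T_1$ is generated by $\lambda\mapsto\zeta_3\lambda$. The roots of $C$ form a $T_1$-stable set, and likewise for $D$; these two sets are disjoint.

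\textbf{Roots of $C$ and $D$.} An element $\varphi$ of the second kind exchanges the root sets of $C$ and $D$, so the two multisets of roots have the same shape.
\begin{enumerate}[(a)]
\item If $C=\ell^3$, then $D=m^3$ and $X$ would be of degenerate type, which is excluded.
\item If $C=\ell^2m$, then $T_1$ fixes the two points $\ell$ and $m$. So these are $0$ and $\infty$, and the same holds for $D$. This contradicts $\gcd(C,D)=1$.
\end{enumerate}
Hence $C$ and $D$ each have three distinct roots. Each root set is $T_1$-stable of size three. It cannot contain a fixed point: if it contained, say, $0$, the remaining two points would form a stable set of size two, on which an element of order three must act trivially, giving extra fixed points. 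So each root set is a free orbit $\{\lambda:\lambda^3=\alpha\}$. Rescaling $f$ and $g$ then gives $C=y_1^3-\alpha y_2^3$ and $D=y_1^3-\beta y_2^3$ with $\alpha,\beta\neq0$, and $\alpha\neq\beta$ by coprimality.

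\textbf{The matrix of $\varphi$.} Since $T_1$ is normal in $T$, the image of $\varphi$ in $\PGL_2$ normalizes $T_1=\langle\lambda\mapsto\zeta_3\lambda\rangle$. It therefore preserves the fixed-point set $\{0,\infty\}$, so it either fixes both points or swaps them. Consequently the matrix $A$ is diagonal or antidiagonal.

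\textbf{$c_0d_0\neq0$.} This is immediate from \eqref{eq:second}. The $y_0^3$ coefficients give $c_0=\mu^2d_0$ and $d_0=\nu^2c_0$. Since $(c_0,d_0)\neq(0,0)$, both coefficients are nonzero.

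\textbf{Main obstacle.} The only delicate point is making sure the normalizing coordinate change is legitimate. It is a $\GL_2$ change in $(y_1,y_2)$, which preserves the normal form and keeps $a=b=0$ as well as the shape $F=c_0y_0^3+C$, $G=d_0y_0^3+D$, so no unipotent correction is needed. Everything else is a direct check.
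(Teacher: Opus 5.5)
Your proposal is correct and follows essentially the same route as the paper: you normalize $T_1$ to $\lambda\mapsto\zeta_3\lambda$ and use the second-kind element to match the root multisets of $C$ and $D$, so that every non-free configuration either forces the degenerate type or contradicts coprimality. You then read off $c_0d_0\neq0$ from the $y_0^3$ coefficients in \eqref{eq:second} together with $(c_0,d_0)\neq(0,0)$, and obtain the diagonal/antidiagonal shape because the image of a second-kind element normalizes $T_1$ and so preserves $\{0,\infty\}$; the only differences are cosmetic (you split by root multiplicities instead of the paper's ``free orbit or supported on $\{0,\infty\}$'' dichotomy, and you use the fixed-point set instead of computing the normalizer directly).
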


\begin{proof}
Normalize $T_1=\langle\lambda\mapsto\zeta_3\lambda\rangle$, whose fixed points are $0$ and $\infty$. The set of roots of $C$ is a $T_1$-stable multiset of size three, so it is either a free orbit, consisting of three distinct points, or supported on $\{0,\infty\}$, in which case $C$ has a root of multiplicity at least two. An element of the second kind maps the multiset of roots of $C$ onto that of $D$, so the two multisets have the same multiplicities. If both were supported on $\{0,\infty\}$ then, being coprime, $C$ and $D$ would be $y_1^3$ and $y_2^3$ up to scalars and $X$ would be of degenerate type. Hence both are free orbits and $C,D$ have the asserted shape after rescaling $y_1$ and the equations.

An element of the second kind satisfies $c_0=\mu^2d_0$ and $d_0=\nu^2c_0$, so $c_0$ and $d_0$ are both zero or both nonzero, and the first case is excluded by \cref{lem:case3}. Its image in $\PGL_2$ normalizes $T_1$, and a direct computation shows that the normalizer of $\langle\lambda\mapsto\zeta_3\lambda\rangle$ in $\PGL_2$ consists of the maps $\lambda\mapsto t\lambda$ and $\lambda\mapsto t/\lambda$, which are induced by diagonal and antidiagonal matrices respectively.
\end{proof}

\begin{proposition}
\label{prop:X2}
Suppose $|T_1|=3$ and that some element of the second kind has diagonal matrix. Then $X$ is isomorphic to
\[X_{3A_2}\colon\quad z_3^2+x_0^6+x_1^3-x_2^3=0,\qquad z_4^2+x_0^6+x_1^3+x_2^3=0.\]
The surface $X_{3A_2}$ is a canonical model whose singular locus consists of the three points $(1:\eta:0:0:0)$ with $\eta^3=-1$, all of type $A_2$, and $\Aut(X_{3A_2})\simeq(\ZZ/3\ZZ)^2\times \mathbf{D}_4$.
\end{proposition}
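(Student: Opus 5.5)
We work in the normal form of \cref{lem:caseII}: $F=c_0y_0^3+y_1^3-\alpha y_2^3$ and $G=d_0y_0^3+y_1^3-\beta y_2^3$, with $c_0d_0\neq0$, $\alpha\neq\beta$ and $a=b=0$. The proof has three parts: pin down the coefficients, check the singularities, and compute the group.

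\emph{Normalizing the coefficients.} Let $\varphi$ be an element of the second kind with diagonal matrix $A=\diag(p,q)$. The conditions \eqref{eq:second} compare $F\circ A=c_0y_0^3+p^3y_1^3-\alpha q^3y_2^3$ with $\mu^2G$, and $G\circ A$ with $\nu^2F$. The $y_1^3$ coefficients give $p^3=\mu^2=\nu^2$. The $y_0^3$ coefficients give $c_0=\mu^2d_0$ and $d_0=\nu^2c_0$, hence $\mu^2\nu^2=1$ and so $\mu^2=\pm1$. The $y_2^3$ coefficients give $\alpha q^3=\mu^2\beta$ and $\beta q^3=\nu^2\alpha$. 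Multiplying these yields $q^6=\mu^4=1$. Dividing them, and using $\mu^2=\nu^2$, yields $\alpha^2=\beta^2$, so $\beta=-\alpha$ because $\alpha\neq\beta$. Then $q^3=-\mu^2$, and $c_0=\mu^2d_0$.

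Rescaling $x_2$ so that $\alpha=1$, and rescaling $x_0$, we would like to reach $c_0=d_0=1$. If $\mu^2=1$ this is immediate. If $\mu^2=-1$ then $c_0=-d_0$, and I expect to remedy this with the substitution $x_1\mapsto -x_1$ (or with $z\mapsto$ multiples of $z$ and a rescaling of the equations). This sign bookkeeping is the main place where care is needed: one must check that $\mu^2=-1$ either contradicts the other conditions or can be absorbed by a coordinate change while keeping $C=y_1^3-y_2^3$, $D=y_1^3+y_2^3$. In the latter case we arrive at $X_{3A_2}$, possibly after exchanging the names of $f$ and $g$.

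\emph{Singularities.} As in \cref{prop:X1}, we look for points where the gradients
\[(6x_0^5,3x_1^2,-3x_2^2,2z_3,0),\qquad (6x_0^5,3x_1^2,3x_2^2,0,2z_4)\]
are proportional. The last two coordinates force $z_3=z_4=0$, since otherwise one gradient vanishes, which is impossible. Proportionality of the remaining entries then forces $x_2=0$. The equations reduce to $x_0^6+x_1^3=0$ with $x_0\neq0$, which gives the three points $(1:\eta:0:0:0)$ with $\eta^3=-1$. At such a point, subtracting the two equations produces the smooth hypersurface $z_3^2-z_4^2-2x_2^3$. In local coordinates we get $z_3^2-z_4^2=2x_2^3$, a singularity of type $A_2$. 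By \cref{thm:catanese}(2), $X_{3A_2}$ is therefore a canonical model.

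\emph{Automorphism group.} We solve \eqref{eq:first} and \eqref{eq:second} directly.

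For the first kind, the $y_0^3$ terms force $\mu^2=\nu^2=1$. The matrix $A$ preserves $\{0,\infty\}$, so it is diagonal or antidiagonal. An antidiagonal $A$ would send $y_1^3-y_2^3$ to a multiple of $y_2^3-y_1^3$, forcing $\mu^2=-1$, so $A$ is diagonal with $t_1^3=t_2^3=1$. This gives $9$ matrices, and with $|V|=4$ we get $|\Aut_1|=36$.

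For the second kind, $\varphi_0\colon x_2\mapsto\zeta x_2$ with $\zeta^3=-1$, $z_3\leftrightarrow z_4$, is an automorphism. Hence the group has order $72$.

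For the structure, $\sigma_1,\sigma_2$ generate a central $(\ZZ/3\ZZ)^2$, since all matrices are diagonal and $\mu,\nu$ are signs. The complementary group is generated by $\varepsilon_1,\varepsilon_2$ and $\psi\colon x_2\mapsto -x_2$, $z_3\leftrightarrow z_4$. Here $\psi^2=1$, and $\psi$ conjugates $\varepsilon_1$ to $\varepsilon_2$, which gives $(\ZZ/2\ZZ)^2\rtimes\ZZ/2\ZZ\simeq\mathbf{D}_4$. Because $\sigma$ commutes with $\psi$ up to $\sigma_2^{-1}$, we adjust: $\psi$ acts on $x_2$ by $-1$ and commutes with $\sigma_1,\sigma_2$. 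This yields $(\ZZ/3\ZZ)^2\times\mathbf{D}_4$.
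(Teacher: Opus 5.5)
\textbf{The gap is in the normalization step, at the point you flagged, and the remedies you suggest cannot close it.} The case $\mu^2=-1$ is not contradictory. For example, $z_3^2-x_0^6+x_1^3-x_2^3=z_4^2+x_0^6+x_1^3+x_2^3=0$ satisfies the hypotheses. It has $|T_1|=3$, and it has the second-kind automorphism $x_1\mapsto-x_1$, $z_3\mapsto\sqrt{-1}\,z_4$, $z_4\mapsto\sqrt{-1}\,z_3$, whose matrix is diagonal and whose scalars satisfy $\mu^2=\nu^2=-1$. So this case has to be absorbed by a coordinate change.

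None of the operations you propose can do that. Normalize the $y_1^3$-coefficients of $C$ and $D$ to $1$, and consider the ratio $c_0/d_0$, which is the invariant $I_2$ of \cref{prop:moduli}. It is unchanged by every diagonal change of $x_0,x_1,x_2$, including $x_1\mapsto-x_1$. It is also unchanged by rescaling $z_3,z_4$ and the two equations. Exchanging $f$ and $g$ only inverts it. Since $(-1)^{-1}=-1$, these operations never turn $c_0=-d_0$ into $c_0=d_0$.

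What is needed is an antidiagonal coordinate change, as in the paper. In your normalization $C=y_1^3-y_2^3$, $D=y_1^3+y_2^3$, the swap $x_1\leftrightarrow x_2$ replaces $C$ by $-C$ and fixes $D$. Then multiply $f$ by $-1$ and replace $z_3$ by $\sqrt{-1}\,z_3$. This restores the normal form with $x_0^6$-coefficients $(-c_0,d_0)=(d_0,d_0)$, and rescaling $x_0$ gives $X_{3A_2}$. With this step added, no exchange of $f$ and $g$ is needed in your normalization.

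\textbf{Two smaller slips in the singularity check.} First, the hypersurface $\{z_3^2-z_4^2-2x_2^3=0\}$ is not smooth at $(1:\eta:0:0:0)$. What is smooth there is $\{f=0\}$, because $\partial f/\partial x_1=3\eta^2\neq0$. In the chart $x_0=1$ it has local coordinates $x_2,z_3,z_4$, and $X$ is cut out on it by $f-g=z_3^2-z_4^2-2x_2^3$. That is what gives the $A_2$ singularity. Second, once $z_3=z_4=0$, proportionality of the gradients does not force $x_2=0$ when $x_0=x_1=0$. The cleanest route is $f-g=-2x_2^3$.

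\textbf{The rest is correct.} Counting $\Aut_1$ and exhibiting one element of the second kind, then using $[\Aut:\Aut_1]\leq2$, is a valid shortcut. The decomposition $\langle\sigma_1,\sigma_2\rangle\times\langle V,\psi\rangle$ is right, except that the sentence about $\sigma_2^{-1}$ should simply say that $\psi$ commutes with $\sigma_1$ and $\sigma_2$.
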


\begin{proof}
Keep the notation of \cref{lem:caseII} and let $A=\diag(t_1,t_2)$ be the matrix of an element of the second kind. Comparing the coefficients of $y_1^3$ and of $y_2^3$ in $F\circ A=\mu^2G$ and in $G\circ A=\nu^2F$ gives
\[t_1^3=\mu^2,\qquad \alpha t_2^3=\mu^2\beta,\qquad t_1^3=\nu^2,\qquad \beta t_2^3=\nu^2\alpha.\]
Hence $\mu^2=\nu^2$ and $\alpha^2=\beta^2$, that is $\alpha=-\beta$. Rescaling $y_2$ we may assume $\beta=1$ and $\alpha=-1$. From $c_0=\mu^2d_0$ and $d_0=\nu^2c_0=\mu^2c_0$ we get $\mu^4=1$, hence $c_0=\pm d_0$, and rescaling $y_0$ we may take $d_0=1$ and $c_0=\pm1$. If $c_0=-1$, the coordinate change $y_1\leftrightarrow y_2$ fixes $C$ and replaces $D$ by $-D$. Multiplying $g$ by $-1$ and replacing $z_4$ by $\sqrt{-1}\,z_4$ replaces $(c_0,d_0)=(-1,1)$ by $(-1,-1)$, and rescaling $y_0$ by a cube root of $-1$ then gives $(1,1)$. So we may assume $c_0=d_0=1$, and after exchanging the names of $C$ and $D$ we obtain $X_{3A_2}$.

The gradients of $X_{3A_2}$ are $(6x_0^5,3x_1^2,-3x_2^2,2z_3,0)$ and $(6x_0^5,3x_1^2,3x_2^2,0,2z_4)$. If $z_3z_4\neq0$ they are independent. If $z_3=0$ and $z_4\neq0$, proportionality forces the first gradient to vanish, hence $x_0=x_1=x_2=0$ and $g=z_4^2\neq0$. The case $z_3\neq0$, $z_4=0$ is symmetric. So $z_3=z_4=0$, and subtracting the equations gives $x_2=0$ and $x_1^3=-x_0^6$. There the two gradients coincide, so these points are singular, and normalizing $x_0=1$ we obtain the three points of the statement. Near such a point $\partial f/\partial x_1\neq0$, so $\{f=0\}$ is smooth with local coordinates $x_2,z_3,z_4$, and $X_{3A_2}$ is cut out on it by $g-f=z_4^2-z_3^2+2x_2^3$, an $A_2$ singularity. Hence $X_{3A_2}$ is a canonical model.

For the automorphism group, $|V|=4$ since $a=b=0$. Since $c_0=d_0=1$, every element of the first kind satisfies $\mu^2=\nu^2=1$, hence $C\circ A=C$ and $D\circ A=D$. For $A=\diag(t_1,t_2)$ this gives $t_1^3=t_2^3=1$, nine matrices. For $A$ antidiagonal with entries $p,q$ we get $C\circ A=q^3y_2^3-p^3y_1^3=C$, so $p^3=q^3=-1$, and then $D\circ A=-D$, so there is none. For an element of the second kind we need $\mu^2=\nu^2=1$, $C\circ A=D$ and $D\circ A=C$. For $A=\diag(t_1,t_2)$ this reads $t_1^3=1$ and $t_2^3=-1$, giving nine matrices, and for $A$ antidiagonal there is none. Hence $|B|=18$ and $|\Aut(X_{3A_2})|=72$. Let $\varphi$ be the automorphism $x_2\mapsto-x_2$, $z_3\leftrightarrow z_4$, which exchanges $f$ and $g$ and has order two. Then $\varphi$ commutes with $\sigma_1$ and $\sigma_2$ and exchanges $\varepsilon_1$ and $\varepsilon_2$, so $\langle V,\varphi\rangle$ is dihedral of order eight and $\Aut(X_{3A_2})=\langle\sigma_1,\sigma_2\rangle\times\langle V,\varphi\rangle$.
\end{proof}

\begin{proposition}
\label{prop:family}
Suppose $|T_1|=3$ and that every element of the second kind has antidiagonal matrix. Then $X$ is isomorphic to a member of the family
\[X_r\colon\quad z_3^2+rx_0^6+x_1^3-r^2x_2^3=0,\qquad z_4^2+x_0^6+x_1^3-x_2^3=0,\qquad r\in\Gm\smallsetminus\{\pm1\}.\]
Every $X_r$ is smooth, and $\Aut(X_r)\simeq(\ZZ/6\ZZ\times\ZZ/6\ZZ)\rtimes\ZZ/2\ZZ$ has order $72$, the group of order two exchanging the two factors.
\end{proposition}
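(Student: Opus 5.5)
Keep the notation of \cref{lem:caseII}, so $C=y_1^3-\alpha y_2^3$, $D=y_1^3-\beta y_2^3$ with $\alpha\neq\beta$, $c_0d_0\neq0$, $a=b=0$, and let $A$ be the antidiagonal matrix, with entries $p,q$ (so $y_1\mapsto py_2$, $y_2\mapsto qy_1$), of an element of the second kind. The plan has four steps: normalize the equations, check smoothness, compute the automorphism group, and check that the parameter is in range.

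\emph{Normalization.} We compare the coefficients of $y_1^3$, $y_2^3$ and $y_0^3$ in $F\circ A=\mu^2G$ and $G\circ A=\nu^2F$. From $C\circ A=-\alpha q^3y_1^3+p^3y_2^3$ this gives
\[-\alpha q^3=\mu^2,\quad p^3=-\mu^2\beta,\quad -\beta q^3=\nu^2,\quad p^3=-\nu^2\alpha,\quad c_0=\mu^2d_0,\quad d_0=\nu^2c_0.\]
Hence $\mu^2\nu^2=1$, $\alpha\mu^2=\beta\nu^2$ and $\alpha/\beta=\nu^2/\mu^2=\nu^4$. We then use the remaining freedom:
\begin{itemize}
\item rescale $y_0$ (by $\lambda^2$, with $x_0\mapsto\lambda x_0$) so that $d_0=1$;
\item rescale $y_2$ so that $\beta=1$;
\item rescale $g$ together with $z_4$ to keep the normal form.
\end{itemize}
Then $c_0=\mu^2=:r$ and $\alpha=\nu^4=r^{-2}$. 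Rescaling $f$ and $z_3$ should bring $\alpha$ to $r^2$, i.e.\ change the relative weights; I expect some bookkeeping here to land exactly on $z_3^2+rx_0^6+x_1^3-r^2x_2^3$, possibly after replacing $r$ by $r^{-1}$. The condition $r\neq\pm1$ comes out of the classification rather than being imposed: $r=\pm1$ would give $\alpha=\beta$, contradicting coprimality, or (for the other sign) a diagonal element of the second kind, contradicting the hypothesis.

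\emph{Smoothness.} The gradients are $(6rx_0^5,3x_1^2,-3r^2x_2^2,2z_3,0)$ and $(6x_0^5,3x_1^2,-3x_2^2,0,2z_4)$, and we argue as in \cref{prop:X2}.
\begin{itemize}
\item If $z_3z_4\neq0$, the gradients are independent.
\item If exactly one of $z_3,z_4$ vanishes, proportionality forces $x_0=x_1=x_2=0$, which contradicts the equations.
\item If $z_3=z_4=0$, proportionality of the $(x_0,x_1,x_2)$-parts $(rx_0^5,x_1^2,-r^2x_2^2)$ and $(x_0^5,x_1^2,-x_2^2)$, combined with $r\neq\pm1$ and the two equations, forces at least two of $x_0,x_1,x_2$ to vanish. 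The equations then force the third to vanish as well.
\end{itemize}
By \cref{lem:jacobian}, $X_r$ is smooth.

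\emph{Automorphism group.} For the first kind we have $\mu^2=\nu^2=1$ because $c_0d_0\neq0$, so $C\circ A=C$ and $D\circ A=D$. Diagonal $A$ gives the nine matrices with $t_i^3=1$. An antidiagonal $A$ would have to satisfy $-r^2q^3=1$, $p^3=-r^2$, $q^3=-1$ and $p^3=-1$, forcing $r^2=1$, so there is none. Hence $|B_1|=9$, $|V|=4$, and $|\Aut_1|=36$. The second kind contributes another $36$ (the antidiagonal solutions of the displayed equations), so $|\Aut(X_r)|=72$.

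For the structure, we set $\gamma_1=\sigma_1\varepsilon_1$ and $\gamma_2=\sigma_2\varepsilon_2$, adjusted so that the two factors are $\langle\sigma_1,\varepsilon_?\rangle\cong\ZZ/6\ZZ$. We then exhibit an explicit second-kind involution, say $x_1\mapsto\rho x_2$, $x_2\mapsto\rho' x_1$, $z_3\leftrightarrow\kappa z_4$ with suitable roots, and check that it has order two and conjugates one $\ZZ/6\ZZ$ onto the other. The main obstacle is finding the correct pairing of $\sigma$'s with $\varepsilon$'s and an involution (rather than an element of order four) of the second kind. If that fails, I would identify the group abstractly as the unique extension with these orders and check directly that it is isomorphic to $(\ZZ/6\ZZ)^2\rtimes\ZZ/2\ZZ$.
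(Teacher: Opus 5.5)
Your overall route is the paper's: compare coefficients under an antidiagonal element of the second kind, normalize, check smoothness through \cref{lem:jacobian}, then count. However, the normalization step contains an algebra error, and the repair you sketch for it cannot work.

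From your own equations $p^3=-\mu^2\beta$ and $p^3=-\nu^2\alpha$ (equivalently, from $-\alpha q^3=\mu^2$ and $-\beta q^3=\nu^2$) you get $\mu^2\beta=\nu^2\alpha$, not $\alpha\mu^2=\beta\nu^2$. Together with $\mu^2\nu^2=1$ this gives $\alpha/\beta=\mu^2/\nu^2=\mu^4$. After normalizing $d_0=\beta=1$ and setting $r=\mu^2$, you get $c_0=r$ and $\alpha=r^2$, which is exactly $X_r$ with no further bookkeeping. The condition $r\neq\pm1$ is then just $\alpha\neq\beta$: both signs give $C=D$, and there is no ``other sign'' case involving diagonal elements.

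The fix you propose (rescaling $f$ and $z_3$, possibly replacing $r$ by $r^{-1}$) would fail. Every diagonal rescaling of $y_0,y_1,y_2$, of the equations and of $z_3,z_4$ leaves $I_1=\alpha/\beta$ and $I_2=c_0d/(cd_0)$ unchanged. Your values give $(I_1,I_2)=(r^{-2},r)$, and none of the four moves of \cref{prop:moduli} brings this to the shape $(s^2,s)$ of some $X_s$ unless $r^4=1$. In fact the surface with $c_0=r$, $\alpha=r^{-2}$ has no antidiagonal element of the second kind when $r^4\neq1$. So the slip contradicts the hypothesis rather than calling for extra normalization.

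The structural part is left open, but it closes at once, as in the paper. Take $\mu^2=r$ and $\nu=\mu^{-1}$. After normalization $p^3q^3=1$ (in your convention $p^3=-r$, $q^3=-r^{-1}$), so you can take $pq=1$. Then $\varphi^2$ acts by $pq$ on $y_1,y_2$ and by $\mu\nu$ on $z_3,z_4$, so $\varphi^2=1$. Conjugation by $\varphi$ exchanges $\sigma_1\leftrightarrow\sigma_2$ and $\varepsilon_1\leftrightarrow\varepsilon_2$, so your first choice $\gamma_j=\sigma_j\varepsilon_j$ already works and needs no adjustment.

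Your fallback, ``the unique extension with these orders,'' is not an argument as stated, since the orders do not determine the group. What is needed is an involution in the second-kind coset, and the explicit $\varphi$ provides it. That same $\varphi$ shows every $X_r$ has an element of the second kind. The second-kind elements then form the nontrivial coset of $\Aut_1(X_r)$ and number $36$, with no need to solve for them or to exclude diagonal ones separately.

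Finally, before applying \cref{lem:jacobian} and \cref{prop:autX} to an arbitrary $X_r$, note that $X_r$ avoids the singular locus of $\PW$, because $C$ and $D$ are coprime for $r^2\neq1$. Then $X_r$ is a canonical model by \cref{thm:catanese}(2). With that in place, your smoothness argument (proportionality forces two of $x_0,x_1,x_2$ to vanish, then the equations kill the third) is correct.
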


\begin{proof}
Keep the notation of \cref{lem:caseII} and let the element of the second kind be given by the antidiagonal transformation $y_1\mapsto qy_2$, $y_2\mapsto py_1$. From $F\circ A=\mu^2G$ and $G\circ A=\nu^2F$ we get
\[c_0=\mu^2d_0,\qquad -\alpha p^3=\mu^2,\qquad q^3=-\mu^2\beta,\qquad d_0=\nu^2c_0,\qquad -\beta p^3=\nu^2,\qquad q^3=-\nu^2\alpha.\]
The two expressions for $q^3$ give $\mu^2\beta=\nu^2\alpha$, and the first and fourth equations give $\mu^2\nu^2=1$, so $\mu^4=\alpha/\beta$ and $c_0/d_0=\mu^2$. Rescaling $y_2$ we may take $\beta=1$, and rescaling $y_0$ we may take $d_0=1$. Setting $r=\mu^2$ we obtain $\alpha=r^2$ and $c_0=r$, which is $X_r$. Here $r\neq0$, and $\alpha\neq\beta$ gives $r\neq\pm1$. Conversely, given $r\in\Gm\smallsetminus\{\pm1\}$, choose $\mu$ with $\mu^2=r$, put $\nu=\mu^{-1}$, and choose $p,q$ with $p^3=-r^{-1}$, $q^3=-r$ and $pq=1$. The resulting graded automorphism is an element of the second kind of $X_r$.

We check that $X_r$ is smooth. The gradients of $f$ and $g$ are
\[\nabla f=(6rx_0^5,3x_1^2,-3r^2x_2^2,2z_3,0),\qquad \nabla g=(6x_0^5,3x_1^2,-3x_2^2,0,2z_4).\]
 If $z_4\neq0$ then proportionality forces the first gradient to vanish, hence $x_0=x_1=x_2=z_3=0$ and $g=z_4^2\neq0$. The case $z_3\neq0$ is symmetric. So $z_3=z_4=0$, and writing $\xi=x_0^6$, $\eta=x_1^3$, $\zeta=x_2^3$ the equations read $r\xi+\eta-r^2\zeta=0$ and $\xi+\eta-\zeta=0$, whence $\xi=(r+1)\zeta$ and $\eta=-r\zeta$, as $r\neq1$. If the gradients are proportional and $x_1\neq0$, the second coordinate gives the factor of proportionality $1$, and the first then gives $6rx_0^5=6x_0^5$, so $x_0=0$, hence $\zeta=0$ because $r\neq-1$, hence $\eta=0$, a contradiction. So $x_1=0$, hence $\zeta=0$ because $r\neq0$, hence $\xi=0$, and the point is the origin. Therefore $X_r$ is smooth, and it lies in the smooth locus of $\PW$ since $C$ and $D$ are coprime for $r^2\neq1$. By \cref{thm:catanese}(2) it is a canonical model.

We compute $\Aut(X_r)$. Since $c_0=r$ and $d_0=1$ are nonzero, every element of the first kind has $\mu^2=\nu^2=1$ and $C\circ A=C$, $D\circ A=D$. For $A=\diag(s,u)$ this reads $s^3=u^3=1$, giving nine matrices and, with the four choices of $(\mu,\nu)$, thirty-six elements of the first kind. For $A$ antidiagonal with entries $p,q$ we would get $-\alpha p^3=1$ and $-\beta p^3=1$, hence $\alpha=\beta$, which is excluded. By the first paragraph there are elements of the second kind, and the same count gives thirty-six of them. Hence $|\Aut(X_r)|=72$. Let $\varphi$ be the element of the second kind constructed above, with $pq=1$ and $\nu=\mu^{-1}$. Then $\varphi^2$ acts by $y_i\mapsto pqy_i$, $z_3\mapsto\mu\nu z_3$ and $z_4\mapsto\mu\nu z_4$, so $\varphi^2=1$, and conjugation by $\varphi$ exchanges $\sigma_1$ with $\sigma_2$ and $\varepsilon_1$ with $\varepsilon_2$. Hence $\langle\sigma_1,\varepsilon_1\rangle$ and $\langle\sigma_2,\varepsilon_2\rangle$ are two commuting cyclic groups of order six exchanged by $\varphi$, and $\Aut(X_r)$ is their product extended by $\langle\varphi\rangle$.
\end{proof}

It remains to determine which of the surfaces $X_{2D_4}$, $X_{3A_2}$ and $X_r$ are isomorphic. For this we attach to each surface of the type of \cref{lem:caseII} two quantities, and we show that an isomorphism can only change them in four explicit ways.

\begin{proposition}
\label{prop:moduli}
Let $X$ and $X'$ be two surfaces of the type treated in \cref{lem:caseII}, with data
\[F=c_0y_0^3+c\,(y_1^3-\alpha y_2^3),\qquad G=d_0y_0^3+d\,(y_1^3-\beta y_2^3)\]
and similarly for $X'$. Then the quantities
\[I_1=\frac{\alpha}{\beta},\qquad I_2=\frac{c_0d}{c\,d_0}\]
are defined and nonzero, and every isomorphism $X\to X'$ carries $(I_1,I_2)$ to one of
\[(I_1,I_2),\qquad\Big(\frac1{I_1},\frac{I_2}{I_1}\Big),\qquad\Big(\frac1{I_1},\frac1{I_2}\Big),\qquad\Big(I_1,\frac{I_1}{I_2}\Big).\]
Consequently $X_r$ and $X_{r'}$ are isomorphic if and only if $r'=r^{\pm1}$, and no $X_r$ is isomorphic to $X_{2D_4}$ or to $X_{3A_2}$.
\end{proposition}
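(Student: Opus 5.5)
By \cref{thm:catanese}(4) and the proof of \cref{prop:autX}, every isomorphism $X\to X'$ is induced by a graded automorphism of the form described there. It carries the ideal $(f',g')$ to $(f,g)$, so the pair of degenerate members $\{z_3^2,z_4^2\}$ of the pencil is preserved or exchanged. We first check that $I_1,I_2$ are defined and nonzero. We have $\alpha,\beta\neq0$ by \cref{lem:caseII}, and $c,d\neq0$ because $C,D\neq0$. Also $c_0d_0\neq0$: for $X_r$, $X_{3A_2}$ this is explicit, and in the setting of \cref{lem:caseII} it is proved there.

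Next we reduce to block-diagonal matrices. An isomorphism maps the roots of $C'$ and $D'$ onto those of $C$ and $D$, possibly swapped, and conjugates the groups $T_1$. Both groups are normalized as $\langle\lambda\mapsto\zeta_3\lambda\rangle$, so the induced element of $\PGL_2$ lies in the normalizer of this group. By the computation at the end of \cref{lem:caseII}, the matrix is therefore diagonal or antidiagonal. Conjugating by $U_1$ as in \cref{prop:autX} may move the unipotent part, but since both sides have $L=L'=Q=Q'=0$, I expect a direct comparison of the $y_0^2$ and $y_0$ coefficients to force the unipotent part to vanish. This step needs care, and I will check it by expanding $F\circ M$.

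There are then four cases: the transformation is of the first or second kind, with diagonal or antidiagonal $A$. In each case I compare the coefficients of $y_0^3$, $y_1^3$ and $y_2^3$ exactly as in the proofs of \cref{prop:X2,prop:family}.
\begin{itemize}
\item First kind, diagonal $A=\diag(s,u)$: we get $c_0'=\mu^{-2}c_0$, $c'=\mu^{-2}s^3c$ and $\alpha'=\alpha u^3/s^3$, and similarly for $G$. Hence $I_1'=I_1$ and $I_2'=I_2$.
\item First kind, antidiagonal $A$: the roles of $y_1,y_2$ are swapped, so $\alpha\mapsto1/\alpha$ up to the common factor, and $c\mapsto-c\alpha$. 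This gives $(1/I_1,\,I_2/I_1)$.
\item Second kind, diagonal $A$: $F$ and $G$ are exchanged, giving $(1/I_1,1/I_2)$.
\item Second kind, antidiagonal $A$: composing the two previous moves gives $(I_1,I_1/I_2)$.
\end{itemize}
Since the quantities are invariant under rescaling $x_0$, $y_1$, $y_2$ and the equations, these computations are mechanical.

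For the consequences, $X_r$ has $\alpha=r^2$, $\beta=1$, $c_0=r$, $c=d=d_0=1$, so $(I_1,I_2)=(r^2,r)$. The four images are $(r^2,r)$, $(r^{-2},r^{-1})$, $(r^{-2},r^{-1})$ and $(r^2,r)$. So $X_r\simeq X_{r'}$ forces $r'=r^{\pm1}$, and the isomorphism $X_r\simeq X_{r^{-1}}$ is realized by the antidiagonal first-kind substitution combined with rescalings. For $X_{3A_2}$ the invariants are $(-1,1)$, and $r^2=-1$ with $r=1$ is impossible, as is $r^2=-1$, $r=-1$ among the other images. Hence $X_{3A_2}\not\simeq X_r$, which is also clear because $X_r$ is smooth and $X_{3A_2}$ is not. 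Likewise $X_{2D_4}$ is singular, or has $|T_1|=6$, so it is not isomorphic to any $X_r$. Smoothness versus singularity is the cleanest argument for both of these last claims. The main obstacle is the unipotent reduction in the second paragraph.
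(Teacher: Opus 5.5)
Your proposal is correct and follows the paper's proof. The normalizer of $T_1$ forces $A$ to be diagonal or antidiagonal, and your four cases are exactly the paper's four compositions. The step you flag as the main obstacle is immediate: writing $y_i\mapsto\rho_iy_i+e_iy_0$ in the diagonal case, the pullback of $F'$ has $y_0y_1^2$ and $y_0y_2^2$ coefficients $3c'\rho_1^2e_1$ and $-3c'\alpha'\rho_2^2e_2$. These must vanish because $F$ and $G$ contain no such monomials, and the antidiagonal case is symmetric.

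The only divergence is at the end. You separate $X_r$ from $X_{2D_4}$ and $X_{3A_2}$ by smooth versus singular, using \cref{prop:X1,prop:X2,prop:family}. This is valid and more direct than the paper's route, which uses the relation $I_1=I_2^2$ for $X_{3A_2}$ and the index of $\Aut_1$ in $\Aut$ for $X_{2D_4}$.
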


\begin{proof}
The quantities are defined because $c_0d_0\neq0$ by \cref{lem:caseII} and $c,d,\alpha,\beta$ are nonzero. By \cref{thm:catanese}(4) an isomorphism $X\to X'$ is induced by a graded automorphism
\[x_0\mapsto\lambda x_0,\qquad y_0\mapsto\lambda^2y_0,\qquad y_i\mapsto (Ay)_i+e_iy_0\ (i=1,2),\]
with $A\in\GL_2$ and $e_1,e_2\in\CC$, together with either $z_3\mapsto\mu z_3$, $z_4\mapsto\nu z_4$ or $z_3\mapsto\mu z_4$, $z_4\mapsto\nu z_3$. It conjugates $\Aut_1(X)$ onto $\Aut_1(X')$, hence $T_1$ onto $T_1'$. After the normalization of \cref{lem:caseII} both are generated by $\lambda\mapsto\zeta_3\lambda$, so the class of $A$ in $\PGL_2$ normalizes that group and $A$ is diagonal or antidiagonal.

We show that $e_1=e_2=0$. Suppose $A=\diag(\rho_1,\rho_2)$. The antidiagonal case is identical after exchanging the two variables. The pullback of $F'$ under this graded automorphism is
\[c_0'\lambda^6y_0^3+c'\big((\rho_1y_1+e_1y_0)^3-\alpha'(\rho_2y_2+e_2y_0)^3\big),\]
whose coefficients of $y_0y_1^2$ and of $y_0y_2^2$ are $3c'\rho_1^2e_1$ and $-3c'\alpha'\rho_2^2e_2$. This pullback is proportional to $F$ or to $G$, neither of which involves these monomials, so $e_1=e_2=0$.

So the isomorphism is the composition of a coordinate change $y_0\mapsto\gamma y_0$, $y_1\mapsto\rho_1y_1$, $y_2\mapsto\rho_2y_2$ with rescalings of the two equations by $\kappa_1,\kappa_2$, possibly followed by the antidiagonal coordinate change $y_1\mapsto qy_2$, $y_2\mapsto py_1$, and possibly by the exchange of $f$ and $g$. Under the first ones,
\[c_0\mapsto c_0\gamma^3/\kappa_1,\quad c\mapsto c\rho_1^3/\kappa_1,\quad \alpha\mapsto\alpha\rho_2^3/\rho_1^3,\quad d_0\mapsto d_0\gamma^3/\kappa_2,\quad d\mapsto d\rho_1^3/\kappa_2,\quad \beta\mapsto\beta\rho_2^3/\rho_1^3,\]
so $I_1$ and $I_2$ are unchanged. Under the antidiagonal coordinate change one computes $c\mapsto-c\alpha p^3$ and $\alpha\mapsto q^3/(\alpha p^3)$, and similarly for $d,\beta$, while $c_0$ and $d_0$ are unchanged. This gives $I_1\mapsto1/I_1$ and $I_2\mapsto I_2/I_1$. Exchanging $f$ and $g$ exchanges the triples $(c_0,c,\alpha)$ and $(d_0,d,\beta)$, so $I_1\mapsto1/I_1$ and $I_2\mapsto1/I_2$. The four displayed possibilities are the four compositions.

For $X_r$ we have $(I_1,I_2)=(r^2,r)$, and the four possibilities give $(r^{\pm2},r^{\pm1})$. Hence $X_r\simeq X_{r'}$ forces $r'=r^{\pm1}$, and conversely $X_r\simeq X_{1/r}$ through the antidiagonal coordinate change. For $X_{3A_2}$ we have $(I_1,I_2)=(-1,1)$. The relation $I_1=I_2^2$ is preserved by the four maps and holds for every $X_r$, so no $X_r$ is isomorphic to $X_{3A_2}$. Finally, $\Aut_1(X_{2D_4})=\Aut(X_{2D_4})$ whereas $\Aut_1(X)$ has index two in $\Aut(X)$ for $X_{3A_2}$ and for every $X_r$. Since $\Aut_1$ does not depend on the normal form, $X_{2D_4}$ is not isomorphic to any of the others.
\end{proof}

This concludes the proof of \cref{thm:A}. Indeed, since $\Aut(S)=\Aut(X)$, the bound $|\Aut(S)|\leq72$ is \cref{thm:bound}. If equality holds, then either $T=T_1$, or $T_1$ has order three and there are elements of the second kind, whose matrices are diagonal or antidiagonal by \cref{lem:caseII}. In these three cases $X$ is isomorphic to $X_{2D_4}$, to $X_{3A_2}$ or to a member of the family $X_r$, by \cref{prop:X1,prop:X2,prop:family} respectively. Conversely, these propositions show that all these surfaces are canonical models with the singularities and the automorphism groups of order $72$ stated in \cref{thm:A}. Finally, by \cref{prop:moduli} the only isomorphisms among them are $X_r\simeq X_{r^{-1}}$.

\section{The Griffiths basis of $X_r$ and the intersection form}
\label{sec:griffiths}
%----------------------------------------------------------------------

We now describe the primitive cohomology of the surfaces $X_r$ of \cref{prop:family}. Using the Cayley trick, we construct a basis of $\operatorname{H}^2(X_r,\CC)_\prim$ consisting of eigenvectors of a subgroup $G\simeq\ZZ/6\ZZ\times\ZZ/6\ZZ$ of $\Aut(X_r)$ and formed by residues of explicit polynomials, most of them monomials. This is the analogue of the Griffiths basis of the primitive cohomology of a hypersurface. We then compute the intersection form in this basis, and we use the action of $G$ to decompose the rational primitive cohomology into pieces defined over $\QQ$, which is the starting point of \cref{sec:hodge}.

In this section and the next, $r\in\Gm\smallsetminus\{\pm1\}$ is fixed and $X=X_r$ is the smooth surface of \cref{prop:family}, with equations
\[f=z_3^2+rx_0^6+x_1^3-r^2x_2^3,\qquad g=z_4^2+x_0^6+x_1^3-x_2^3.\]
We put $m_0=x_0^6$, $m_1=x_1^3$, $m_2=x_2^3$, $m_3=z_3^2$, $m_4=z_4^2$, with exponents $e=(e_0,\dots,e_4)=(6,3,3,2,2)$, and we let $\zeta_6=e^{2\pi i/6}$ and $\zeta_3=\zeta_6^2$. Groups act on cohomology by pull-back.

\medskip

By \cref{thm:catanese} and adjunction, $K_X=\cO_X(1)$, $K_X^2=1$, $p_g(X)=1$, $q(X)=0$ and $X$ is simply connected. Hence $e(X)=12\chi(\cO_X)-K_X^2=23$, $b_2(X)=21$, $h^{1,1}(X)=19$, and $\Lambda=\operatorname{H}^2(X,\ZZ)$ is a torsion free unimodular lattice of signature $(3,18)$, which is odd since $K^2=1$. By Wu's formula $K$ is a characteristic element\footnote{i.e., $K\cdot \alpha = \alpha^2 \mod{2}$ for every $\alpha\in \operatorname{H}^2(X,\ZZ)$ and, in particular, if $\alpha\in K^\perp$ then $\alpha^2 = 0 \mod 2$.}, so $L=K^\perp$ is even. As $K^2=1$ we have $\Lambda=\ZZ K\oplus L$, so $L$ is unimodular of signature $(2,18)$ and
\[L\simeq \operatorname{II}_{2,18}\simeq U^{\oplus2}\oplus E_8(-1)^{\oplus2}\]
by the classification of indefinite unimodular lattices \cite[Chapter~V]{Ser73}. The polarized Hodge structure on $L$ has Hodge numbers $(1,18,1)$ and polarization $Q=-\langle\ ,\ \rangle$, where $\langle\ ,\ \rangle$ is the intersection form. We also use the Hermitian form $h(x,y)=\int_Xx\wedge\bar y$ on $L\otimes\CC=\operatorname{H}^2(X,\CC)_\prim$, which by the Hodge--Riemann relations is positive definite on $\operatorname{H}^{2,0}\oplus \operatorname{H}^{0,2}$ and negative definite on $\operatorname{H}^{1,1}_\prim$.

\vspace{2mm}

With the same notation as in \cref{sec:extremal}, we let $\tau_1=\sigma_1\varepsilon_1$ and $\tau_2=\sigma_2\varepsilon_2$, that is,
\[\tau_1(x_0,x_1,x_2,z_3,z_4)=(x_0,\zeta_3x_1,x_2,-z_3,z_4),\qquad \tau_2(x_0,x_1,x_2,z_3,z_4)=(x_0,x_1,\zeta_3x_2,z_3,-z_4),\]
so that $G=\langle\tau_1,\tau_2\rangle=\Aut_1(X)\simeq\ZZ/6\ZZ\times\ZZ/6\ZZ$. Fix $\theta\in\CC$ with $\theta^6=r$ and let $\iota$ be the element of the second kind of \cref{prop:family} with $\mu=\theta^3$, $q=-\theta^2$ and $p=-\theta^{-2}$:
\[\iota(x_0,x_1,x_2,z_3,z_4)=(x_0,\,-\theta^2x_2,\,-\theta^{-2}x_1,\,\theta^3z_4,\,\theta^{-3}z_3).\]
Then $\iota^*f=rg$, $\iota^*g=r^{-1}f$, $\iota^2=1$, $\iota\tau_1\iota=\tau_2$ and $\Aut(X)=G\rtimes\langle\iota\rangle$.

\medskip

To describe the primitive cohomology
$\operatorname{H}^2(X,\CC)_{\mathrm{prim}}
=L\otimes_{\ZZ}\CC,$ we use the so-called \emph{Cayley trick}, which associates to the complete intersection
\(X=V(f,g)\) a hypersurface in a projective bundle \cite{Mav99}.
Consider $\cE=\mathcal O_{\PW}(6)^{\oplus2}.$
Thus, we have $\PP(\cE)\simeq\PW\times\PP^1.$

Let \(H\) be the pull-back of the divisor class of
\(\mathcal O_{\PW}(1)\), and let \(F\) be the class
of a fibre of the projection to \(\PP^1\).
We identify \(\operatorname{Cl}(\PP(\cE))\) with \(\ZZ^2\)
using the ordered basis \((F+6H,H)\), and then a bidegree \((a,b)\)
represents the divisor class
$
a(F+6H)+bH=aF+(6a+b)H.
$
The Cox ring of \(\PP(\cE)\) is
$
S:=\CC[x_0,x_1,x_2,z_3,z_4,u_1,u_2].
$
Write \((w_0,\ldots,w_4)=(1,2,2,3,3)\) for the weights of
\((x_0,x_1,x_2,z_3,z_4)\).
The corresponding coordinate divisors have classes \(w_iH\),
whereas \(\{u_j=0\}\) has class \(F=(F+6H)-6H\).
Consequently, the grading is
\[
\deg x_i=(0,w_i)\ (0\leq i\leq2),\qquad
\deg z_i=(0,w_i)\ (i=3,4),\qquad
\deg u_j=(1,-6)\ (j=1,2).
\]
For the Cayley trick, writing $m_0=x_0^6$, $m_1=x_1^3$, $m_2=x_2^3$, $m_3=z_3^2$ and $m_4=z_4^2$, we put
\begin{gather*}\Phi:=u_1f+u_2g=\sum_{i=0}^4\ell_i(u)\,m_i,\\ \ell_0=ru_1+u_2,\qquad \ell_1=u_1+u_2,\qquad \ell_2=-(r^2u_1+u_2),\qquad \ell_3=u_1,\qquad \ell_4=u_2,\end{gather*}
of bidegree $\beta=(1,0)$, which represents the class \(F+6H\) of the ample line bundle
\(\mathcal O_{\PW}(6)
\boxtimes\mathcal O_{\PP^1}(1)\), and where two of the linear forms \(\ell_i\) are linearly independent,
since \(r\notin\{0,\pm1\}\). As the
anticanonical class of \(\PP(\cE)\) is
$
2F+11H,
$
the anticanonical degree is given by \(\beta_0:=(2,-1)\).

The Jacobian ideal of $\Phi$ is
\[J=\Bigl(\tfrac{\partial\Phi}{\partial x_0},\dots,\tfrac{\partial\Phi}{\partial z_4},\tfrac{\partial\Phi}{\partial u_1},\tfrac{\partial\Phi}{\partial u_2}\Bigr)=\bigl(x_0^5\ell_0,\ x_1^2\ell_1,\ x_2^2\ell_2,\ z_3\ell_3,\ z_4\ell_4,\ f,\ g\bigr),\]
and we put $R=S/J$, a bigraded ring. The hypersurface $Y:=\{\Phi=0\}\subset\PP(\cE)$ is quasi-smooth, because $f$ and $g$ have independent gradients at every point of the punctured cone over $X$, and it is ample. Moreover, $Y$ contains $\pi^{-1}(X)=X\times\PP^1$, where $\pi\colon\PP(\cE)\to\PW$ is the projection.

The following theorem, which combines the results of Batyrev and Cox on ample hypersurfaces of toric varieties with a theorem of Mavlyutov, describes the primitive Hodge components of $X$ in terms of the bigraded pieces of $R$. It is the analogue for $X$ of Griffiths' description of the primitive cohomology of a smooth hypersurface by its Jacobian ring \cite{gr69}.

\begin{theorem}[Batyrev--Cox, Mavlyutov]
\label{thm:cayley}
Let $\Omega$ be the toric volume form of $\PP(\cE)$ as in \cite{BC94}. For $q=0,1,2$ the map sending $A\in S_{(q,1)}$ to the $(3-q,1+q)$-component of $\Res(A\Omega/\Phi^{q+2})$ induces an isomorphism $R_{(q,1)}\simeq \operatorname{H}^{3-q,1+q}(Y)_\prim$, and $\psi=j_!\circ\pi^*\colon \operatorname{H}^2(X,\QQ)_\prim\to \operatorname{H}^4(Y,\QQ)_\prim$, where $j\colon X\times\PP^1\to Y$ is the inclusion, is an isomorphism of Hodge structures of type $(1,1)$. Hence
\[A\longmapsto\omega_A=\psi^{-1}\Res\Bigl(\frac{A\,\Omega}{\Phi^{q+2}}\Bigr)^{3-q,1+q}\]
induces isomorphisms $R_{(q,1)}\simeq \operatorname{H}^{2-q,q}(X)_\prim$, and $\dim R_{(q,1)}=1,18,1$ for $q=0,1,2$.
\end{theorem}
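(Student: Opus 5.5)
The statement is essentially a combination of results in the literature. The plan is to check that $\PP(\cE)$, $\Phi$ and $Y$ satisfy the hypotheses of those results, and then to match the gradings.

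\emph{Step 1 (toric setup).} First I will note that $\PP(\cE)\simeq\PW\times\PP^1$ is a complete simplicial toric variety of dimension $5$. Its Cox ring is $S$ with the bigrading described above. The class $\beta=(1,0)=F+6H$ is ample, since $\cO_{\PW}(6)$ is an ample Cartier class on the well-formed space $\PW$ and $\cO_{\PP^1}(1)$ is ample. The anticanonical class is the sum of the coordinate divisor classes: $(1+2+2+3+3)H+2F=2F+11H$, which is the bidegree $\beta_0=(2,-1)$.

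\emph{Step 2 (quasi-smoothness of $Y$).} Next I will check that $Y=\{\Phi=0\}$ is quasi-smooth. At a point of the punctured cone where all partial derivatives of $\Phi$ vanish, we have $\partial\Phi/\partial u_j=0$, so $f=g=0$. We also have $u_1\nabla f+u_2\nabla g=0$ with $(u_1,u_2)\neq0$ (by irrelevance of $u=0$), contradicting the independence of the gradients of $f$ and $g$. That independence was established in the proof of \cref{prop:family}, together with \cref{lem:jacobian}.

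\emph{Step 3 (Batyrev--Cox residues).} Then Batyrev--Cox \cite{BC94} (the residue theorem for quasi-smooth ample hypersurfaces, in the form where the relevant graded pieces of the Jacobian ring $S/J$ compute the primitive Hodge components) gives isomorphisms
\[R_{(q+2)\beta-\beta_0}\simeq \operatorname{H}^{3-q,1+q}(Y)_\prim,\qquad A\mapsto \Res\bigl(A\Omega/\Phi^{q+2}\bigr)^{3-q,1+q}.\]
Since $(q+2)\beta-\beta_0=(q,1)$, this is the first assertion. The one point needing care is that Batyrev--Cox state the result with the ideal generated by the $x_i\,\partial\Phi/\partial x_i$, or use the ordinary Jacobian ideal only under extra hypotheses. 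Here I would invoke Mavlyutov's refinement \cite{Mav99}, which shows that for these degrees and under quasi-smoothness the ordinary Jacobian ideal $J$ gives the same quotient. If necessary, I would verify directly that the extra terms $x_i\partial\Phi/\partial x_i$ versus $\partial\Phi/\partial x_i$ do not change $R$ in bidegrees $(q,1)$.

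\emph{Step 4 (Cayley trick).} For the comparison with $X$, I would use Mavlyutov's Cayley trick theorem. The hypersurface $Y$ contains $X\times\PP^1$, and $\psi=j_!\circ\pi^*$ maps $\operatorname{H}^2(X)_\prim$ isomorphically onto $\operatorname{H}^4(Y)_\prim$, shifting the Hodge type by $(1,1)$. The conceptual reason is that $\PP(\cE)\smallsetminus Y\to\PW\smallsetminus X$ is an $\AA^1$-bundle, which identifies the relevant relative cohomologies. Composing the two isomorphisms gives $R_{(q,1)}\simeq\operatorname{H}^{2-q,q}(X)_\prim$.

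\emph{Step 5 (dimensions).} The dimensions then follow from the Hodge numbers computed earlier: $h^{2,0}=1$ and $h^{1,1}_\prim=19-1=18$. They can also be cross-checked by computing the bigraded Hilbert function of $R$ directly.

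I expect the main obstacle to be Step 3, namely the identification of the Batyrev--Cox Jacobian-type ring with $S/J$ for the ordinary Jacobian ideal in the non-smooth (merely simplicial) toric ambient space. I would settle this by citing the precise statement in \cite{Mav99}, which treats exactly quasi-smooth complete intersections via the Cayley trick.
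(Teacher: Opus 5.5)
Your proposal follows the paper's route. You cite Batyrev--Cox for the residue isomorphisms on the quasi-smooth ample hypersurface $Y$ after checking $(q+2)\beta-\beta_0=(q,1)$, cite Mavlyutov's Cayley-trick theorem for $\psi$, and read off the dimensions from $h^{2,0}=1$ and $h^{1,1}_\prim=18$.

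Two small corrections are needed.

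First, the extra hypothesis in the quasi-smooth Batyrev--Cox theorem with the ordinary Jacobian ideal (as in \cite[Theorem~5.1]{VL24}) is only that the Hodge index avoid $p=m/2-1$. This is automatic here since $m=5$, so no comparison with $(x_i\,\partial\Phi/\partial x_i)$ is needed. Such a comparison is in any case unavailable: $(x_i\,\partial\Phi/\partial x_i)$ is far smaller than $J$ in bidegree $(1,1)$, and the non-degenerate form of Batyrev--Cox does not apply because $Y$ contains torus-fixed points.

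Second, you should add that Mavlyutov's primitive cohomology, the kernel of the Gysin map $\operatorname{H}^2(X)\to\operatorname{H}^6(\PW)$, equals $K^\perp$. This holds because $K$ spans the image of $\operatorname{H}^2(\PW)$ in $\operatorname{H}^2(X)$ and $K^2=1$.
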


\begin{proof}
The first assertion is the theorem of Batyrev and Cox \cite{BC94} for the ample quasi-smooth hypersurface \(Y\), in the form of \cite[Theorem~5.1]{VL24}, applied with \(m=5\) and \(p=3-q\).
The condition \(p\neq m/2-1\) is automatic. For \(A\in S_{(q,1)}\), the form \(A\Omega/\Phi^{q+2}\)
has bidegree zero because
$
(q+2)\beta-\beta_0=(q,1).
$
Taking the \((3-q,1+q)\)-component of its residue gives
the required isomorphism. The second assertion is Mavlyutov's theorem \cite[Theorem~3.6]{Mav99} in the form of \cite[Theorem~5.2 and Remark~5.2]{VL24}, where the primitive cohomology of $X$ used there is the kernel of the Gysin map $\operatorname{H}^2(X)\to \operatorname{H}^6(\PW)$, which is $K^\perp$ because $K$ generates the image of $\operatorname{H}^2(\PW)$ in $\operatorname{H}^2(X)$ and $K^2=1$.
\end{proof}

\medskip

We now determine how the group $G$ acts on the classes $\omega_A$ of \cref{thm:cayley}, which will show that residues of monomials are eigenvectors of $G$. For a monomial $A=x_0^{c_0}x_1^{c_1}x_2^{c_2}z_3^{c_3}z_4^{c_4}u_1^{b_1}u_2^{b_2}$ put
\[k(A)=(k_1,k_2)\in(\ZZ/6\ZZ)^2,\qquad k_1=2(c_1+1)+3(c_3+1),\quad k_2=2(c_2+1)+3(c_4+1).\]
The polynomial $\Phi$ is $G$-invariant, $G=\langle \tau_1,\tau_2\rangle$ acting trivially on $u_1,u_2$. Up to a nonzero constant, $\Omega$ is the contraction of $dx_0\wedge\dots\wedge du_2$ by the two Euler vector fields of the bigrading \cite[Proposition~7.2]{VL24}, which are preserved by every coordinate change multiplying each variable by a scalar. Thus, $\tau_j^*\Omega=\det(\tau_j)\,\Omega=\zeta_6^5\,\Omega$ for $j=1,2$, the determinant being taken on the seven variables, and
\begin{equation}
\label{eq:char}
\tau_1^*\omega_A=\zeta_6^{k_1(A)}\,\omega_A,\qquad \tau_2^*\omega_A=\zeta_6^{k_2(A)}\,\omega_A .
\end{equation}
Thus residues of monomials are eigenvectors of the diagonal automorphisms. In what follows, we write $H_k\subset \operatorname{H}^2(X,\CC)_\prim$ for the eigenspace of the character $k\in(\ZZ/6\ZZ)^2$.

The ideal $J$ is homogeneous for the grading of $S$ by the exponents modulo $e = (e_0,e_1,e_2,e_3,e_4) = (6,3,3,2,2)$. For a monomial $A$ let $a_i\in\{0,\dots,e_i-1\}$ be the residue of $c_i$ modulo $e_i$, write $a=(a_0,\dots,a_4)$ for its class and put $I(a):=\{i\in\{0,\ldots,4\}:a_i=e_i-1\}$. In $R_{(q,1)}$ the degree condition $\sum_iw_ic_i-6(b_1+b_2)=1$ gives $a_0\equiv1-2a_1-2a_2-3a_3-3a_4\pmod 6$, so $a$ and $k(A)$ determine each other, and we also write $H_a$ for $H_{k(A)}$. For example, $x_0$ has class $(1,0,0,0,0)$ and character $(5,5)$, and the monomials $x_0^4x_1x_2\,m_ju^b$ of bidegree $(2,2)$ which appear in \cref{lem:socle} have class $(4,1,1,0,0)$ and character $(1,1)$, and in both cases $I(a)=\varnothing$.

Since $J$ is homogeneous for this grading, the ring $R$ splits as the direct sum of its class-$a$ parts. The following lemma describes each of them as a quotient of a polynomial ring in $m_0,\dots,m_4,u_1,u_2$, which only depends on the set $I(a)$. This reduces the computation of the eigenspaces $H_a$, and of the pairing between them, to linear algebra in these quotients.

\begin{lemma}
\label{lem:iso}
For each subset \(I\subseteq\{0,\ldots,4\}\), define
\[
A_I:=
\frac{\CC[m_0,\ldots,m_4,u_1,u_2]}
{\bigl(f,g,\ m_i\ell_i\ (i\notin I),\ \ell_i\ (i\in I)\bigr)}.
\]
Let \(a=(a_0,\ldots,a_4)\) be a residue class modulo \(e\),
with \(0\leq a_i<e_i\). Multiplication by the monomial
\(x^a=x_0^{a_0}x_1^{a_1}x_2^{a_2}z_3^{a_3}z_4^{a_4}\)
identifies \(A_{I(a)}\), as a vector space, with the class-\(a\)
part of \(R\), denoted \(x^a A_{I(a)}\).

\end{lemma}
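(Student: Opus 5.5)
We will prove the lemma by decomposing $J$ monomial-by-monomial along the residue classes modulo $e$, and then checking that the class-$a$ part of $J$ is exactly $x^a$ times the ideal defining $A_{I(a)}$.

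First, every monomial of $S$ has a unique factorization $x^c u^b = x^a\cdot m^{n}u^b$ with $c_i=a_i+e_in_i$, $0\le a_i<e_i$. Hence, as a vector space, the class-$a$ part of $S$ is $x^a\,\CC[m_0,\dots,m_4,u_1,u_2]$, and multiplication by $x^a$ is an isomorphism from $P:=\CC[m,u]$ onto it. Since $J$ is homogeneous for the class grading, it suffices to show that the class-$a$ part of $J$ equals $x^a\cdot J_I$, where $J_I=(f,g,\ m_i\ell_i\ (i\notin I),\ \ell_i\ (i\in I))\subset P$ and $I=I(a)$.

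Next, we examine each generator. Write $\partial_i\Phi=e_i x_i^{e_i-1}\ell_i$, with $x_i$ denoting the appropriate variable among $x_0,x_1,x_2,z_3,z_4$; this has class $\epsilon_i(e_i-1)$. The generators $f,g$ lie in $P$ and have class $0$. The class-$a$ part of $J$ is spanned by the elements $M\cdot h$, where $h$ is a generator and $M$ is a monomial whose class plus the class of $h$ equals $a$.

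For $f$ and $g$, $M$ must be of the form $x^aP$, giving $x^a(f,g)P$.

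For $h=x_i^{e_i-1}\ell_i$, the $i$-th exponent of $M$ is $c_i\equiv a_i+1 \pmod{e_i}$. If $a_i=e_i-1$ (i.e.\ $i\in I$), we may take $c_i=0$, and $x_i^{e_i-1}\ell_i$ times a monomial of class $a-\epsilon_i(e_i-1)$ gives $x^a\ell_i P$. If $a_i<e_i-1$, then $c_i\ge a_i+1$, and $x_i^{c_i+e_i-1}=x_i^{a_i}m_i^{n+1}$ with $n\ge0$. The products are therefore $x^a m_i\ell_i P$ (with $n=0$ giving the generator and larger $n$ multiples of it).

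Summing over the generators yields exactly $x^aJ_I$, and passing to quotients gives $A_{I(a)}\cong x^aA_{I(a)}$ as the class-$a$ part of $R$.

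The only delicate point is bookkeeping. We must make sure the ideal-versus-span distinction is respected: multiples of generators by arbitrary monomials, not just by elements of $P$, must land in $x^aJ_I$. The exponent analysis above handles this, including the fact that the $u$-variables and the $m_j$ for $j\neq i$ contribute freely. The bigrading plays no role beyond being compatible with the decomposition, since all objects involved are bihomogeneous.
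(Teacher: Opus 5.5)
Your proposal is correct and follows essentially the same route as the paper. Both identify the class-$a$ part of $S$ with $x^a\CC[m_0,\dots,m_4,u_1,u_2]$. Both then check, generator by generator, that the monomial multiples of $x_i^{e_i-1}\ell_i$ lying in class $a$ are exactly $x^a m^\delta u^b\ell_i$, with $\delta_i\geq1$ for $i\notin I(a)$ and $\delta_i\geq0$ for $i\in I(a)$, while $f,g$ contribute $x^a(f,g)$. You spell out the exponent bookkeeping that the paper compresses into a single sentence.
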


\begin{proof}
Put \(B=\CC[m_0,\ldots,m_4,u_1,u_2]\).
The class-\(a\) part of \(S\) is \(x^aB\). A monomial multiple of the generator \(x_i^{e_i-1}\ell_i\) of $J$  lies in class $a$ precisely when it has the form
\(x^a m^\delta u^b\ell_i\), with \(\delta_i\geq1\)
if \(i\notin I(a)\), and \(\delta_i\geq0\) otherwise.
Since \(J\) is homogeneous and \(f,g\) have class zero,
the relations on the coefficient of \(x^a\) are exactly
those defining \(A_{I(a)}\). 
%By homogeneity, this gives \[J\cap x^aB=x^a\bigl(f,g,\ m_i\ell_i\ (i\notin I(a)),\\ell_i\ (i\in I(a))\bigr)B.\] Taking the quotient proves the assertion.
\end{proof}

Combining \cref{thm:cayley} with \cref{lem:iso}, we can now compute the Hodge numbers of each eigenspace $H_a$. Only sixteen classes contribute: fourteen of them give one-dimensional eigenspaces of type $(1,1)$, and the remaining two give the three-dimensional eigenspaces containing $\operatorname{H}^{2,0}(X)$ and $\operatorname{H}^{0,2}(X)$.

\begin{proposition}
\label{prop:dims}
Let \(a=(a_0,\ldots,a_4)\), with \(0\leq a_i<e_i\), satisfy
\[
|a|:=a_0+2a_1+2a_2+3a_3+3a_4\equiv1\pmod6,
\] The dimensions of the Hodge components of \(H_a\), namely
\[
\left(
\dim_{\CC}(H_a\cap \operatorname{H}^{2,0}(X)),
\dim_{\CC}(H_a\cap \operatorname{H}^{1,1}(X)),
\dim_{\CC}(H_a\cap \operatorname{H}^{0,2}(X))
\right),
\]
are as follows:
\begin{enumerate}
\item
\((0,0,0)\) if \(|I(a)|\geq2\).

\item
\((0,1,0)\) if \(|I(a)|=1\).
There are exactly \(14\) such classes, all with \(|a|=7\).

\item
\((1,2,0)\) for \(a=(1,0,0,0,0)\), and
\((0,2,1)\) for \(a=(3,1,1,0,0)\).
These are the only classes with \(I(a)=\varnothing\).
\end{enumerate}
\end{proposition}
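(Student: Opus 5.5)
The plan is to compute, for each admissible class $a$, the dimensions of the bigraded pieces $(A_{I(a)})$ that land in $R_{(q,1)}$ for $q=0,1,2$, using \cref{lem:iso} and \cref{thm:cayley}. First we fix the bidegree bookkeeping. For $x^a m^\delta u^b$ to lie in $S_{(q,1)}$ we need $b_1+b_2=q$ (first coordinate) and $|a|+\sum_i 6\delta_i-6q=1$, where each $m_i$ has weighted degree $6$. Hence $|a|\equiv1\pmod 6$, and for given $a$ and $q$ the total $m$-degree is $d=(1+6q-|a|)/6$. So $H_a\cap\operatorname{H}^{2-q,q}$ has dimension equal to the dimension of the part of $A_{I(a)}$ of $u$-degree $q$ and $m$-degree $d$. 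Since $0\le|a|\le 5+4+4+3+3=19$, only $|a|\in\{1,7,13,19\}$ can occur, and $d\ge0$ requires $|a|\le 1+6q$.

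Next we simplify $A_I$. The relations $\ell_i=0$ for $i\in I$ are linear in $u$, and any two of the $\ell_i$ are independent. Thus if $|I|\ge2$ then $u_1=u_2=0$ in $A_I$, so the only possible nonzero piece has $q=0$. But $q=0$ forces $d=(1-|a|)/6\ge0$, i.e.\ $|a|=1$. A class with $|I|\ge2$ has $|a|\ge 3+3$ or more (for example $a_3=a_4=1$ already gives $6$), so $|a|\ge7$ and every such class has $|a|>1$. This gives case (1), after a short check that $|a|=1$ forces $I=\varnothing$.

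If $I=\{i\}$, the relation $\ell_i=0$ cuts $u$ down to one variable $u'$. In the remaining quotient, every $m_j$ with $j\neq i$ satisfies $m_j\ell_j=0$ with $\ell_j$ a nonzero multiple of $u'$. So in $u$-degree $q\ge1$ all $m_j$ ($j\ne i$) are killed, while $m_i$ is constrained through $f,g$ restricted to the line $\ell_i=0$. On that line $f,g$ become two proportional combinations $u'\cdot(\text{linear form in }m)$. We will compute directly that the part of $u$-degree $q$ and $m$-degree $d$ is $\CC$ exactly when $d=0$, and zero when $d\ge1$ (using $u'm_i\cdot(\text{coefficient})=0$ coming from $f$ or $g$). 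In $u$-degree $0$ the $m$'s are free modulo $f,g$ without $u$, which does not contribute since it needs $d=(1-|a|)/6\ge0$. So $H_a$ is $1$-dimensional, of type determined by $q$ with $6q=|a|-1$. The enumeration step is then: list the $a$ with exactly one maximal coordinate and $|a|\equiv1 \pmod 6$, show that all have $|a|=7$ (hence $q=1$, type $(1,1)$), and count $14$. Examples are $(5,1,0,0,0)$ and $(0,2,1,0,1)$, giving $|a|=7$; one must also check that $|a|=13$ with $|I|=1$ is impossible. This counting is routine but error-prone, and I would organize it by which index is maximal.

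For $I=\varnothing$ we have $a_0\le4$, $a_1,a_2\le1$, $a_3=a_4=0$, so $|a|\le 8$, and $|a|\equiv1$ gives $|a|\in\{1,7\}$. These are $a=(1,0,0,0,0)$ with $|a|=1$, and $(3,1,1,0,0)$ with $|a|=7$ (one should double-check $(5,\dots)$ is excluded, since $a_0=5$ lies in $I$). The main obstacle is computing the Hilbert function of $A_\varnothing=\CC[m,u]/(f,g,m_i\ell_i)$ in the needed degrees. For $a=(1,0,0,0,0)$ we need $(q,d)=(0,0),(1,1),(2,2)$. We get $1$ in $u$-degree $0$. In $u$-degree $1$ and $m$-degree $1$ the space has dimension $10-7=3$ minus the relations, and we expect $2$; in $(2,2)$ we expect $0$ from the socle structure. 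For $(3,1,1,0,0)$ we need $(1,0)$, which has dimension $2$ from the span of $u_1,u_2$, and $(2,1)$, which should be $1$. The cross-check is that the totals $1,18,1$ from \cref{thm:cayley} must be matched: $14+2+2=18$ for $(1,1)$, and $1$ each for $(2,0)$ and $(0,2)$. Since the totals are forced, it suffices to prove the upper bounds by exhibiting relations, and equality then follows.
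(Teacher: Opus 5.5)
Your outline is the same as the paper's proof. It uses the bookkeeping $d=q-m$ with $m=(|a|-1)/6$, the vanishing of $u_1,u_2$ in $A_I$ for $|I|\geq2$, the description $A_{\{s\}}\simeq\CC[m_0,m_1,m_2,t]/(tm_0,tm_1,tm_2)$, and the reduction of case (3) to the classes $(1,0,0,0,0)$ and $(3,1,1,0,0)$. There is a genuine gap in case (3). The totals from \cref{thm:cayley} only give
\[\dim(A_\varnothing)_{(2,2)}+\dim(A_\varnothing)_{(1,2)}=h^{0,2}=1.\]
This does not say whether $\operatorname{H}^{0,2}$ sits in the class $(1,0,0,0,0)$ or in $(3,1,1,0,0)$. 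In other words, it cannot separate the asserted pattern $(1,2,0),(0,2,1)$ from $(1,2,1),(0,2,0)$. Your strategy of proving upper bounds is logically sound. However, the one bound that carries content, $(A_\varnothing)_{(2,2)}=0$, is only ``expected from the socle structure'', and that is not an argument.

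The paper closes the gap by computing $(A_\varnothing)_{(1,2)}$ explicitly (\cref{lem:socle}: it is one-dimensional) and then invoking $h^{0,2}=1$. You can instead prove the vanishing directly. After eliminating $m_3,m_4$, the relations of $A_\varnothing$ are $m_iu_2=c_im_iu_1$ for $i=0,1,2$, with $(c_0,c_1,c_2)=(-r,-1,-r^2)$ pairwise distinct, together with $(rm_0+m_1-r^2m_2)u_1=0$; the relation from $m_4\ell_4$ is redundant.
\begin{itemize}
\item Comparing $c_i$ and $c_j$ kills $m_im_ju_1$ for $i\neq j$.
\item Multiplying the last relation by $m_ju_1$ then kills $m_j^2u_1^2$.
\end{itemize}
Hence $(A_\varnothing)_{(2,2)}=0$. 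A second option is conjugation. Your $q=0$ analysis gives $\operatorname{H}^{2,0}=\CC\omega_{x_0}\subset H_{(5,5)}$. Complex conjugation maps $H_k$ onto $H_{-k}$, so $\operatorname{H}^{0,2}\subset H_{(1,1)}$, which is the class $(3,1,1,0,0)$.

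There are also some smaller problems.
\begin{itemize}
\item Your count ``$10-7=3$ minus the relations'' in bidegree $(1,1)$ is off. The correct count is $6-4=2$, because the five tensors $\lambda_i\otimes\ell_i$ satisfy the single relation $\sum\lambda_i\otimes\ell_i=0$. In fact no computation is needed here. The fourteen $|I|=1$ pieces and $(A_\varnothing)_{(0,1)}=\CC u_1\oplus\CC u_2$ are computed exactly, so $18=14+\dim(A_\varnothing)_{(1,1)}+2$ forces the value $2$.
\item You do not carry out the enumeration in (2), and your sample $(0,2,1,0,1)$ is not a case-(2) class. It has $I=\{1,4\}$ and $|a|=9\not\equiv1$. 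Sorting by the maximal index gives $2+2+2+4+4=14$ classes, namely those of \cref{tab:basis}. Moreover $|a|\leq11$ whenever $|I(a)|=1$, so $|a|=13$ cannot occur.
\item In the singleton case, $f$ and $g$ do not involve $u$, so they do not become multiples of $u'$ on the line $\ell_s=0$. What you need is that $f$ or $g$ expresses $m_s$ through the $m_j$ with $j\neq s$, which gives $u'm_s=0$ as well.
\end{itemize}
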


\begin{proof}
Put \(m:=(|a|-1)/6\in\ZZ_{\geq0}\).
By \cref{thm:cayley} and \cref{lem:iso}, for \(q=0,1,2\) we have
\[
H_a\cap \operatorname{H}^{2-q,q}(X)\simeq
(A_{I(a)})_{(q-m,q)},
\]
where the first index is the total degree in the variables \(m_i\)
and the second is the total degree in \(u_1,u_2\).
We eliminate \(m_3,m_4\) using \(f,g\), so that their images are
the linear forms
\[
\lambda_3=-(rm_0+m_1-r^2m_2),
\qquad
\lambda_4=-(m_0+m_1-m_2).
\]
Writing \(\lambda_i=m_i\) for \(i\leq2\), any three of
\(\lambda_0,\ldots,\lambda_4\) are linearly independent.

\smallskip
\noindent
(1) Suppose that \(|I(a)|\geq2\).
Two linearly independent forms \(\ell_i\) vanish in \(A_{I(a)}\),
so \(u_1=u_2=0\) there.
Thus only \(q=0\) can contribute, and the nonnegativity of
the first degree then forces \(m=0\).
But \(m=0\) implies \(a=(1,0,0,0,0)\), for which
\(I(a)=\varnothing\), a contradiction.

\smallskip
\noindent
(2) For a singleton \(I=\{s\}\), the quotient
\(\CC[u_1,u_2]/(\ell_s)\) is a polynomial ring \(\CC[t]\),
where \(t\) is the image of any linear form in \(u_1,u_2\)
not proportional to \(\ell_s\).
Each \(\ell_i\) with \(i\neq s\) becomes a nonzero multiple of \(t\).
The four forms \(\lambda_i\) with \(i\neq s\) span the space
of linear forms in \(m_0,m_1,m_2\), so
\[
A_I\simeq
\frac{\CC[m_0,m_1,m_2,t]}
     {(tm_0,tm_1,tm_2)},
\]
with \(\deg m_i=(1,0)\) and \(\deg t=(0,1)\).
Consequently,
\[
\dim_{\CC}(A_I)_{(q-m,q)}
=
\begin{cases}
1,&q=m,\\
0,&q\neq m.
\end{cases}
\]
Applying this to \(I=I(a)\), it remains to determine \(m\).
Listing the classes with \(|I(a)|=1\) and
\(|a|\equiv1\pmod6\) gives the \(14\) classes of
\cref{tab:basis}, all with \(|a|=7\), hence \(m=1\).
Their Hodge dimensions are therefore \((0,1,0)\).

\smallskip
\noindent
(3) Suppose that \(I(a)=\varnothing\).
The conditions \(a_i\leq e_i-2\) and
\(|a|\equiv1\pmod6\) leave only
\[
a=(1,0,0,0,0),\qquad m=0,
\]
and
\[
a=(3,1,1,0,0),\qquad m=1.
\]
In \(A_\varnothing\), the component of bidegree \((0,0)\)
is \(\CC\), and that of bidegree \((0,1)\) is
\(\CC u_1\oplus\CC u_2\).
In bidegree \((1,1)\), the five tensors
\(\lambda_i\otimes\ell_i\) span a subspace of dimension \(4\)
of the \(6\)-dimensional space of bilinear forms:
the first four are linearly independent, whereas
\[
\sum_{i=0}^4\lambda_i\otimes\ell_i=0,
\]
since this sum is the image of \(\Phi=u_1f+u_2g\)
after eliminating \(m_3,m_4\).
Thus
\[
\dim_{\CC}(A_\varnothing)_{(1,1)}=2.
\]
Moreover,
\(\dim_{\CC}(A_\varnothing)_{(1,2)}=1\)
by \cref{lem:socle} below.
This component already contributes one dimension to
\(\operatorname{H}^{0,2}(X)\) for the class \(a=(3,1,1,0,0)\).
Since \(h^{0,2}(X)=1\), it follows that
\((A_\varnothing)_{(2,2)}=0\).
Together with the vanishing of components of negative degree,
these computations give \((1,2,0)\) and \((0,2,1)\),
respectively.
\end{proof}

Choosing a monomial in each of the one-dimensional pieces of \cref{prop:dims}, and suitable polynomials in the two three-dimensional ones, we obtain the elements $A_1,\dots,A_{20}$ listed in \cref{tab:basis}. In terms of $k$, the characters occurring in $\operatorname{H}^2(X,\CC)_\prim$ are
\[(\{1,3,5\}^2\smallsetminus\{(3,3)\})\ \cup\ (\{2,4\}\times\{1,5\})\ \cup\ (\{1,5\}\times\{2,4\}),\]
each of order $6$, with $\dim H_{(5,5)}=\dim H_{(1,1)}=3$ and all other eigenspaces of dimension one. The $(2,0)$-form lies in $H_{(5,5)}$, so $\tau_1$ and $\tau_2$ act on $\operatorname{H}^{2,0}$ by $\zeta_6^{-1}$: indeed $R_{(0,1)}$ is spanned by $x_0$, so $\operatorname{H}^{2,0}$ is spanned by $\omega_{x_0}$ by \cref{thm:cayley}, and $k(x_0)=(5,5)$, so that $\tau_j^*\omega_{x_0}=\zeta_6^5\omega_{x_0}$ by \eqref{eq:char}.

\begin{table}[t]
\centering
\renewcommand{\arraystretch}{1.1}
\begin{tabular}{rlccccl}
\toprule
$i$ & $A_i$ & $k(A_i)$ & type & $I(a)$ & $\iota^*$ & partner index $i^*$,\ \ $c_i$\\
\midrule
1 & $x_0$ & $(5,5)$ & $(2,0)$ & $\varnothing$ & $-1$ & $6$,\ \ $1$ \\
2 & $rx_0^7u_1+2x_0x_1^3u_1$ & $(5,5)$ & $(1,1)$ & $\varnothing$ & $-1$ & $4$,\ \ $2r^3(1-r)$\\
3 & $x_0^7u_1$ & $(5,5)$ & $(1,1)$ & $\varnothing$ & $+1$ & $5$,\ \ $2r^2(1+r)$\\
4 & $x_0^3x_1x_2(ru_1+u_2)$ & $(1,1)$ & $(1,1)$ & $\varnothing$ & $-1$ & $2$\\
5 & $x_0^3x_1x_2(ru_1-u_2)$ & $(1,1)$ & $(1,1)$ & $\varnothing$ & $+1$ & $3$\\
6 & $x_0^3x_1x_2^4u_1^2$ & $(1,1)$ & $(0,2)$ & $\varnothing$ & $-1$ & $1$\\
\midrule
7 & $x_0^5x_1u_1$ & $(1,5)$ & $(1,1)$ & $\{0\}$ & $\to 8$ & $8$,\ \ $r(r+1)$\\
8 & $x_0^5x_2u_1$ & $(5,1)$ & $(1,1)$ & $\{0\}$ & $\to 7$ & $7$\\
9 & $x_0^3x_1^2u_1$ & $(3,5)$ & $(1,1)$ & $\{1\}$ & $\to 11$ & $10$,\ \ $-r^3$\\
10 & $x_0x_1^2x_2u_1$ & $(3,1)$ & $(1,1)$ & $\{1\}$ & $\to 12$ & $9$\\
11 & $x_0^3x_2^2u_1$ & $(5,3)$ & $(1,1)$ & $\{2\}$ & $\to 9$ & $12$,\ \ $1$\\
12 & $x_0x_1x_2^2u_1$ & $(1,3)$ & $(1,1)$ & $\{2\}$ & $\to 10$ & $11$\\
13 & $x_0^4z_3u_2$ & $(2,5)$ & $(1,1)$ & $\{3\}$ & $\to 17$ & $14$,\ \ $r^3(r-1)^2(r+1)$\\
14 & $x_1x_2z_3u_2$ & $(4,1)$ & $(1,1)$ & $\{3\}$ & $\to 18$ & $13$\\
15 & $x_0^2x_1z_3u_2$ & $(4,5)$ & $(1,1)$ & $\{3\}$ & $\to 20$ & $16$,\ \ $r^3(r-1)^2(r+1)$\\
16 & $x_0^2x_2z_3u_2$ & $(2,1)$ & $(1,1)$ & $\{3\}$ & $\to 19$ & $15$\\
17 & $x_0^4z_4u_1$ & $(5,2)$ & $(1,1)$ & $\{4\}$ & $\to 13$ & $18$,\ \ $(r-1)^2(r+1)$\\
18 & $x_1x_2z_4u_1$ & $(1,4)$ & $(1,1)$ & $\{4\}$ & $\to 14$ & $17$\\
19 & $x_0^2x_1z_4u_1$ & $(1,2)$ & $(1,1)$ & $\{4\}$ & $\to 16$ & $20$,\ \ $(r-1)^2(r+1)$\\
20 & $x_0^2x_2z_4u_1$ & $(5,4)$ & $(1,1)$ & $\{4\}$ & $\to 15$ & $19$\\
\bottomrule
\end{tabular}
\caption{The Griffiths basis of $X_r$. The column $\iota^*$ gives the eigenvalue of $\iota^*$ on $\omega_i$ when $I(a)=\varnothing$ or the index $j$ with $\iota^*\omega_i\in\Gm\omega_j$ when $|I(a)|=1$. The constant $c_i=\varphi(A_iA_{i^*})$ of \cref{lem:socle} below is listed once for each pair.}
\label{tab:basis}
\end{table}

In the two three-dimensional eigenspaces we use a basis adapted to $\iota$ (see \cref{prop:iota} below). More precisely, we put
\[A_2:=r\,x_0^7u_1+2\,x_0x_1^3u_1,\quad A_3:=x_0^7u_1,\quad A_4:=x_0^3x_1x_2\,(ru_1+u_2),\quad A_5:=x_0^3x_1x_2\,(ru_1-u_2).\]

The following theorem shows that the residues of these elements form a basis of the primitive cohomology of $X$.

\begin{theorem}
\label{thm:basis}
The $20$ classes $\omega_i=\omega_{A_i}$ of \cref{tab:basis} form a basis of the primitive cohomology $\operatorname{H}^2(X,\CC)_\prim$ consisting of eigenvectors of $G$, with the characters and Hodge types listed there. The classes $\omega_{x_0^7u_1}$ and $\omega_{x_0x_1^3u_1}$ span $\operatorname{H}^{1,1}\cap H_{(5,5)}$, and the classes $\omega_{x_0^3x_1x_2u_1}$ and $\omega_{x_0^3x_1x_2u_2}$ span $\operatorname{H}^{1,1}\cap H_{(1,1)}$.
\end{theorem}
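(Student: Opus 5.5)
The plan is to assemble \cref{thm:basis} from \cref{thm:cayley}, \cref{lem:iso}, \cref{prop:dims} and the character formula \eqref{eq:char}. Since $J$ is homogeneous for the class grading modulo $e$, we have $R_{(q,1)}=\bigoplus_a x^aA_{I(a)}$. Under $A\mapsto\omega_A$ this splits $\operatorname{H}^{2-q,q}(X)_\prim$ into the eigenspaces $H_a$. Here \eqref{eq:char} identifies the class-$a$ part with the $G$-eigenspace of character $k(a)$, and $a\mapsto k(a)$ is injective on classes of the right degree. Consequently it suffices to check, class by class, that the chosen $A_i$ of class $a$ and bidegree $(q,1)$ project to a basis of $(A_{I(a)})_{(q-m,q)}$. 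Once this is done, linear independence across distinct characters and Hodge types is automatic, and the count $1+18+1=20$ matches $\dim \operatorname{H}^2_\prim$.

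For the fourteen classes with $|I(a)|=1$, \cref{prop:dims}(2) shows that the relevant piece is one-dimensional, namely $\CC[m_0,m_1,m_2,t]/(tm_i)$ in bidegree $(0,1)$, spanned by $t$. For each row $7$--$20$ I would first check that $A_i=x^a u_j$ has class $a$ with the listed $I(a)$ and character. Then I would check that the chosen $u_j$ is not proportional to $\ell_s$, where $s$ is the unique element of $I(a)$. For instance, in rows $13$--$16$ one has $s=3$ and $\ell_3=u_1$, so $u_2$ is the right choice. In rows $7$--$12$ and $17$--$20$ one has $\ell_0=ru_1+u_2$, $\ell_1=u_1+u_2$, $\ell_2=-(r^2u_1+u_2)$ and $\ell_4=u_2$, none of which is proportional to $u_1$. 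Hence the image of $u_j$ is a nonzero multiple of $t$, and $\omega_{A_i}\neq0$.

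The class $a=(1,0,0,0,0)$ with $I(a)=\varnothing$ and $m=0$ needs a separate treatment. In bidegree $(0,1)$ we have $(A_\varnothing)_{(0,0)}=\CC$, spanned by $1$, which gives $\omega_{x_0}$ of type $(2,0)$. In bidegree $(1,1)$ we have $(A_\varnothing)_{(1,1)}$, of dimension $2$. The monomials $x_0^7u_1$ and $x_0x_1^3u_1$ correspond to $m_0u_1$ and $m_1u_1$, and I must show that these are independent modulo the span of $\lambda_i\otimes\ell_i$. Equivalently, no nonzero combination $\alpha\, m_0u_1+\beta\, m_1u_1$ lies in the span of $m_0\ell_0$, $m_1\ell_1$, $m_2\ell_2$ and $\lambda_3\ell_3$ (the span also contains $\lambda_4\ell_4$, which is dependent on these). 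This is a small linear algebra check in the six-dimensional space of bilinear forms in $(m_0,m_1,m_2)\times(u_1,u_2)$. The $u_2$-components of the generators, namely $m_0$, $m_1$, $-m_2$ and $0$, force the first three coefficients to vanish. What remains is the single form $\lambda_3u_1$, which involves $m_2$ with coefficient $r^2\neq0$. So the intersection is zero, as needed. Since $A_2,A_3$ are an invertible combination of these two monomials, they also form a basis.

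The class $a=(3,1,1,0,0)$ with $m=1$ is treated in the same way. In bidegree $(1,1)$, the product $x_0^3x_1x_2u_j$ corresponds to $u_j$ in $(A_\varnothing)_{(0,1)}=\CC u_1\oplus\CC u_2$, so $A_4,A_5$ form a basis as long as $r\neq0$. For $A_6=x_0^3x_1x_2^4u_1^2$, the corresponding element is $m_2u_1^2\in(A_\varnothing)_{(1,2)}$. Since $(A_\varnothing)_{(1,2)}$ is one-dimensional by \cref{lem:socle}, I must show that $m_2u_1^2$ is nonzero there. I expect this to be the main obstacle. I would try to show that its image under the socle functional $\varphi$ is nonzero, using the computation $c_6=\varphi(A_1A_6)=1$ from \cref{lem:socle}. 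Failing that, I would verify directly that $m_2u_1^2$ does not lie in the span of $\lambda_i\ell_i\cdot u_j$ in the nine-dimensional space of bidegree $(1,2)$.

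Finally, the Hodge types follow from the value of $q$ in each case, and the characters are read off from $k(A_i)$.
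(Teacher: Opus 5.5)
Your proposal is correct and follows the paper's route. You reduce, via \cref{thm:cayley}, \cref{lem:iso} and \cref{prop:dims}, to checking nonvanishing or independence piece by piece: $u_j$ is not proportional to $\ell_s$ for the fourteen classes with $|I(a)|=1$, and the two-dimensional pieces $(A_\varnothing)_{(1,1)}$ and $(A_\varnothing)_{(0,1)}$ are handled by explicit linear algebra. You are in fact slightly more complete than the paper's written proof, which does not explicitly treat $\omega_1$ and $\omega_6$; your argument for $\omega_6$ closes that point, since $A_6\in J$ would give $x_0A_6\in J$, contradicting $\varphi(x_0A_6)=1$ from \cref{lem:socle}.
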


\begin{proof}
By \cref{thm:cayley}, \eqref{eq:char} and \cref{prop:dims} it is enough to check that each listed element of a one-dimensional isotypic piece is nonzero and that the two listed elements of each two-dimensional piece are independent. In the first case the element is $x^au_j$ with $u_j$ not a multiple of $\ell_s$, so its image in $A_{\{s\}}\simeq\CC[m_0,m_1,m_2,t]/\bigl(t\cdot(m_0,m_1,m_2)\bigr)$ is a nonzero multiple of the variable $t$ introduced in the proof of \cref{prop:dims}(2). In the second case the pieces are $\CC u_1\oplus\CC u_2$ and the quotient of the bilinear forms by $\langle\lambda_i\otimes\ell_i\rangle$, in which $m_0\otimes u_1$ and $m_1\otimes u_1$ are independent, as one checks directly.
\end{proof}

\medskip

We now compute the intersection form in the basis of \cref{thm:basis}. All products $A_iA_{i^*}$, where $i^*$ is the partner index of $i$ as in \cref{tab:basis}, lie in the class $(4,1,1,0,0)$ and in bidegree $(2,2)$. They are of the form $x_0^4x_1x_2\,m_j\,u^b$ with $b_1+b_2=2$. These products determine the intersection form: by $G$-invariance, $\langle\omega_A,\omega_B\rangle$ vanishes unless the characters of $A$ and $B$ are inverse to each other, and \cref{prop:cup} below expresses the remaining values through a linear form on the class-$(4,1,1,0,0)$ part of $R_{(2,2)}$, which we now determine.

\begin{lemma}
\label{lem:socle}
The class-$(4,1,1,0,0)$ part of $R_{(2,2)}$ is one-dimensional. The linear form $\varphi$ on it normalized by $\varphi(x_0^4x_1x_2^4u_1^2)=1$ takes the following values on $x_0^4x_1x_2\,m_j\,u^b$:
\begin{center}
\renewcommand{\arraystretch}{1.1}
\begin{tabular}{cccc}
\toprule
$j$ & $u_1^2$ & $u_1u_2$ & $u_2^2$\\
\midrule
$0$ & $r(r+1)$ & $-r^2(r+1)$ & $r^3(r+1)$\\
$1$ & $-r^3$ & $r^3$ & $-r^3$\\
$2$ & $1$ & $-r^2$ & $r^4$\\
$3$ & $0$ & $0$ & $r^3(r-1)^2(r+1)$\\
$4$ & $(r-1)^2(r+1)$ & $0$ & $0$\\
\bottomrule
\end{tabular}
\end{center}
\end{lemma}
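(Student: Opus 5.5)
By \cref{lem:iso} with $a=(4,1,1,0,0)$, for which $I(a)=\varnothing$, the class-$(4,1,1,0,0)$ part of $R$ is $x_0^4x_1x_2\,A_\varnothing$. The degree condition for $R_{(2,2)}$ forces the cofactor to be linear in the $m_i$ and quadratic in $u_1,u_2$. So the plan is to prove that $(A_\varnothing)_{(1,2)}$ is one-dimensional and then to find the linear form $\varphi$ on it explicitly. As in the proof of \cref{prop:dims}, I will first eliminate $m_3,m_4$ using $f,g$, replacing them by $\lambda_3=-(rm_0+m_1-r^2m_2)$ and $\lambda_4=-(m_0+m_1-m_2)$. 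Then $(A_\varnothing)_{(1,2)}$ is the quotient of the $9$-dimensional space spanned by $m_iu_1^2,\ m_iu_1u_2,\ m_iu_2^2$ ($i=0,1,2$) by the span of the $10$ elements $\lambda_i\ell_i u_j$ with $i=0,\dots,4$ and $j=1,2$.

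Step one is to find the relations among these $10$ generators. The identity $\sum_i\lambda_i\otimes\ell_i=0$ from \cref{prop:dims} gives two of them, namely $\sum_i\lambda_i\ell_iu_j=0$ for $j=1,2$. So the span has dimension at most $8$, and the quotient has dimension at least $1$. For the upper bound I will show that the span has dimension exactly $8$. I would do this by exhibiting an explicit $8\times 8$ minor of the $10\times 9$ coefficient matrix and checking that its determinant is a product of powers of $r$, $r-1$ and $r+1$, hence nonzero for $r\in\Gm\smallsetminus\{\pm1\}$. I could instead take the cleaner route of solving the linear system directly, as in step two, and observing that it determines every unknown uniquely in terms of one of them.

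Step two computes $\varphi$. A linear functional $\varphi$ on the $9$-dimensional space descends to $(A_\varnothing)_{(1,2)}$ exactly when it kills every $\lambda_i\ell_iu_j$. I treat the nine values $\varphi(m_iu^b)$ as unknowns, normalize $\varphi(m_2u_1^2)=1$, and solve. The six relations $m_i\ell_iu_j$ for $i=0,1,2$ are easy to handle: they say that $\varphi(m_iu^b)$ behaves like a function that kills $\ell_i$ in the $u$-variables. For instance, $\ell_2=-(r^2u_1+u_2)$ gives $\varphi(m_2u_1u_2)=-r^2\varphi(m_2u_1^2)$ and $\varphi(m_2u_2^2)=r^4\varphi(m_2u_1^2)$, which is the geometric progression in the rows $j=0,1,2$ of the table. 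Each row therefore reduces to a single scalar $\varphi(m_iu_1^2)$. The four relations coming from $\lambda_3\ell_3u_j$ and $\lambda_4\ell_4u_j$ then tie these three scalars together. Since $\ell_3=u_1$ and $\ell_4=u_2$, the values $\varphi(m_3u^b)=\varphi(\lambda_3u^b)$ vanish whenever $u^b$ is divisible by $u_1$, and $\varphi(m_4u^b)$ vanishes whenever $u^b$ is divisible by $u_2$. This produces the zeros in rows $3$ and $4$ and determines the remaining two entries, $r^3(r-1)^2(r+1)$ and $(r-1)^2(r+1)$.

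Finally, the table entry for $x_0^4x_1x_2^4u_1^2$ is $m_2u_1^2$, consistent with the normalization. The step I expect to be the main obstacle is the rank computation in step one, that is, showing that the $10$ relations have rank exactly $8$ for every admissible $r$ rather than only generically. I would settle this by the direct solution in step two, checking that the system in the three scalars $\varphi(m_iu_1^2)$ has a one-dimensional solution space whenever $r\neq0,\pm1$. The denominators that could cause the rank to drop are exactly $r$, $r-1$ and $r+1$.
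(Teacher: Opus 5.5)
Your proposal is correct and follows the paper's proof essentially step by step. After eliminating $m_3,m_4$ via $f,g$, the relations $m_i\ell_iu_j$ for $i=0,1,2$ reduce each row to the scalar $v_i=\varphi(m_iu_1^2)$. The four relations $\lambda_3u_1u_j$ and $\lambda_4u_2u_j$ then collapse to the two equations $rv_0+v_1-r^2v_2=0$ and $r^2v_0+v_1-r^4v_2=0$, whose one-dimensional solution space spanned by $(r(r+1),-r^3,1)$ yields the table.

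The space of such functionals is the dual of the quotient, so this single computation already shows the dimension is exactly one. Your separate lower bound from $\sum_i\lambda_i\otimes\ell_i=0$ and the $8\times8$ minor are therefore unnecessary. The rank of the $2\times3$ system only requires $r\neq0,1$.
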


\begin{proof}
By \cref{lem:iso} this part is $x^a(A_\varnothing)_{(1,2)}$ with $a=(4,1,1,0,0)$. It is spanned by the $15$ monomials $m_ju^b$, subject to the relations $\ell_jm_ju_k=0$ and $fu^b=gu^b=0$. Put $v_j=\varphi(m_ju_1^2)$. For $j=0,1,2$ the relations $\ell_jm_ju_k=0$ ($k=1,2$) determine $\varphi(m_ju_1u_2)$ and $\varphi(m_ju_2^2)$ from $v_j$. The relations $fu^b=gu^b=0$ give $\varphi(m_3u^b)=\varphi(\lambda_3u^b)$ and $\varphi(m_4u^b)=\varphi(\lambda_4u^b)$. The remaining relations $u_1m_3u_k=0$ and $u_2m_4u_k=0$ then reduce to
\[rv_0+v_1-r^2v_2=0,\qquad r^2v_0+v_1-r^4v_2=0 ,\]
the other two being consequences of these. For $r\notin\{0,\pm1\}$ the solution space is spanned by $(v_0,v_1,v_2)=(r(r+1),-r^3,1)$, which proves one-dimensionality and gives the table.
\end{proof}

The next proposition relates the intersection form of $X$ to the linear form $\varphi$. It follows from the toric version of the Carlson--Griffiths cup product formula due to Villaflor \cite{VL24}, applied to the hypersurface $Y$, together with the comparison between the cohomology of $X$ and that of $Y$ given by the isomorphism $\psi$ of \cref{thm:cayley}.

\begin{proposition}
\label{prop:cup}
There is a constant $\kappa\in\Gm$ such that, for all monomials $A,B\in S_{(1,1)}$,
\[\langle\omega_A,\omega_B\rangle=\begin{cases}\kappa\,\varphi(AB)&\text{if }k(A)+k(B)=0,\\ 0&\text{otherwise.}\end{cases}\]
where $\varphi$ is the linear form of \cref{lem:socle} and character addition is taken in $(\ZZ/6\ZZ)^2$. Moreover, the generators $\omega_1\in \operatorname{H}^{2,0}(X)$ and
$\omega_6\in \operatorname{H}^{0,2}(X)$ satisfy $
\kappa':=\langle\omega_1,\omega_6\rangle\neq0.$
\end{proposition}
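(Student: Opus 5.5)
The plan is to transport the question to the hypersurface $Y\subset\PP(\cE)$ through the isomorphism $\psi$ of \cref{thm:cayley}, and then invoke the toric Carlson--Griffiths formula of Villaflor \cite{VL24}.

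\textbf{Step 1: the vanishing statement.} For $A,B\in S_{(1,1)}$ the classes $\omega_A,\omega_B$ are of type $(1,1)$. For monomials they are $G$-eigenvectors with characters $\zeta_6^{k(A)}$ and $\zeta_6^{k(B)}$ by \eqref{eq:char}. Since $G$ acts by automorphisms, $\langle g^*x,g^*y\rangle=\langle x,y\rangle$, so $\langle\omega_A,\omega_B\rangle=\zeta_6^{k_j(A)+k_j(B)}\langle\omega_A,\omega_B\rangle$ for $j=1,2$. Hence the pairing vanishes unless $k(A)+k(B)=0$.

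\textbf{Step 2: comparison of pairings on $X$ and $Y$.} Here $\psi=j_!\circ\pi^*$ is an isomorphism of rational Hodge structures. I would show that
\[
\langle\psi x,\psi y\rangle_Y=c\,\langle x,y\rangle_X
\]
for a fixed nonzero constant $c$. The argument uses the projection formula: $\psi x\cdot\psi y=j_!(\pi^*x\cdot j^*j_!\pi^*y)$, and $j^*j_!$ is multiplication by the normal class of $X\times\PP^1$ in $Y$. Since this normal class is a fixed combination of $H$ and $F$, pushing forward to $X$ gives $c\,x\cdot y$ with $c$ universal. This is the content of \cite[Theorem~5.2]{VL24} (up to sign), and I would cite it there rather than redo it.

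\textbf{Step 3: the cup product on $Y$.} By Villaflor's toric Carlson--Griffiths theorem \cite{VL24}, for $A\in S_{(q,1)}$ and $B\in S_{(2-q,1)}$ the product of the corresponding residue components on $Y$ equals $c_q\,\mathrm{Tr}(AB)$. Here $\mathrm{Tr}$ is the Grothendieck residue trace on the socle of $R$, which sits in bidegree $(2,2)=2(q+2)\beta-2\beta_0$ with $q$ adjusted, and $c_q\neq0$ is a universal constant.

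By \cref{lem:socle} and the $G$-grading, the product $AB$ with $k(A)+k(B)=0$ lands in the one-dimensional class-$(4,1,1,0,0)$ part of $R_{(2,2)}$. This part must be the socle, because the pairing is perfect and nondegeneracy forces the socle to be one-dimensional. So $\mathrm{Tr}$ restricts there to a nonzero multiple of $\varphi$, and we set $\kappa=c\,c_1\cdot(\text{that multiple})$.

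\textbf{Step 4: the claim $\kappa'\neq0$.} The same formula with $q=0$ gives $\langle\omega_1,\omega_6\rangle$ as a nonzero constant times $\varphi(x_0\cdot x_0^3x_1x_2^4u_1^2)=\varphi(x_0^4x_1x_2^4u_1^2)=1$. Alternatively, and more robustly, $\omega_1$ spans $\operatorname{H}^{2,0}$ and $\omega_6$ spans $\operatorname{H}^{0,2}=\overline{\operatorname{H}^{2,0}}$. So $\omega_6=c\,\bar\omega_1$ for some $c\neq0$, and then
\[
\langle\omega_1,\omega_6\rangle=c\int\omega_1\wedge\bar\omega_1\neq0
\]
by the Hodge--Riemann relations.

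\textbf{Main obstacle.} The hardest step is Step 3: matching the normalizations of the residue trace with the chosen $\varphi$, and checking that Villaflor's hypotheses hold for the quasi-smooth ample $Y$. The second concern is keeping the constants uniform in $A,B$, which holds because all products lie in the same one-dimensional space. For the $(1,1)$ part, I would cite the theorem directly and absorb all constants into $\kappa$.
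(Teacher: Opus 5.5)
\textbf{Gap: Villaflor's theorem does not apply to $Y$ as given.} Your outline matches the paper's: vanishing by $G$-invariance, Villaflor's toric Carlson--Griffiths formula on $Y$, one-dimensionality of the class-$(4,1,1,0,0)$ part of $R_{(2,2)}$ from \cref{lem:socle}, and the projection formula to pass from $Y$ to $X$. Steps 1 and 2 and the Hodge--Riemann argument in Step 4 are fine.

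The problem is Step 3. You propose to cite \cite[Theorem~1.4]{VL24} directly for the quasi-smooth ample hypersurface $Y$. That theorem, however, needs $Y$ to be \emph{non-degenerated}: for every torus orbit $O$ of $\PP(\cE)$, the intersection $Y\cap O$ must be a smooth hypersurface of $O$ or empty. This fails in the coordinates $(x,z,u)$. Indeed, $Y$ contains the torus-fixed points $(0:0:0:1:0)\times(0:1)$ and $(0:0:0:0:1)\times(1:0)$; at the first one $\Phi=g=0$. At these points every $y\,\partial\Phi/\partial y$ vanishes. The functional $c(P)$, defined by $PM\equiv c(P)\operatorname{Hess}(\Phi)$ modulo $J_0(\Phi)=(y\,\partial\Phi/\partial y)_y$, therefore has no foundation there. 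Quasi-smoothness and ampleness are not enough, so you cannot ``cite the theorem directly and absorb constants''.

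\textbf{How to close it.} Change the toric structure on the $\PP^1$ factor, as the paper does. Put $u_1=v_1+v_2$ and $u_2=t_1v_1+t_2v_2$ with $t_1\neq t_2$ avoiding $0,-1,-r,-r^2$. Then $\Phi=v_1(f+t_1g)+v_2(f+t_2g)$, and all five pure powers $m_i$ have nonzero coefficients in both $f+t_1g$ and $f+t_2g$.

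\begin{itemize}
\item Check non-degeneracy orbit by orbit. The key input is that the coefficient pairs $(r,1),(1,1),(-r^2,-1),(1,0),(0,1)$ of $m_0,\dots,m_4$ in $(f,g)$ are pairwise independent. This forces a relation $\lambda\nabla f+\mu\nabla g=0$ on an orbit to be trivial whenever at least two weighted coordinates are nonzero.
\item Verify that the Jacobian ideal $J$, and hence $R$, is the same in both coordinate systems. Also verify that the toric volume form changes only by the factor $t_2-t_1$, so the residues $\omega_A$ change by a common constant.
\item Note that Villaflor's $c(\cdot)$ lives on $S/J_0$, not on $R=S/J$. It descends to $R$ because $M_v=x_0x_1x_2z_3z_4v_1v_2$ is divisible by every variable, so $M_vJ\subseteq J_0(\Phi)$.
\end{itemize}

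Only after these steps does \cref{lem:socle} make $c$ restricted to the class-$(4,1,1,0,0)$ part a multiple of $\varphi$. That multiple is nonzero by nondegeneracy of the intersection form on $\operatorname{H}^{1,1}_\prim$.
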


\begin{proof}
By $G$-invariance, the intersection vanishes if
$k(A)+k(B)\neq0$. Assume henceforth that $k(A)+k(B)=0$.
Then $AB\in S_{(2,2)}$ has residual class
$a=(4,1,1,0,0)$, since
$k(AB)=k(A)+k(B)-(5,5)=(1,1)$. For such pairs we apply the cup product formula of
\cite[Theorem~1.4]{VL24} to the ample quasi-smooth fourfold $Y=\{\Phi=0\}\subset\PP(\cE)$.
We carry out the computation in toric coordinates for which
$Y$ is non-degenerated, meaning that its intersection with
every torus orbit is a smooth hypersurface or is empty. This property fails in the original coordinates, since $Y$
contains the torus-fixed points
\[
(0:0:0:1:0)\times(0:1),
\qquad
(0:0:0:0:1)\times(1:0).
\]
We therefore choose distinct
$t_1,t_2\in\CC\setminus\{0,-1,-r,-r^2\}$ and introduce
coordinates $v_1,v_2$ by
\[
u_1=v_1+v_2,\qquad u_2=t_1v_1+t_2v_2.
\]
Putting $f'=f+t_1g$ and $g'=f+t_2g$, we obtain
$\Phi=v_1f'+v_2g'$. Our choice of $t_1,t_2$ ensures that
all five pure powers occur in both $f'$ and $g'$ with
nonzero coefficients. We verify non-degeneracy in these coordinates.
Let $T_\sigma$ be a torus orbit of $\PW$.
On $T_\sigma\times\{v_1v_2\neq0\}$, the equation is
$f'+sg'=0$, where $s=v_2/v_1$.
At a singular point, differentiation with respect to $s$
gives $g'=0$, and hence $f'=0$. Since $t_1\neq t_2$,
this implies $f=g=0$. The remaining derivative equations give
a relation between the gradients restricted to the orbit:
\[
\lambda\nabla f+\mu\nabla g=0,
\qquad
(\lambda,\mu)=(1+s,t_1+st_2)\neq(0,0).
\]
To rule this out, lift to the coordinate torus over $T_\sigma$.
The coefficient pairs of $m_0,\ldots,m_4$ in $(f,g)$ are
\[
(r,1),\quad (1,1),\quad (-r^2,-1),\quad (1,0),\quad (0,1),
\]
which are pairwise independent because $r\notin\{0,\pm1\}$.
Since the $m_i$ are pure powers, the derivative equations
therefore force $\lambda=\mu=0$ whenever at least two
weighted coordinates are nonzero, a contradiction.
If only one is nonzero, then $X\cap T_\sigma=\varnothing$. Finally, on $T_\sigma\times\{v_1=0\}$ and
$T_\sigma\times\{v_2=0\}$, the equation reduces to $g'=0$
and $f'=0$, respectively. These are sums of pure powers
with nonzero coefficients, whose zero loci on the orbit
are smooth or empty. Thus $Y$ is non-degenerated with
respect to the coordinates $(x,v)$. We regard $S$ and $\Phi$ as fixed, expressing all their elements,
including $A$ and $B$, in the new coordinates. The chain rule gives
\[
\begin{pmatrix}
\partial\Phi/\partial v_1\\
\partial\Phi/\partial v_2
\end{pmatrix}
=
\begin{pmatrix}
1&t_1\\
1&t_2
\end{pmatrix}
\begin{pmatrix}
\partial\Phi/\partial u_1\\
\partial\Phi/\partial u_2
\end{pmatrix}.
\]
The remaining partial derivatives are unchanged. Since
$t_1\neq t_2$, both coordinate systems therefore give the same
Jacobian ideal $J$ and quotient $R=S/J$.
With compatible normalizations, the toric volume forms satisfy
$\Omega_u=(t_2-t_1)\Omega_v$. Consequently,
\[
\widetilde\omega_A^{\,v}
:=
\operatorname{Res}\left(\frac{A\Omega_v}{\Phi^3}\right)^{2,2},
\qquad
\psi(\omega_A)=\omega_A^{\,u}=(t_2-t_1)\widetilde\omega_A^{\,v}.
\]
Put $M_v=x_0x_1x_2z_3z_4v_1v_2$.
By \cite[Theorem~1.4]{VL24},
$\int_Y\widetilde\omega_A^{\,v}\smile\widetilde\omega_B^{\,v}$
is a fixed nonzero multiple of $c(AB)$, where $c(P)$ is determined by
\[
PM_v\equiv c(P)\operatorname{Hess}_v(\Phi)
\pmod{J_{0,v}(\Phi)},
\qquad P\in S_{(2,2)}.
\]
Here $\operatorname{Hess}_v(\Phi)$ is the toric Hessian \cite[Definition~6.3]{VL24} and $J_{0,v}(\Phi)=(y\,\partial\Phi/\partial y)_y$. These are computed in the coordinates
$y\in\{x_0,x_1,x_2,z_3,z_4,v_1,v_2\}$.
Since $M_v$ is divisible by every coordinate,
$M_vJ\subseteq J_{0,v}(\Phi)$.
Thus $c$ descends to $R_{(2,2)}$, and its restriction to the
class-$a$ component is a scalar multiple of $\varphi$ by \cref{lem:socle}. On the other hand, for $x,x'\in \operatorname{H}^2(X,\QQ)_\prim$ the projection formula gives $\int_Y\psi(x)\cup\psi(x')=\int_{X\times\PP^1}\pi^*(x\cup x')\cup c_1(\mathcal{N})$, where $\mathcal{N}$ is the normal bundle of $X\times\PP^1$ in $Y$. Since $c_1(\mathcal{N})$ has degree $-1$ on the fibres of $\pi$, this is $-\langle x,x'\rangle$. Hence $\kappa$ exists, and it is nonzero because the intersection form is nondegenerate on $\operatorname{H}^{1,1}_\prim$. The last assertion holds because $\operatorname{H}^{2,0}$ pairs nondegenerately with $\operatorname{H}^{0,2}$ and trivially with $\operatorname{H}^{1,1}$.
\end{proof}

We now collect these computations. For $c\in\CC$ we write
\[\hyp{c}=\begin{pmatrix}0&c\\ c&0\end{pmatrix}\]
for the Gram matrix of a pair of isotropic vectors with product $c$. With this notation, the intersection form in the basis of \cref{tab:basis} is as follows.

\begin{theorem}
\label{thm:gram}
Order the basis of \cref{tab:basis} by the ten partner pairs, namely $(\omega_1,\omega_6)$, $(\omega_2,\omega_4)$ and $(\omega_3,\omega_5)$, followed by the pairs $(\omega_{2j+1},\allowbreak\omega_{2j+2})$ with $3\leq j\leq9$. Then the Gram matrix of the intersection form is block diagonal, equal to
\[
\begin{aligned}
\kappa\Bigl[
&\hyp{-\tfrac23}
 \oplus \hyp{2r^3(1-r)}
 \oplus \hyp{2r^2(1+r)}\\
&\oplus \hyp{r(r+1)}
 \oplus \hyp{-r^3}
 \oplus \hyp{1}\\
&\oplus \hyp{r^3(r-1)^2(r+1)}^{\oplus2}
 \oplus \hyp{(r-1)^2(r+1)}^{\oplus2}
\Bigr],
\end{aligned}
\]
where $\kappa$ is the constant of \cref{prop:cup} and the Gram matrix of the polarization $Q$ is its negative. A common rescaling $\omega_i\mapsto\kappa^{-1/2}\omega_i$
removes the factor $\kappa$. In the monomial basis of the two-dimensional pieces, the pairing $\bigl(\operatorname{H}^{1,1}\cap H_{(5,5)}\bigr)\times\bigl(\operatorname{H}^{1,1}\cap H_{(1,1)}\bigr)\to\CC$ is given by
\[\bigl(\langle\omega_{A},\omega_{B}\rangle\bigr)_{A\in\{x_0^7u_1,\ x_0x_1^3u_1\},\ B\in\{x_0^3x_1x_2u_1,\ x_0^3x_1x_2u_2\}}
=\kappa\begin{pmatrix} r(r+1) & -r^2(r+1)\\ -r^3 & r^3\end{pmatrix}.\]
\end{theorem}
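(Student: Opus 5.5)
The Gram matrix is assembled from \cref{prop:cup} and \cref{lem:socle}, using the character bookkeeping of \cref{tab:basis}. First I record that, by \cref{prop:cup}, $\langle\omega_i,\omega_j\rangle$ vanishes unless $k(A_i)+k(A_j)=0$ in $(\ZZ/6\ZZ)^2$. I then read off from the table which pairs have inverse characters.

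\emph{Block structure.} The eigenspaces $H_{(5,5)}=\langle\omega_1,\omega_2,\omega_3\rangle$ and $H_{(1,1)}=\langle\omega_4,\omega_5,\omega_6\rangle$ pair only with each other. Each one-dimensional character $k$ among the indices $7,\dots,20$ pairs only with the unique index carrying $-k$, namely the partner $i^*$ listed in the table. For example, $(1,5)$ pairs with $(5,1)$ and $(3,5)$ with $(3,1)$. Since $k\neq -k$ for all these characters (they have order $6$), every $\omega_i$ is isotropic, and the fourteen classes split into seven hyperbolic blocks. For each such pair the entry is $\kappa\,\varphi(A_iA_{i^*})$. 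The product $A_iA_{i^*}$ is a monomial of the form $x_0^4x_1x_2m_ju^b$. To see this, reduce $x$-exponents modulo $e$; for instance $A_7A_8=x_0^{10}x_1x_2u_1^2=x_0^4x_1x_2\,m_0u_1^2$, giving $r(r+1)$. Similarly $A_{13}A_{14}=x_0^4x_1x_2\,m_3u_2^2$ gives $r^3(r-1)^2(r+1)$. The values are then read off \cref{lem:socle}, reproducing the constants $c_i$ in the table.

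\emph{The $3\times3$ block.} Here I compute all nine products $A_iA_j$ with $i\in\{1,2,3\}$ and $j\in\{4,5,6\}$, again via \cref{lem:socle}. The $(2,0)$ and $(0,2)$ classes pair trivially with $H^{1,1}$ by type, so $\langle\omega_1,\omega_4\rangle=\langle\omega_1,\omega_5\rangle=\langle\omega_2,\omega_6\rangle=\langle\omega_3,\omega_6\rangle=0$. One caveat: $\omega_1$ and $\omega_6$ have different $q$, so \cref{prop:cup} as stated (monomials in $S_{(1,1)}$) does not directly give their pairing. I would take the value $-\tfrac23\kappa$ for $\langle\omega_1,\omega_6\rangle$ from the general Carlson--Griffiths formula of \cite{VL24}, which treats pairings of $R_{(0,1)}$ with $R_{(2,1)}$ by the same socle $c(\cdot)$ up to an explicit combinatorial factor depending on $q$. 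I would then fix the factor $-2/3$ by comparing with the normalization of $\kappa$. Concretely, $A_1A_6=x_0^4x_1x_2^4u_1^2$ has $\varphi=1$, and the factor $-\tfrac23$ should come from the ratio of the coefficients $\frac{(-1)^{\ast}}{q!}$-type constants in Villaflor's theorem for $q=0$ versus $q=1$. This is the step I expect to be the main obstacle, since it requires tracking constants in \cite[Theorem~1.4]{VL24} rather than just proportionality.

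\emph{The $(1,1)$ part.} For the monomial pairing displayed at the end, take products such as $x_0^7u_1\cdot x_0^3x_1x_2u_1=x_0^4x_1x_2\,m_0u_1^2$, giving $r(r+1)$. The product $x_0x_1^3u_1\cdot x_0^3x_1x_2u_2$ gives $m_1u_1u_2$, hence $r^3$, and the other two entries follow in the same way. Bilinearity then yields the pairings of $A_2,A_3$ with $A_4,A_5$. For instance,
\[\langle\omega_3,\omega_4\rangle=\kappa\bigl(r\cdot r(r+1)-r^2(r+1)\bigr)=0,\]
\[\langle\omega_3,\omega_5\rangle=\kappa\bigl(r^2(r+1)+r^2(r+1)\bigr)=2r^2(1+r)\kappa.\]
Similarly $\langle\omega_2,\omega_5\rangle=0$ and $\langle\omega_2,\omega_4\rangle=2r^3(1-r)\kappa$, which confirms that the basis adapted to $\iota$ diagonalizes the block. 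Finally, the polarization $Q=-\langle\,,\rangle$ gives the negated Gram matrix, and rescaling by $\kappa^{-1/2}$ removes $\kappa$.
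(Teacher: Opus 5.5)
Your treatment of the seven hyperbolic blocks for $\omega_7,\dots,\omega_{20}$, of the entries that vanish by character or Hodge type, and of the $2\times2$ monomial block with the passage to $A_2,\dots,A_5$ is correct and coincides with the paper's proof. All the products $A_iA_{i^*}$ and the values $\varphi(A_2A_4)=2r^3(1-r)$, $\varphi(A_3A_5)=2r^2(1+r)$, $\varphi(A_2A_5)=\varphi(A_3A_4)=0$ check.

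\textbf{The gap is the first block $\hyp{-\tfrac23}$.} You do not prove $\langle\omega_1,\omega_6\rangle=-\tfrac23\kappa$; you only say the factor ``should come from'' the constants of \cite[Theorem~1.4]{VL24}. Nothing established so far links $\kappa'=\langle\omega_1,\omega_6\rangle$ to $\kappa$. Indeed, \cref{prop:cup} only says the pairing is a fixed multiple of $c(AB)$ for $A,B\in S_{(1,1)}$, and for $\omega_1,\omega_6$ it gives only $\kappa'\neq0$. So the one entry that is not a table look-up, including its sign, is left as an expectation.

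The paper closes this in \cref{rem:kappa-relation} without tracking any normalization. It differentiates the identity $\langle\omega_1,\omega_4\rangle=0$ in $r$, using flatness of the cup product and Griffiths transversality. By \cref{lem:ivhs}, $\nabla_{d/dr}\omega_1\equiv\frac2r\omega_2\pmod{F^2}$. The same computation, namely $-3\,\omega_{A_4\partial_r\Phi}$ evaluated with \cref{lem:socle}, gives $\nabla_{d/dr}\omega_4\equiv6r^2(1-r)\omega_6\pmod{F^1}$. By Hodge type only these components contribute, so
\[0=\tfrac2r\langle\omega_2,\omega_4\rangle+6r^2(1-r)\langle\omega_1,\omega_6\rangle=r^2(1-r)(4\kappa+6\kappa'),\]
and $\kappa'=-\tfrac23\kappa$ since $r\neq0,1$.

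Your route can also be completed, but it needs an actual check. Suppose the toric formula depends on the pole orders $q'+1$ and $5-q'$ through the Carlson--Griffiths factor $(-1)^{q'}/(q'!\,(4-q')!)$. Then the ratio between the pairing of pole orders $(2,4)$ and that of $(3,3)$ is $\frac{-1/(1!\,3!)}{1/(2!\,2!)}=-\frac23$. The other normalizations are common to both pairings and cancel: the factor $(t_2-t_1)^2$ from $\Omega_u=(t_2-t_1)\Omega_v$, the sign from the normal bundle in $\psi$, and the Hessian normalization. This has to be verified against the exact statement of \cite[Theorem~1.4]{VL24}, sign included, and your proposal does not do that.
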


\begin{proof}
Hodge types and $G$-invariance leave only the partner pairs of \cref{tab:basis} and, inside $H_{(5,5)}\times H_{(1,1)}$, the $2\times2$ block above. The first block follows from \cref{rem:kappa-relation}. All remaining entries follow from \cref{prop:cup} and \cref{lem:socle}. For instance $A_7A_8=x_0^4x_1x_2\,m_0u_1^2$ and $A_{13}A_{14}=A_{15}A_{16}=x_0^4x_1x_2\,m_3u_2^2$. For the adapted basis one computes $\varphi(A_2A_4)=2(r-1)v_1=2r^3(1-r)$, $\varphi(A_3A_5)=2rv_0=2r^2(1+r)$ and $\varphi(A_2A_5)=\varphi(A_3A_4)=0$.
\end{proof}

The partner index $i^*$ in the last column of \cref{tab:basis} has a natural interpretation in terms of complex conjugation, and the basis is also well adapted to the Hermitian form $h$, as the following corollary shows.

\begin{corollary}
\label{cor:herm}
The Hermitian form $h$ is diagonal in the basis $\omega_1,\dots,\omega_{20}$, with $h(\omega_1,\omega_1)>0$, $h(\omega_6,\omega_6)>0$ and $h(\omega_i,\omega_i)<0$ for $i\neq1,6$. Complex conjugation maps $\CC\omega_i$ onto $\CC\omega_{i^*}$.
\end{corollary}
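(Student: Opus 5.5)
The plan is to argue through characters of $G$ and the Hodge decomposition, and not through explicit residues.

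\emph{Conjugation.} Every $\tau\in G$ acts on $\operatorname{H}^2(X,\CC)_\prim=L\otimes\CC$ by a real operator, since it is induced from $L$. So if $\tau^*\omega=\zeta_6^{k}\omega$, then $\tau^*\bar\omega=\zeta_6^{-k}\bar\omega$. Hence conjugation maps $H_k$ onto $H_{-k}$.

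\emph{One-dimensional pieces.} Inspection of \cref{tab:basis} shows that for $i\geq 7$ the partner $\omega_{i^*}$ is the unique basis vector of character $-k(A_i)$. Both eigenspaces $H_{k}$ and $H_{-k}$ are one-dimensional by \cref{prop:dims}, so $\overline{\CC\omega_i}=\CC\omega_{i^*}$.

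\emph{Three-dimensional pieces.} Complex conjugation exchanges $\operatorname{H}^{2,0}$ and $\operatorname{H}^{0,2}$, so $\overline{\CC\omega_1}=\CC\omega_6$. For the remaining vectors I would use $h$ together with $\iota$. By \cref{thm:gram} the space $H_{(1,1)}\cap\operatorname{H}^{1,1}$ is spanned by $\omega_4,\omega_5$, and conjugation carries $H_{(5,5)}\cap\operatorname{H}^{1,1}$ onto it. The automorphism $\iota$ is also real, and by \cref{tab:basis} it acts with eigenvalue $-1$ on $\omega_2,\omega_4$ and $+1$ on $\omega_3,\omega_5$. Since $\iota$ normalizes $G$ with $\iota\tau_1\iota=\tau_2$, and the characters $(5,5)$ and $(1,1)$ are symmetric, the eigenspaces $H_{(5,5)}$ and $H_{(1,1)}$ are $\iota$-stable. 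The two $\iota$-eigenvalues are different on $\omega_2$ and $\omega_3$, and conjugation preserves $\iota$-eigenspaces. Therefore $\overline{\omega_2}$ is the unique line in $H_{(1,1)}\cap\operatorname{H}^{1,1}$ with $\iota$-eigenvalue $-1$, which is $\CC\omega_4$. In the same way $\overline{\omega_3}\in\CC\omega_5$. The step I expect to need care is checking these $\iota^*$ eigenvalues, which come from how $\iota$ acts on residues (\cref{prop:iota}); I take them from the table.

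\emph{Diagonality of $h$.} We have $h(\omega_i,\omega_j)=\langle\omega_i,\overline{\omega_j}\rangle=\bar c\,\langle\omega_i,\omega_{j^*}\rangle$, where $\overline{\omega_j}=c\,\omega_{j^*}$. By \cref{thm:gram} the Gram matrix is block-diagonal in the partner pairs with zero diagonal, so $\langle\omega_i,\omega_{j^*}\rangle=0$ unless $j^*=i^*$, that is, unless $i=j$. The signs then come from the Hodge--Riemann relations: $h$ is positive on $\operatorname{H}^{2,0}\oplus\operatorname{H}^{0,2}$, which contains $\omega_1$ and $\omega_6$, and negative definite on $\operatorname{H}^{1,1}_\prim$, which contains all the other basis vectors.
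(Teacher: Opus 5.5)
Your proof is correct. The conjugation part is essentially the paper's argument. The diagonality of $h$, however, is obtained by a different route.

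In the paper, diagonality is proved directly from the sesquilinear form. Automorphisms preserve $h$, so eigenvectors for distinct characters of $G$, or for distinct eigenvalues of $\iota^*$, are $h$-orthogonal. The Hodge decomposition is also $h$-orthogonal, which separates $\omega_1$ and $\omega_6$ from $\omega_2,\dots,\omega_5$. No information about the bilinear intersection form is used.

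You instead prove the conjugation statement first, and then transport the block structure of the Gram matrix of \cref{thm:gram} through $h(x,y)=\langle x,\overline{y}\rangle$. This makes diagonality a formal consequence of the second assertion of the corollary. It also gives $h(\omega_i,\omega_i)=c\,\langle\omega_i,\omega_{i^*}\rangle$ when $\overline{\omega_i}=c\,\omega_{i^*}$. Together with the Hodge--Riemann signs, this constrains the conjugation constants $c$ in terms of the entries of \cref{thm:gram}.

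The cost is that you import the vanishing $\langle\omega_2,\omega_5\rangle=\langle\omega_3,\omega_4\rangle=0$ from \cref{thm:gram}. That vanishing rests on the same $\iota$-symmetry, or on the computation $\varphi(A_2A_5)=\varphi(A_3A_4)=0$. So the paper's route is more economical and independent of the residue computations.

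Two harmless slips:
\begin{itemize}
\item With $\overline{\omega_j}=c\,\omega_{j^*}$ one gets $h(\omega_i,\omega_j)=c\,\langle\omega_i,\omega_{j^*}\rangle$, not $\bar c\,\langle\omega_i,\omega_{j^*}\rangle$. Only vanishing matters, so this does not affect the argument.
\item The fact that $\omega_4,\omega_5$ span $\operatorname{H}^{1,1}\cap H_{(1,1)}$ comes from \cref{thm:basis}, not \cref{thm:gram}.
\end{itemize}
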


\begin{proof}
Automorphisms of $X$ preserve $h$, so eigenvectors for distinct characters of $G$, or for distinct eigenvalues of the involution $\iota^*$, are $h$-orthogonal, and the Hodge decomposition is $h$-orthogonal. Inside $\operatorname{H}^{1,1}\cap H_{(5,5)}$ and $\operatorname{H}^{1,1}\cap H_{(1,1)}$ the eigenspaces of $\iota^*$ are lines by \cref{prop:iota}. Conjugation maps $H_k$ onto $H_{-k}$, $\operatorname{H}^{p,q}$ onto $\operatorname{H}^{q,p}$ and preserves the eigenspaces of $\iota^*$. The signs are the Hodge--Riemann relations.
\end{proof}

\medskip

Finally, we use the action of $G$ to decompose the rational primitive cohomology into pieces defined over $\QQ$. The eigenvalues of $G$ on $L\otimes\CC$ are sixth roots of unity, and complex conjugation, which generates the Galois group of $\QQ(\zeta_6)$ over $\QQ$, maps the eigenspace $H_k$ onto $H_{-k}$, because $\overline{\zeta_6^{\,j}}=\zeta_6^{-j}$. Since the action of \(G\) is defined over \(\QQ\), each eigenspace \(H_k\) is defined over \(\QQ(\zeta_6)\). Hence each sum $H_k\oplus H_{-k}$ is defined over $\QQ$, and the isotypic components of the representation of $G$ on $L\otimes\QQ$ are the intersections of these sums with $L\otimes\QQ$, since every character of \cref{tab:basis} has order six, $k\neq-k$, and the pairs $\{k,-k\}$ are the partner pairs of \cref{tab:basis}. We obtain an orthogonal decomposition
\begin{equation}
\label{eq:rational}
L\otimes\QQ=V_1\oplus V_2\oplus\dots\oplus V_8 ,
\end{equation}
where $V_1\otimes\CC=H_{(5,5)}\oplus H_{(1,1)}=\langle\omega_1,\dots,\omega_6\rangle$ has Hodge numbers $(1,4,1)$ and $V_j\otimes\CC=\langle\omega_{2j+3},\omega_{2j+4}\rangle$ is of type $(1,1)$ for $j=2,\dots,8$. The pieces $V_2,\dots,V_8$ are $\QQ$-irreducible.

The piece $V_1$ is not of type $(1,1)$, but it can be refined by means of the involution $\iota$, which does not belong to $G$. The following proposition describes the action of $\iota$ on the basis of \cref{tab:basis}. In particular, it explains the choice of the elements $A_2,\dots,A_5$ in the two three-dimensional eigenspaces.

\begin{proposition}
\label{prop:iota}
Extend $\iota$ to $S$ by $\iota^*u_1=r^{-1}u_2$ and $\iota^*u_2=ru_1$, so that $\iota^*\Phi=\Phi$. Then $\iota^*$ acts by $-1$ on $\omega_1,\omega_2,\omega_4,\omega_6$ and by $+1$ on $\omega_3,\omega_5$, and it permutes the lines $\CC\omega_7,\dots,\CC\omega_{20}$ as indicated in \cref{tab:basis}. In particular $\iota$ preserves $V_1$ and $V_2$ and exchanges $V_3\leftrightarrow V_4$, $V_5\leftrightarrow V_7$ and $V_6\leftrightarrow V_8$.
\end{proposition}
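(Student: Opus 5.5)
Since $\Phi$ is not literally preserved by $\iota$ acting only on the $x,z$ variables ($\iota^*f=rg$, $\iota^*g=r^{-1}f$), I first check that the extension $\iota^*u_1=r^{-1}u_2$, $\iota^*u_2=ru_1$ gives $\iota^*\Phi=r^{-1}u_2\cdot rg+ru_1\cdot r^{-1}f=\Phi$. Hence $\iota$ extends to an automorphism of $\PP(\cE)$ preserving $Y$ and $X\times\PP^1$. It commutes with $\psi=j_!\circ\pi^*$, and it preserves the Jacobian ideal $J$, so it acts on $R$. The residue map is equivariant in the form $\iota^*\omega_A=\det(\iota)\,\omega_{\iota^*A}$, where $\det(\iota)$ is the determinant on the seven Cox variables. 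This holds because $\Omega$ is the contraction of $dx_0\wedge\dots\wedge du_2$ by the Euler fields, and these are preserved by linear maps respecting the grading; this is the same argument that gave \eqref{eq:char}. A direct computation gives
\[
\det(\iota)=\det\begin{pmatrix}0&-\theta^2\\-\theta^{-2}&0\end{pmatrix}\cdot\det\begin{pmatrix}0&\theta^3\\\theta^{-3}&0\end{pmatrix}\cdot\det\begin{pmatrix}0&r^{-1}\\r&0\end{pmatrix}=(-1)(-1)(-1)=-1 .
\]
Up to the sign ambiguity of how $\iota$ acts on $\psi$, which I would fix by consistency, this gives $\iota^*\omega_A=-\omega_{\iota^*A}$.

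Next I would compute $\iota^*A_i$ in $R$ for $i\le 6$. For $A_1=x_0$ we get $\iota^*\omega_1=-\omega_1$. For $A_6=x_0^3x_1x_2^4u_1^2$ the image is $x_0^3(-\theta^2x_2)(-\theta^{-2}x_1)^4r^{-2}u_2^2=-\theta^{-6}r^{-2}x_0^3x_1^4x_2u_2^2$. Since $H^{0,2}$ is one-dimensional, I would reduce this in $R$, using the table of \cref{lem:socle} or the relations in $A_\varnothing$, to a multiple of $A_6$; the expected coefficient is $+1$, which gives $-1$ on $\omega_6$. Alternatively, this is forced by complex conjugation, since $\overline{\omega_1}\in\CC\omega_6$ and $\iota$ is defined over $\QQ$ on $L$. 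For $A_4$ and $A_5$, the map sends $x_0^3x_1x_2\mapsto -x_0^3x_1x_2$ (the factors $\theta^2\theta^{-2}$ cancel and the two minus signs give $+$, times $-1$... I would recompute this carefully) and $ru_1\pm u_2\mapsto u_2\pm ru_1$. So $A_4$ is sent to $\pm A_4$ and $A_5$ to $\mp A_5$, and multiplying by $\det=-1$ gives the eigenvalues of the table. For $A_2$ and $A_3$, I would use the relations $x_0^5\ell_0=0$ and $x_1^2\ell_1=0$ in the class $(1,0,0,0,0)$ piece $(A_\varnothing)_{(1,1)}$ to rewrite $x_0x_2^3u_2$ and $x_0^7u_2$ in terms of $m_0u_1$ and $m_1u_1$. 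This linear algebra inside the two-dimensional quotient of \cref{prop:dims}(3) is the main obstacle, and it is exactly what determines the specific combination $rx_0^7u_1+2x_0x_1^3u_1$. As a consistency check, the eigenvectors must be $h$-orthogonal, and the Gram entries $\varphi(A_2A_5)=\varphi(A_3A_4)=0$ of \cref{thm:gram} should confirm the pairing of eigenvalues.

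For $i\ge7$, $\iota$ conjugates $\tau_1$ and $\tau_2$, so $\iota^*$ maps $H_{(k_1,k_2)}$ onto $H_{(k_2,k_1)}$. Since these eigenspaces are lines, $\iota^*\CC\omega_i=\CC\omega_j$, where $j$ is the index with the swapped character; reading this off \cref{tab:basis} gives the listed permutation (for example, $(1,5)\leftrightarrow(5,1)$ gives $7\leftrightarrow8$). The statements about the $V_j$ then follow because $V_j\otimes\CC$ is spanned by the partner pairs. $V_1$ and $V_2$ are stable, since $(1,5)$ and $(5,1)$ form a pair. For the others, $\{9,10\}\mapsto\{11,12\}$ gives $V_3\leftrightarrow V_4$, and $\{13,14\}\mapsto\{17,18\}$ and $\{15,16\}\mapsto\{19,20\}$ give the remaining exchanges.
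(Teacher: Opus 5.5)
Your strategy is the paper's: $\iota^*\omega_A=\det(\iota)\,\omega_{\iota^*A}$ with $\det(\iota)=-1$, then reduce $\iota^*A_i$ in $R$ for $i\le 6$, then use $\iota^*H_{(k_1,k_2)}=H_{(k_2,k_1)}$ for the remaining lines. Your treatment of $\omega_7,\dots,\omega_{20}$ and of the $V_j$ is fine. The complex-conjugation argument for $\omega_6$ is a valid shortcut, since $\iota^*$ is defined over $\ZZ$. However, the content of the proposition is exactly the signs on $\omega_2,\dots,\omega_5$, and as written you do not determine them. The one intermediate sign you commit to is wrong, and the overall sign is to be "fixed by consistency". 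Fixing it against the table would be circular.

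Here are the missing computations.
\begin{itemize}
\item $\omega_4,\omega_5$: one has $\iota^*(x_0^3x_1x_2)=x_0^3(-\theta^2x_2)(-\theta^{-2}x_1)=+x_0^3x_1x_2$, not $-x_0^3x_1x_2$. Also $\iota^*(ru_1\pm u_2)=u_2\pm ru_1$. So $\iota^*A_4=A_4$ and $\iota^*A_5=-A_5$ already in $S$, giving $-1$ on $\omega_4$ and $+1$ on $\omega_5$. With the sign you wrote you would get the opposite eigenvalues.
\item $\omega_3$: $\iota^*A_3=r^{-1}x_0^7u_2\equiv -x_0^7u_1$, because $x_0^7\ell_0\in J$.
\item $\omega_2$: $\iota^*A_2=x_0^7u_2-2x_0x_2^3u_2$. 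Use $x_0^7\ell_0\in J$ and $x_0x_2^3\ell_2\in J$. Use also $r^2x_0x_2^3u_1\equiv rx_0^7u_1+x_0x_1^3u_1$, which comes from $x_0u_1f\in J$ and $x_0z_3^2u_1=x_0z_3\cdot z_3\ell_3\in J$. Then $\iota^*A_2$ becomes $-rx_0^7u_1+2r^2x_0x_2^3u_1\equiv A_2$.
\item $\omega_6$ (for completeness): $\iota^*A_6=-r^{-3}x_0^3x_1x_2\,m_1u_2^2\equiv A_6$, by $\varphi(m_1u_2^2)=-r^3$ in \cref{lem:socle}. This confirms your expected coefficient $+1$.
\end{itemize}

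Finally, there is no sign ambiguity in $\psi$. The extended $\iota$ is a biholomorphism of $\PP(\cE)$ that covers $\iota$ on $\PW$ and preserves $Y$ and $X\times\PP^1$. Hence it commutes with $\pi^*$ and with the Gysin map $j_!$, and $\iota^*\omega_A=-\omega_{\iota^*A}$ holds exactly.
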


\begin{proof}
Since $\iota^*\Phi=\Phi$, we have $\iota^*\omega_A=\det(\iota)\,\omega_{\iota^*A}$, as in \eqref{eq:char}. The seven variables split into $x_0$, which $\iota$ fixes, and three pairs of variables which $\iota$ exchanges up to scalars with product $1$, so $\det(\iota)=(-1)^3=-1$. In $R$ one has $\iota^*A_1=A_1$, $\iota^*A_2=A_2$, $\iota^*A_3=-A_3$, $\iota^*A_4=A_4$, $\iota^*A_5=-A_5$ and $\iota^*A_6=A_6$, using the relations $x_0m_0\ell_0\in J$ and $x_0m_2\ell_2\in J$: for instance $\iota^*(x_0^7u_1)=r^{-1}x_0m_0u_2\equiv-x_0^7u_1$, and $\iota^*A_6=-r^{-3}x_0^3x_1x_2\,m_1u_2^2\equiv A_6$, because the class-$(3,1,1,0,0)$ part of $R_{(2,1)}$ is $x^a(A_\varnothing)_{(1,2)}$ and $\varphi(m_1u_2^2)=-r^3$ by \cref{lem:socle}. The permutation of the remaining lines follows from $\iota\tau_1\iota=\tau_2$, which gives $\iota^*H_{(k_1,k_2)}=H_{(k_2,k_1)}$.
\end{proof}

By \cref{prop:iota}, the involution $\iota^*$ preserves $V_1$ and acts on it with eigenvalues $\pm1$. Its two eigenspaces separate the classes $\omega_3,\omega_5$, which are of type $(1,1)$, from the part of $V_1$ containing $\operatorname{H}^{2,0}(X)$ and $\operatorname{H}^{0,2}(X)$. We fix notation for them, which will be used throughout the rest of the paper.

\begin{notation}
\label{not:V1}
Let $V_1^{\tr}$ and $V_1^{\alg}$ be the eigenspaces of $\iota^*$ on $V_1$ for the eigenvalues $-1$ and $+1$, so that $V_1^{\tr}\otimes\CC=\langle\omega_1,\omega_2,\omega_4,\omega_6\rangle$ has Hodge numbers $(1,2,1)$, and $V_1^{\alg}\otimes\CC=\langle\omega_3,\omega_5\rangle$ is of type $(1,1)$. 
\end{notation}

We now let $r$ vary and study the infinitesimal variation of the Hodge structure of the family $\{X_r\}$. The following lemma shows that the period map of this family is an immersion. This will be used in \cref{cor:rho} to show that, for very general $r$, the Hodge structure of $X_r$ has no extra algebraic classes.

\begin{lemma}
\label{lem:ivhs}
Let $\nabla$ be the Gauss--Manin connection of the family $\{X_r\}$. Then $\nabla_{d/dr}\,\omega_1\equiv\frac2r\,\omega_2$ modulo $F^2\operatorname{H}^2(X_r,\CC)$. In particular the period map $r\mapsto[\operatorname{H}^{2,0}(X_r)]$ is an immersion.
\end{lemma}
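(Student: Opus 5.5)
Differentiating under the residue is the plan. By \cref{thm:cayley}, $\operatorname{H}^{2,0}(X_r)$ is spanned by $\omega_1=\psi^{-1}\Res(x_0\Omega/\Phi^2)$. Only $\Phi$ depends on $r$, through $\partial\Phi/\partial r=u_1\,\partial f/\partial r=u_1(x_0^6-2rx_2^3)$, and $\Omega$ and the residue map are locally constant in the topologically trivial family $\{Y_r\}$. Griffiths' formula, in the toric form of \cite{BC94,VL24}, then gives
\[\nabla_{d/dr}\Res\Bigl(\frac{x_0\Omega}{\Phi^2}\Bigr)=-2\,\Res\Bigl(\frac{x_0u_1(x_0^6-2rx_2^3)\,\Omega}{\Phi^3}\Bigr).\]
The map $\psi=j_!\circ\pi^*$ is compatible with Gauss--Manin, being induced by a family of inclusions. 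Hence, modulo $F^2$, $\nabla_{d/dr}\omega_1$ is the class $\omega_P$ of $P=-2x_0u_1(m_0-2rm_2)\in S_{(1,1)}$, viewed in $R_{(1,1)}\simeq \operatorname{H}^{1,1}(X)_\prim$.

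The main step is to rewrite $P$ in $R$ in terms of the basis of \cref{tab:basis}. This is linear algebra in $x^aA_\varnothing$ with $a=(1,0,0,0,0)$, that is, in the quotient of bilinear forms by $\langle\lambda_i\otimes\ell_i\rangle$. I will use the relation $x_0m_2\ell_2\in J$, i.e. $r^2u_1m_2\equiv-u_2m_2$. Then I use $\sum\lambda_i\otimes\ell_i=0$ and $m_3\ell_3=u_1\lambda_3$, $m_4\ell_4=u_2\lambda_4$ to eliminate everything except $m_0u_1$ and $m_1u_1$. Concretely, the $\ell_i$-relations for $i=0,1,3,4$ express $m_0u_2$ and $m_1u_2$ through $u_1$-terms, and the $u_1$- and $u_2$-components of the relation involving $m_2$ then give $m_2u_1$ as a combination of $m_0u_1$ and $m_1u_1$.

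The target is $P\equiv\frac2r\bigl(rx_0^7u_1+2x_0x_1^3u_1\bigr)=\frac2r A_2$. A consistency check is available: $P$ must lie in the $\iota^*=-1$ part, since $\omega_1$ is $\iota^*$-anti-invariant for all $r$ by \cref{prop:iota}. The derivative therefore has to be a multiple of $\omega_2$ and not involve $\omega_3$, which already pins the answer down up to a scalar. The scalar can be fixed by pairing against $\omega_4$: by \cref{prop:cup} and \cref{lem:socle}, $\langle\omega_P,\omega_4\rangle=\kappa\varphi(PA_4)$, and comparing with $\kappa\varphi(A_2A_4)=2\kappa r^3(1-r)$ gives the coefficient. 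This route avoids solving the linear system by hand, and I would use it as the main computation.

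The hard part is making the Griffiths-transversality formula rigorous through the Cayley trick and $\psi$, and keeping the sign and factor conventions (the $-2$, the normalisation of $\Omega$) straight. Once $\nabla_{d/dr}\omega_1\equiv\frac2r\omega_2\neq0$ in $\operatorname{H}^2/F^2$, the differential of the period map $r\mapsto[\operatorname{H}^{2,0}]$ is $\omega_1\mapsto\frac2r\bar\omega_2\in\operatorname{Hom}(F^2,\operatorname{H}^2/F^2)$. This is nonzero since $\omega_2\neq0$ in $\operatorname{H}^{1,1}$ by \cref{thm:basis} and $r\neq0$, so the period map is an immersion.
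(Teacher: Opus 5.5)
Your proposal is correct. Its framework matches the paper's: differentiate $x_0\Omega/\Phi^2$ under the residue, and use that $\psi$ commutes with $\nabla$, so that $\nabla_{d/dr}\omega_1$ is the class of $P=-2x_0u_1(m_0-2rm_2)$ modulo $F^2$.

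You diverge in how you identify $P$ in $R_{(1,1)}$. The paper needs a single relation. Since $x_0u_1f\in J$ and $x_0u_1m_3=x_0z_3\cdot z_3u_1\in J$, one gets $r^2x_0m_2u_1\equiv rx_0m_0u_1+x_0m_1u_1$. Substituting this into $P$ gives $P\equiv\frac2rA_2$ at once. In your notation this is just $u_1\lambda_3=0$, coming from $m_3\ell_3$. So the relations $m_2\ell_2$ and $\sum\lambda_i\otimes\ell_i=0$ in your planned linear algebra are not needed.

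Your preferred route also works. You use $\iota$ to exclude $\omega_3$, then pair with $\omega_4$ via \cref{prop:cup} and \cref{lem:socle}. Explicitly, $\varphi(PA_4)=-2\bigl[\varphi(m_0u_1\ell_0)-2r\,\varphi(m_2u_1\ell_0)\bigr]=-2\bigl[0-2r\cdot r(1-r)\bigr]=4r^2(1-r)$. Dividing by $\varphi(A_2A_4)=2r^3(1-r)$ gives $\frac2r$.

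This route is heavier, however. It rests on Villaflor's cup product formula and on \cref{prop:iota}. It also needs $\iota^*$ to commute with $\nabla$, which requires a locally holomorphic choice of $\theta=r^{1/6}$; you should state this explicitly.

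The $\iota$ step can be dropped entirely. Because $\varphi(A_3A_4)=0$, pairing with $\omega_4$ already isolates the $\omega_2$-coefficient. And $\varphi(PA_5)=0$ together with $\varphi(A_3A_5)=2r^2(1+r)\neq0$ shows directly that the $\omega_3$-coefficient vanishes.

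In short, the paper's argument stays inside the Jacobian ring and needs nothing beyond \cref{thm:cayley}. Yours gives an independent check through the intersection form, at the cost of importing the cup product machinery.
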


\begin{proof}
Since the isomorphism $\psi=j_!\circ\pi^*$ of \cref{thm:cayley} is defined topologically, it commutes with $\nabla$, and by Griffiths transversality the image of $\nabla_{d/dr}\omega_1$ in $\operatorname{H}^{1,1}$ is the class of the $(1,1)$-part of the residue of $\frac{d}{dr}(x_0\Omega/\Phi^2)=-2x_0\,\partial_r\Phi\,\Omega/\Phi^3$ \cite{gr69,CG80}, that is, $-2\,\omega_{x_0\partial_r\Phi}$, where $\partial_r\Phi=u_1(m_0-2rm_2)$. Since $x_0u_1f\in J$ and $x_0u_1m_3=x_0z_3\cdot z_3u_1\in J$, we get $r^2x_0m_2u_1\equiv rx_0m_0u_1+x_0m_1u_1$ in $R$, hence $x_0\,\partial_r\Phi\equiv-\frac1r\bigl(rx_0^7u_1+2x_0x_1^3u_1\bigr)=-\frac1rA_2$, which is nonzero in $R_{(1,1)}$. 
\end{proof}

\begin{remark}\label{rem:kappa-relation}
It follows that the constants of \cref{prop:cup} satisfy
$\kappa'=-\frac23\kappa$.
Indeed, the same argument from \cref{lem:ivhs} allows us to conclude 
$\nabla_{d/dr}\omega_1\equiv(2/r)\omega_2\pmod{F^2}$ and
$\nabla_{d/dr}\omega_4
\equiv 6r^2(1-r)\omega_6
\pmod{F^1}.$ Differentiating $\langle\omega_1,\omega_4\rangle=0$,
flatness of the intersection form and Hodge types yield
\[
\begin{aligned}
0
&=\frac2r\langle\omega_2,\omega_4\rangle
  +6r^2(1-r)\langle\omega_1,\omega_6\rangle\\
&=r^2(1-r)(4\kappa+6\kappa'),
\end{aligned}
\]
where the last equality follows from \cref{prop:cup} and \cref{lem:socle}. Since $r\neq0,1$, the relation follows.
\end{remark}

We can now bound the Picard number of $X_r$. The rational pieces $V_1^{\alg},V_2,\dots,V_8$ consist of algebraic classes, so the transcendental part of the cohomology of $X_r$ is contained in $V_1^{\tr}$. Combined with \cref{lem:ivhs}, this determines the Picard number of $X_r$ for very general $r$, a notion that will be used throughout \cref{sec:hodge}.

\begin{corollary}
\label{cor:rho}
Let $T(X_r)$ be the transcendental lattice of $X_r$, the orthogonal complement of $\NS(X_r)$ in $\Lambda = \operatorname{H}^2(X,\ZZ)$, and let $V_1^{\tr}$ and $V_1^{\alg}$ be as in \cref{not:V1}. Then the following holds.
\begin{enumerate}[(1)]
\item $\NS(X_r)_\QQ\supseteq\QQ K\oplus V_1^{\alg}\oplus V_2\oplus\dots\oplus V_8$. Hence $\rho(X_r)\geq17$ and $T(X_r)_\QQ\subseteq V_1^{\tr}$.
\item $\tau_1$ makes $V_1^{\tr}$ a $\QQ(\zeta_6)$-vector space of dimension $2$ and $T(X_r)_\QQ$ a subspace. In particular, $\rho(X_r)\in\{17,19\}$.
\item For $r$ outside a countable subset of $\Gm\smallsetminus\{\pm1\}$ one has $\rho(X_r)=17$ and $\End_\Hdg(T(X_r)_\QQ)=\QQ(\zeta_6)$. We say that $r$, or $X_r$, is \emph{very general} if these two conditions hold.
\end{enumerate}
\end{corollary}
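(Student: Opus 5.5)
For (1), I will use the decomposition \eqref{eq:rational} and \cref{not:V1}. By \cref{thm:basis} and \cref{prop:dims}, each of $V_2,\dots,V_8$ is a rational subspace of $L\otimes\QQ$ whose complexification is spanned by classes of type $(1,1)$. The same holds for $V_1^{\alg}$: it is the $+1$-eigenspace of $\iota^*$ on $V_1$, hence defined over $\QQ$, and its complexification is $\langle\omega_3,\omega_5\rangle$. A rational class of type $(1,1)$ is algebraic by the Lefschetz $(1,1)$ theorem, and $K$ is algebraic. The dimensions are $1+2+7\cdot2=17$, so $\rho(X_r)\geq17$. Because $T(X_r)_\QQ$ is the orthogonal complement of $\NS(X_r)_\QQ$ and the decomposition \eqref{eq:rational} together with $\Lambda_\QQ=\QQ K\oplus L_\QQ$ is orthogonal, we get $T(X_r)_\QQ\subseteq V_1^{\tr}$.

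For (2), $\tau_1^*$ preserves $V_1$ and has order $6$. On $V_1\otimes\CC$ its eigenvalues are $\zeta_6^{5}$ on $H_{(5,5)}$ and $\zeta_6$ on $H_{(1,1)}$, both primitive sixth roots of unity. Hence $\tau_1^*$ satisfies $x^2-x+1=0$ on $V_1$, which makes $V_1$ a $\QQ(\zeta_6)$-vector space. Next, $\iota\tau_1\iota=\tau_2$, and on $V_1$ the elements $\tau_1^*$ and $\tau_2^*$ coincide, since the characters $(5,5)$ and $(1,1)$ have equal components. Therefore $\iota^*$ commutes with $\tau_1^*$ on $V_1$, so $V_1^{\tr}$ is $\tau_1$-stable and is a $\QQ(\zeta_6)$-space of $\QQ$-dimension $4$, that is, of dimension $2$. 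The transcendental lattice is preserved by every automorphism, so $T(X_r)_\QQ$ is a $\QQ(\zeta_6)$-subspace of $V_1^{\tr}$. It contains $\operatorname{H}^{2,0}\oplus\operatorname{H}^{0,2}$, so its $\QQ$-dimension is $2$ or $4$, which gives $\rho=21-\dim T\in\{17,19\}$.

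For (3), I will argue by countability. For each of the countably many rational $\QQ(\zeta_6)$-lines $\ell\subset V_1^{\tr}$, the condition $T(X_r)_\QQ=\ell$ means that $\omega_1(r)\in\ell\otimes\CC$. The local system $L$ is flat, so $\ell$ is flat along $r$, and this condition is analytic in $r$. By \cref{lem:ivhs} the period map is an immersion, so $\omega_1$ is not constant and the condition cuts out a discrete set of $r$ in the universal cover, as long as it is not identically satisfied. I expect the main obstacle to be ruling out that it holds identically. If it did, the period line $[\omega_1]$ would stay in the fixed plane $\ell\otimes\CC$ (inside the $\zeta_6^5$-eigenspace). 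Since $\ell\otimes\CC\cap H_{(5,5)}$ is one-dimensional, $[\omega_1]$ would then be constant, contradicting the immersion. Removing these countable sets gives $\rho=17$ and $T_\QQ=V_1^{\tr}$.

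It remains to compute $\End_\Hdg(T_\QQ)$. It is a field $E$ containing $\QQ(\zeta_6)$, acting on $T_\QQ$ of dimension $4$, so $[E:\QQ]\in\{2,4\}$. If $[E:\QQ]=4$, then $T_\QQ$ is one-dimensional over $E$. For each of the countably many possible such $E$-structures, the eigenline condition on $\omega_1$ again defines an analytic condition whose solution set is countable unless it holds identically. If it held identically, the period line would be one of finitely many eigenlines of $E\otimes\CC$ and hence constant, which again contradicts \cref{lem:ivhs}. Hence $E=\QQ(\zeta_6)$ outside a countable set of $r$.
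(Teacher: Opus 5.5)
Your proposal is correct and follows essentially the same route as the paper. Part (1) uses Lefschetz $(1,1)$, part (2) uses the $\QQ(\zeta_6)$-structure coming from $\tau_1$, and part (3) combines the countability of $\QQ(\zeta_6)$-lines and of quartic fields $E$ with the immersion of \cref{lem:ivhs}, pinning the period to a fixed eigenline in each exceptional case. Your check that $\iota^*$ commutes with $\tau_1^*$ on $V_1$ makes explicit a point the paper leaves implicit. Conversely, you use tacitly the flatness of $V_1^{\tr}$ in $r$, which the paper makes explicit by choosing $\theta=r^{1/6}$ holomorphically on the universal cover.
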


\begin{proof}
Put $T_r=T(X_r)_{\QQ}$ and $F=\QQ(\zeta_6)$.
By the Lefschetz $(1,1)$-theorem, the rational spaces
$V_1^{\alg},\allowbreak V_2,\allowbreak\ldots,\allowbreak V_8$ consist of algebraic classes.
Together with $\QQ K$, they form a $17$-dimensional subspace
whose orthogonal complement is $V_1^{\tr}$.
Thus $\rho(X_r)\geq17$ and $T_r\subseteq V_1^{\tr}$.
The minimal polynomial $t^2-t+1$ of $\tau_1$ makes
$V_1^{\tr}$ an $F$-vector space of dimension two.
Since $\tau_1$ preserves both $\operatorname{NS}(X_r)$ and the
intersection form, $T_r$ is an $F$-subspace. It is nonzero
because $T_r\otimes\CC$ contains $\operatorname{H}^{2,0}(X_r)$,
so $\dim_{\QQ}T_r\in\{2,4\}$. As $b_2(X_r)=21$,
this gives $\rho(X_r)\in\{17,19\}$. The Hodge structure $T_r$ is simple, since
$h^{2,0}(T_r)=1$ and it has no nonzero rational classes of
type $(1,1)$. Hence, by \cite{Zar83},
$E=\End_\Hdg(T_r)$ is a number field
containing $F$, and
\[
\dim_{\QQ}T_r=[E:\QQ]\dim_E T_r.
\]
We now work on the universal cover of the parameter space.
There we can choose $\theta=r^{1/6}$ holomorphically, so $\iota$
defines an automorphism of the pulled-back family.
The actions of $G$ and $\iota$ commute with parallel transport, and we may therefore identify $V_1^{\tr}$ with a fixed
rational vector space, with a fixed $F$-action. If $\rho(X_r)=19$, then $T_r$ is an $F$-line in
$V_1^{\tr}$. The line $\operatorname{H}^{2,0}(X_r)$ lies in
the $\zeta_6^{-1}$-eigenspace, since it has character
$(5,5)$, and hence equals that eigenspace. Thus this $F$-line determines the period. If instead $\rho(X_r)=17$ and $E\neq F$, then
$T_r=V_1^{\tr}$, $[E:\QQ]=4$, and $\dim_E T_r=1$.
Consequently,
\[
V_1^{\tr}\otimes_{\QQ}\CC
=\bigoplus_{\sigma:E\hookrightarrow\CC}V_\sigma,
\qquad
\dim_{\CC}V_\sigma=1,
\]
where $\sigma$ runs over the four embeddings of $E$ into $\CC$ and $V_\sigma$ is the line on which $E$ acts through $\sigma$. Since $E$ preserves $\operatorname{H}^{2,0}(X_r)$, this is one of these
four eigenlines. In either exceptional case, the period therefore belongs to
a fixed countable set of lines. Indeed, there are countably
many $F$-lines in $V_1^{\tr}$ and countably many quartic
subfields of $\operatorname{End}_{\QQ}(V_1^{\tr})$:
by the primitive element theorem, each such field is generated
by one rational endomorphism.
By \cref{lem:ivhs} the period map is an immersion, so the inverse
image of each fixed line is discrete and hence countable.
The union of these inverse images, and its image in the
original parameter space, is countable. Outside this set,
$\rho(X_r)=17$ and
$\operatorname{End}_{\mathrm{Hdg}}(T_r)=F$, as required.
\end{proof}

%----------------------------------------------------------------------
\section{Hodge isometries of $X_r$}
\label{sec:hodge}
%----------------------------------------------------------------------

In this section we compare the automorphism group of $X_r$ with the group $\Ort_\Hdg(L)$ of Hodge isometries of $L$, using the results of \cref{sec:griffiths}. To prove \cref{thm:B}, we construct many Hodge isometries of $L$ which are not induced by automorphisms, namely reflections in roots of the N\'eron--Severi lattice orthogonal to $K$. For this, we first study the sublattices of $L$ cut out by the rational decomposition \eqref{eq:rational}. We introduce them in the following notation, where we also recall some standard facts on discriminant forms of even lattices that will be used to decide when isometries of these sublattices extend to $L$.

\begin{notation}
\label{not:hodge}
We keep the notation of \cref{sec:griffiths}. In particular, $X=X_r$ and $L=K^\perp\subseteq\operatorname{H}^2(X,\ZZ)$, with intersection form $\langle\ ,\ \rangle$, and $\Aut(X_r)=G\rtimes\langle\iota\rangle$, where $G=\langle\tau_1,\tau_2\rangle$. We also keep the basis $\omega_1,\dots,\omega_{20}$ and the eigenspaces $H_k$ of \cref{thm:basis}, the rational subspaces $V_1,\dots,V_8$ of \eqref{eq:rational} and the subspaces $V_1^{\tr},V_1^{\alg}$ of \cref{not:V1}. We define the following primitive sublattices of $L$:
\[
T_0=L\cap V_1^{\tr},\qquad
N_0=T_0^\perp\cap L,\qquad
M_0=L\cap V_1^{\alg},\qquad
L_j=L\cap V_j\quad(2\leq j\leq8).
\]
For a nondegenerate even lattice $M$, we write $M^*$ for its dual, $\cA_M=M^*/M$ for its discriminant group, $q_M$ for its discriminant form and $\Ort(M)^\sharp$ for the kernel of $\Ort(M)\to\Ort(q_M)$. For a prime $p$, we say that $\cA_M$ is \emph{$p$-elementary} if $p\,\cA_M=0$. If $R$ and $R'$ are primitive sublattices of $M$ which are orthogonal complements of each other, the finite group $M/(R\oplus R')$ is called the \emph{glue group}. Finally, we write $A_2(m)$ for the lattice with Gram matrix $m\left(\begin{smallmatrix}2&-1\\-1&2\end{smallmatrix}\right)$, and $D_4=\{x\in\ZZ^4:\sum x_i\equiv0\pmod2\}$.
\end{notation}

Since $T_0\otimes\QQ=V_1^{\tr}$ has dimension four and Hodge numbers $(1,2,1)$, the lattice $T_0$ has rank $4$ and signature $(2,2)$ by the Hodge--Riemann relations. Thus $N_0$ has rank $16$ and is negative definite, while $M_0,L_2,\ldots,L_8$ are pairwise orthogonal even sublattices of $N_0$ of rank $2$. For very general $r$ in the sense of \cref{cor:rho}, these lattices satisfy $T_0=T(X_r)$ and $N_0=\NS(X_r)\cap K^\perp$.

We also recall the following standard facts on discriminant forms. With $R,R'\subseteq M$ as above, the orthogonal projections to $R^*$ and $R'^*$ induce injections of the glue group $M/(R\oplus R')$ into $\cA_R$ and $\cA_{R'}$. When $M$ is unimodular, both injections are isomorphisms, and the image of the glue group in $\cA_R\oplus\cA_{R'}$ is the graph of an isomorphism
\[
\gamma\colon\cA_R\longrightarrow\cA_{R'},
\qquad q_{R'}\circ\gamma=-q_R.
\]
Moreover, a pair of isometries $(g,g')\in\Ort(R)\times\Ort(R')$ extends to an isometry of $M$ if and only if $\gamma\circ\bar g=\bar g'\circ\gamma$, where $\bar g$ and $\bar g'$ denote the induced automorphisms of the discriminant groups \cite[Corollary~1.5.2]{Nik80}. Finally, $\cA_{A_2(m)}\simeq\ZZ/m\ZZ\times\ZZ/3m\ZZ$ and $D_4$ has discriminant $4$.

Before comparing $\Aut(X_r)$ with $\Ort_\Hdg(L)$, we check that the natural map between them is injective, and that it remains injective after passing to the quotient by $\pm1$. This is needed to compare the orders of these groups in \cref{thm:B}.

\begin{proposition}
\label{prop:faithful}
The action of $\Aut(X_r)$ on $L$ is faithful, and
$-\mathrm{id}_L$ is not induced by an automorphism
of $X_r$.
\end{proposition}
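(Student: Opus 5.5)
The plan is to read off the action of $\Aut(X_r)=G\rtimes\langle\iota\rangle$ on the eigenbasis of \cref{thm:basis}, using \eqref{eq:char} and \cref{prop:iota}. Since $L\otimes\CC=\operatorname{H}^2(X,\CC)_\prim$, an element acts trivially on $L$ exactly when it fixes every $\omega_i$. Likewise it acts as $-\mathrm{id}_L$ exactly when it multiplies every $\omega_i$ by $-1$.

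First take an element $g=\tau_1^{n_1}\tau_2^{n_2}\in G$. By \eqref{eq:char} it acts on $\omega_i$ by $\zeta_6^{n_1k_1(A_i)+n_2k_2(A_i)}$. The characters occurring are listed in \cref{tab:basis}. Among them are $(5,5)$, $(1,5)$ and $(5,1)$, from $\omega_1,\omega_7,\omega_8$.
\begin{itemize}
\item If $g$ acts trivially, then $5n_1+5n_2\equiv0$, $n_1+5n_2\equiv0$ and $5n_1+n_2\equiv0\pmod6$. Subtracting the first two gives $4n_1\equiv0$, so $n_1\in\{0,3\}$, and likewise $n_2\in\{0,3\}$. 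The first condition then forces $n_1=n_2$. The case $n_1=n_2=3$ is excluded by the character $(3,5)$ of $\omega_9$: there the exponent is $3\cdot3+3\cdot5=24\equiv0$, so this gives nothing, and I instead use $(2,5)$ from $\omega_{13}$, where $3\cdot2+3\cdot5=21\equiv3$, giving eigenvalue $-1\neq1$. Hence $g=1$.
\item If $g$ acts by $-1$, every exponent is $\equiv3$. From $(5,5)$ and $(1,5)$ we get $4n_1\equiv0$, so $n_1\in\{0,3\}$. Similarly $n_2\in\{0,3\}$, and $5(n_1+n_2)\equiv3$. This forces exactly one of $n_1,n_2$ to be $3$, say $(3,0)$. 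But the character $(2,5)$ then gives $3\cdot2=6\equiv0$, so the eigenvalue is $1$, a contradiction. The case $(0,3)$ is symmetric, using $(5,2)$.
\end{itemize}

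Next take an element $g\iota$ of the second kind. By \cref{prop:iota} it maps $\CC\omega_7$ onto $\CC\omega_8$, because $G$ preserves the lines and $\iota$ swaps them. Hence $g\iota$ cannot act by $\pm1$. Therefore no element of $\Aut(X_r)\smallsetminus G$ acts as $\pm\mathrm{id}_L$.

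Together these two steps show that the action is faithful and that $-\mathrm{id}_L$ is not induced by an automorphism. The only delicate point is making sure the eigenvalue computations use characters genuinely present in the table, and a short case check suffices for this.
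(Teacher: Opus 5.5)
Your proof is correct and is essentially the paper's argument: you compute the eigenvalues of $\tau_1^{n_1}\tau_2^{n_2}$ on the characters listed in \cref{tab:basis}, and you rule out the elements $g\iota$ because $\iota$ sends $\CC\omega_7$ to $\CC\omega_8$. The only difference is which characters you use. The paper uses $(1,1)$, $(1,2)$ and $(2,1)$, whereas you use $(5,5)$, $(1,5)$, $(5,1)$, $(2,5)$ and $(5,2)$; all of these appear in the table and your arithmetic checks out, though you should delete the abandoned attempt with $(3,5)$ in the final write-up.
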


\begin{proof}
We first consider an element $\tau_1^m\tau_2^n\in G$,
which acts on $H_k$ by $\zeta_6^{mk_1+nk_2}$.
If it acts trivially on $L$, the characters $(1,1)$
and $(1,2)$ in \cref{tab:basis} give $
m+n\equiv m+2n\equiv0\pmod6.
$ It follows that $m\equiv n\equiv0\pmod6$, so the
element is the identity. Suppose now that $\tau_1^m\tau_2^n$ acts as
$-\mathrm{id}_L$. Then $mk_1+nk_2\equiv3\pmod6$
for every character in \cref{tab:basis}.
Using again $(1,1)$ and $(1,2)$, we obtain
$n\equiv0$ and $m\equiv3\pmod6$.
But the character $(2,1)$ then gives
$2m+n\equiv0\pmod6$, a contradiction. It remains to consider the elements $g\iota$ with
$g\in G$. By \cref{prop:iota}, the involution $\iota$
maps $\CC\omega_7$ onto $\CC\omega_8$, while $g$
preserves every eigenline. Thus $g\iota$ also maps
$\CC\omega_7$ onto $\CC\omega_8\neq\CC\omega_7$,
so it acts neither trivially nor as $-\mathrm{id}_L$.
\end{proof}

To prove \cref{thm:B}, we identify root lattices in $N_0$ and use their root reflections to construct Hodge isometries of $L$. To this end, we rely on the following three lemmas. They are probably standard, but we include their proofs for completeness. The first one is used systematically in the study of automorphisms of K3 surfaces and of hyper-K\"ahler manifolds (see for instance \cite{BC23}), and the other two are elementary instances of the correspondence between lattices with a fixed-point-free isometry of order three and Hermitian lattices over the Eisenstein integers (see for instance \cite[Chapter~2, \S2.6]{CS99}).

\begin{lemma}
\label{lem:pelem}
Let $M$ be a unimodular lattice and $\sigma\in\Ort(M)$ of prime order $p$. Then $\cA_{M^\sigma}$ is $p$-elementary, that is, $p\,\cA_{M^\sigma}=0$.
\end{lemma}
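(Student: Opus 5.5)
Write $R=M^\sigma$ and $R'=R^\perp\cap M$; both are primitive sublattices of $M$, and each is the orthogonal complement of the other. Since $M$ is unimodular, the standard facts recalled after \cref{not:hodge} apply. The glue group $M/(R\oplus R')$ maps isomorphically onto $\cA_R$ by the orthogonal projection to $R^*$. It therefore suffices to show that $p$ kills this glue group, that is, $p\,x\in R\oplus R'$ for every $x\in M$.

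The key construction is the norm element. Put $N=1+\sigma+\dots+\sigma^{p-1}\in\End(M)$. For $x\in M$ we have $\sigma(Nx)=Nx$, so $Nx\in R$. Also $(1-\sigma)N=0$ and $\sigma^p=1$. The element $x'=px-Nx=\sum_{i=0}^{p-1}(x-\sigma^ix)$ is orthogonal to $R$: for $y\in R$, $\langle x-\sigma^ix,y\rangle=\langle x,y\rangle-\langle x,\sigma^{-i}y\rangle=0$ since $\sigma^{-i}y=y$. Hence $x'\in R'$, and $px=Nx+x'\in R\oplus R'$. So $p$ annihilates the glue group, hence $\cA_R$.

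The only point needing care is the identification of the glue group with the whole of $\cA_R$. This rests on unimodularity via the surjectivity of $M\to R^*$: every functional on $R$ extends to $M$ because $R$ is primitive and $M=M^*$. This is exactly the recalled fact, so I expect no real obstacle.

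If $R=0$ or $R'=0$ the statement is trivial. In that case the argument still goes through formally, since $\cA_R$ is then trivial, either because $R=0$ or because $R=M$ is unimodular.
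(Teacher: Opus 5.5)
Your proof is correct and is essentially the paper's argument. The paper also uses unimodularity to get surjectivity of the orthogonal projection $M\to(M^\sigma)^*$, and notes that this projection is $z\mapsto\frac1p\sum_{i=0}^{p-1}\sigma^iz$, so $p$ times it lands in $M^\sigma$; this is exactly your decomposition $px=Nx+(px-Nx)$, phrased through the glue group instead of directly through $(M^\sigma)^*$.
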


\begin{proof}
Let $y\in(M^\sigma)^*$. Since $M$ is unimodular and $M^\sigma$ is primitive, the orthogonal projection $M\to(M^\sigma)^*$ is surjective, so $y$ is the projection of some $z\in M$, that is, $y=\frac1p\sum_{i=0}^{p-1}\sigma^iz$. Hence $py\in M^\sigma$.
\end{proof}

\begin{lemma}
\label{lem:hex}
Let $M$ be an even lattice of rank $2$ with an isometry of order $3$ without nonzero fixed vectors. Then $M\simeq A_2(m)$ for a unique $m\in\ZZ\smallsetminus\{0\}$.
\end{lemma}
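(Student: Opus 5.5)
Let $\sigma$ be the isometry of order $3$. Since $\sigma$ has no nonzero fixed vector and $\operatorname{rank}M=2$, its characteristic polynomial is $t^2+t+1$, so $1+\sigma+\sigma^2=0$ on $M$. The plan is to pick any nonzero $x\in M$ and compute the Gram matrix of the pair $x,\sigma x$. Put $\langle x,x\rangle=2m$; this is an even nonzero integer, since $M$ is even and nondegenerate (for rank two with an order-three isometry, $\langle x,x\rangle=0$ would force the form to vanish, see below). Invariance gives $\langle\sigma x,\sigma x\rangle=2m$. Pairing $x$ with $x+\sigma x+\sigma^2x=0$ and using $\langle x,\sigma^2x\rangle=\langle\sigma x,x\rangle$ gives
\[2m+2\langle x,\sigma x\rangle=0,\]
so $\langle x,\sigma x\rangle=-m$. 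Hence $x,\sigma x$ span a sublattice with Gram matrix $m\left(\begin{smallmatrix}2&-1\\-1&2\end{smallmatrix}\right)$, of determinant $3m^2\neq0$. In particular $x,\sigma x$ are independent, and the same computation shows the form is determined by $\langle x,x\rangle$ on $\ZZ x+\ZZ\sigma x$, so degeneracy is excluded once $m\neq0$.

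The key step is to choose $x$ so that $\ZZ x+\ZZ\sigma x=M$. I would view $M$ as a module over $\ZZ[\sigma]/(\sigma^2+\sigma+1)\simeq\ZZ[\zeta_3]$, the Eisenstein integers. This module is torsion free of rank one, hence isomorphic to an ideal. Since $\ZZ[\zeta_3]$ is a principal ideal domain, $M$ is free of rank one over it, generated by some $x$. Then $M=\ZZ[\zeta_3]x=\ZZ x\oplus\ZZ\sigma x$, and by the computation above $M\simeq A_2(m)$. This is the only non-formal input and the main point to justify. Alternatively, one can argue directly: take $x$ nonzero of minimal $|\langle x,x\rangle|$ if the form is definite, and check that the index of $\ZZ x+\ZZ\sigma x$ is one. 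The PID argument is cleaner and also covers the indefinite case, though for rank two with $\sigma$ acting this way the form is automatically definite.

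For uniqueness, note that $\operatorname{disc}A_2(m)=3m^2$ determines $|m|$. The sign of $m$ is determined by whether the lattice is positive or negative definite, since $A_2(m)$ is definite of the sign of $m$. So $m$ is unique. This proves \cref{lem:hex}.
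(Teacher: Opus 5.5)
Your proposal is correct and follows the paper's proof: you regard $M$ as a torsion-free rank-one module over the PID $\ZZ[\zeta_3]$, take a generator $x$, and get $\langle x,x\rangle=\langle\sigma x,\sigma x\rangle=2m$ and $\langle x,\sigma x\rangle=-m$ from $1+\sigma+\sigma^2=0$. Your uniqueness argument, via $\disc A_2(m)=3m^2$ and the sign of definiteness, fills in a point the paper leaves implicit; one small fix is that the independence of $x,\sigma x$ should come from freeness (or from irreducibility of $t^2+t+1$ over $\QQ$), not from the determinant $3m^2$, since that already presupposes $m\neq0$.
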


\begin{proof}
The isometry allows us to regard $M$ as a torsion free module of rank $1$ over $\ZZ[\zeta_3]$. Since this ring is a principal ideal domain, the module is free, so we may identify $M$ with $\ZZ[\zeta_3]$. Let $b$ denote the bilinear form of $M$ and write $b(1,1)=2m$, with $m\in\ZZ$ since $M$ is even. Multiplication by $\zeta_3$ preserves $b$, so $b(\zeta_3,\zeta_3)=2m$. The relation $1+\zeta_3+\zeta_3^2=0$ then gives $b(1,\zeta_3)=-m$. Thus the basis $1,\zeta_3$ has the Gram matrix of $A_2(m)$, as required.
\end{proof}

\begin{lemma}
\label{lem:eis}
Let $M$ be an even lattice of rank $4$ with an isometry $\sigma$ of order $3$ without nonzero fixed vectors. Then $\disc M$ is a perfect square. If moreover $M$ is negative definite and $\cA_M$ is $2$-elementary, then $M\simeq D_4(-1)$.
\end{lemma}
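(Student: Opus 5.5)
Since $\sigma$ has order $3$ and no nonzero fixed vectors, its minimal polynomial is $t^2+t+1$, so $M$ is a torsion-free $\ZZ[\zeta_3]$-module of rank $2$. As in \cref{lem:hex}, $\ZZ[\zeta_3]$ is a principal ideal domain, so $M\simeq\ZZ[\zeta_3]^2$ is free. The plan is to attach to the bilinear form $b$ of $M$ a Hermitian form $H$ over $\ZZ[\zeta_3]$ and to compute $\disc M$ from $\det H$. The standard choice is
\[H(x,y)=\tfrac{1}{3}\bigl(2b(x,y)+b(x,\sigma y)\zeta_3^2+\dots\bigr),\]
or, more cleanly, $H(x,y)=\sum_{k=0}^{2}b(x,\sigma^ky)\zeta_3^{k}$, suitably normalized, so that $b=\tfrac{c}{3}\Tr_{\QQ(\zeta_3)/\QQ}H$ for an explicit constant $c$.

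For a rank-one free module this is exactly the computation of \cref{lem:hex}. There $b(1,1)=2m$ and $\disc A_2(m)=3m^2$. So I expect the general formula
\[\disc M=\Nm(\text{scaling})\cdot(\det H)^2\cdot 3^{\,n}\]
with $n=2$, which makes $3^2$ a square. Concretely, take a $\ZZ[\zeta_3]$-basis $e_1,e_2$ of $M$ and the $\ZZ$-basis $e_1,\zeta e_1,e_2,\zeta e_2$. The Gram matrix is then a $2\times2$ block matrix whose blocks have the form $\begin{pmatrix}2p&q\\ q'&2p'\end{pmatrix}$, determined by the Hermitian matrix $(h_{ij})$ with $h_{ii}\in\ZZ$ and $h_{12}\in\ZZ[\zeta_3]$ (up to the factor $\sqrt{-3}$ coming from the different). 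Its determinant equals $\Nm_{\QQ(\zeta_3)/\QQ}$ of a $2\times2$ determinant over $\QQ(\sqrt{-3})$. Since the Hermitian determinant $\det(h_{ij})$ is rational, this norm is $\det(h)^2$ times a square power of $3$. I will verify this by direct expansion with $h_{11}=2a$, $h_{22}=2d$ and general entries $b(e_1,e_2)=x$, $b(e_1,\zeta e_2)=y$. The expected result is $\disc M=\bigl(3ad-(x^2+xy+y^2)\bigr)^2$ up to sign conventions, which is a square. The main risk in the whole proof is getting this determinant identity right, so I will double-check it by specializing to $A_2(m)\oplus A_2(m')$ (where $x=y=0$), which should give $9m^2m'^2$.

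For the second assertion, $M$ negative definite gives $\disc M=D^2>0$. Because $\cA_M$ is $2$-elementary, $\cA_M\simeq(\ZZ/2)^k$, so $D^2=2^k$ with $k\le 4$ and $k$ even. Hence $\disc M\in\{1,4,16\}$. The value $1$ is impossible, since there is no even unimodular negative definite lattice of rank $4$ (the signature would have to be divisible by $8$).

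If $\disc M=16$ with $\cA_M=(\ZZ/2)^4$, then $M=N(2)$ for an integral lattice $N$: indeed $2M^*\subseteq M$ and $[M^*:M]=2^4=2^{\operatorname{rank}}$ force $M^*=\tfrac12 M$. Then $N$ is unimodular and negative definite of rank $4$, hence $N\simeq\langle-1\rangle^4$, and $\sigma$ induces an order-$3$ isometry of $\langle-1\rangle^4$ without fixed vectors. Isometries of $\ZZ^4$ are signed permutations, and one of order $3$ has a $3$-cycle plus a fixed coordinate, which gives a fixed vector. So this case is excluded.

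That leaves $\disc M=4$ with $\cA_M=(\ZZ/2)^2$. I conclude $M\simeq D_4(-1)$ from the classification of even definite lattices of small discriminant: the genus of an even negative definite rank-$4$ lattice of discriminant $4$ with $2$-elementary discriminant group contains only $D_4(-1)$ (the alternative $A_1^2\oplus A_1^2$ has discriminant $16$). Alternatively, I can argue via the Eisenstein structure: the Hermitian lattice then has $\det h$ of minimal size, so by reduction theory of binary Hermitian forms over $\ZZ[\zeta_3]$ it is unique, and $D_4(-1)$ (the Eisenstein lattice $\ZZ[\zeta_3]^2$ with form scaled so that roots have norm $-2$) realizes it.
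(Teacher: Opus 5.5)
Your treatment of the first assertion follows the paper's route. With $b=\Tr\circ H$ one has $\disc M=(3\det H)^2$, and your predicted value $\bigl(3ad-(x^2+xy+y^2)\bigr)^2$ is exactly the paper's $(3\alpha\gamma-\Nm(\beta))^2$. Your reduction to $\disc M=4$ is also correct, by a different route from the paper's. You exclude $\disc M=1$ because even unimodular definite lattices have rank divisible by $8$. You exclude $\disc M=16$ because then $M=N(2)$ with $N\simeq\langle-1\rangle^{\oplus4}$, and an order-$3$ isometry of $N$ permutes three coordinate lines and fixes the fourth coordinate vector. The paper disposes of both cases at once: $\Nm(\beta)\not\equiv2\pmod 3$ gives $3\alpha\gamma-\Nm(\beta)\not\equiv1\pmod3$.

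The gap is the final identification, which is the actual content of the second assertion. From $\disc M=4$ and $\cA_M\simeq(\ZZ/2\ZZ)^2$ you simply assert $M\simeq D_4(-1)$. Remarking that $A_1(-1)^{\oplus4}$ has discriminant $16$ proves nothing, since nothing says $M$ is a root lattice. A genus argument would need two facts:
\begin{enumerate}[(i)]
\item the discriminant form is forced, since the form of $D_4$ is the only nondegenerate quadratic form on $(\ZZ/2\ZZ)^2$ of signature $4 \bmod 8$, so all such $M$ lie in one genus;
\item this genus has class number one.
\end{enumerate}
Neither is justified in your proposal, and (ii) is precisely the uniqueness you are trying to prove.

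The Hermitian alternative is only gestured at, and $D_4(-1)$ is misdescribed there. $\ZZ[\zeta_3]^2$ with a rescaled standard form is $A_2(c)^{\oplus2}$, of discriminant $9c^4\neq4$. By contrast, $D_4(-1)$ has a non-diagonal Hermitian matrix with $3\det H=2$.

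The paper closes this step by a direct construction:
\begin{enumerate}[(1)]
\item Hermite's bound $\min M(-1)\le\gamma_4(\disc M)^{1/4}=2$ gives a root $e_1$.
\item The vectors $e_1,\sigma e_1$ span a primitive $R\simeq A_2(-1)$.
\item Its orthogonal complement $N$ is $\sigma$-stable, hence $N\simeq A_2(-m)$ by \cref{lem:hex}.
\item The index $k=[M:R\oplus N]$ divides $3$, and $9m^2=4k^2$ forces $k=3$ and $m=2$.
\item The glue vector $h=\tfrac13(2e_1+e_2+f_1+2f_2)$ yields the basis $e_1,e_2,-h,f_2-h$, whose Gram matrix is that of $D_4(-1)$.
\end{enumerate}
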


\begin{proof}
Let $b$ denote the bilinear form of $M$. Put $F=\QQ(\zeta_3)$ and $\mathcal O=\ZZ[\zeta_3]$, and write $\Tr$ and $\Nm$ for the trace and norm of $F/\QQ$. As in the preceding lemma, $M$ is a free $\mathcal O$-module of rank two. To compute its discriminant, consider the Hermitian form $H(x,y)=\frac13\sum_{k=0}^2\bar\zeta_3^{\,k}b(x,\zeta_3^k y)$, which satisfies $b=\Tr\circ H$. Since $\Tr(\lambda H(x,y))=b(x,\lambda y)\in\ZZ$ for every $\lambda\in\mathcal O$, the values of $H$ lie in $\frac1{\sqrt{-3}}\mathcal O$. Moreover, $H(x,x)=\frac12b(x,x)\in\ZZ$, because $M$ is even. Thus, in an $\mathcal O$-basis, its matrix has the form
\[
H=
\begin{pmatrix}
\alpha & \beta/\sqrt{-3}\\
-\bar\beta/\sqrt{-3} & \gamma
\end{pmatrix},
\qquad
\alpha,\gamma\in\ZZ,\quad \beta\in\mathcal O,
\]
and $3\det H=3\alpha\gamma-\Nm(\beta)\in\ZZ$. The trace pairing $(x,y)\mapsto\Tr(x\bar y)$ on $\mathcal O$ has determinant $3$. Passing to the associated $\ZZ$-basis and taking the trace of $H$, we obtain $\disc M=3^2\Nm(\det H)=(3\det H)^2$, which proves the first assertion. Assume now that $M$ is negative definite and $\cA_M$ is $2$-elementary. We first show that $\disc M=4$. Since $\cA_M$ has at most four generators, $\disc M=2^t$ with $0\leq t\leq4$, so the first assertion gives $\disc M\in\{1,4,16\}$. As $H$ is negative definite, $d:=3\det H>0$, and hence $d\in\{1,2,4\}$. Writing $\beta=p+q\zeta_3$, we have $\Nm(\beta)=p^2-pq+q^2\equiv(p+q)^2\not\equiv2\pmod3$. Together with $d=3\alpha\gamma-\Nm(\beta)$, this excludes $d=1$ and $d=4$. Thus $d=2$ and $\disc M=4$. To identify $M$, we first construct a sublattice isometric to $A_2(-1)$. The minimum of the positive definite lattice $M(-1)$ is at most $\gamma_4(\disc M)^{1/4}=2$, where $\gamma_4=\sqrt2$ is the Hermite constant \cite[Chapter~1]{CS99}. Since $M$ is even, there is $e_1\in M$ with $b(e_1,e_1)=-2$. Put $e_2=\sigma e_1$. The relation $1+\sigma+\sigma^2=0$ gives $b(e_1,e_2)=1$, so $R=\ZZ e_1+\ZZ e_2\simeq A_2(-1)$. Its discriminant is squarefree, hence $R$ is primitive. The orthogonal complement $N=R^\perp\cap M$ is $\sigma$-stable, so \cref{lem:hex} gives $N\simeq A_2(-m)$ for some $m\geq1$. Write $k=[M:R\oplus N]$. The injection of the glue group into $\cA_R$ gives $k\mid3$, while $9m^2=\disc(R\oplus N)=k^2\disc M=4k^2$. It follows that $k=3$ and $m=2$. It remains to describe how $R$ and $N$ are glued inside $M$. Choose a standard basis $f_1,f_2$ of $N\simeq A_2(-2)$. The classes of $u=\frac{2e_1+e_2}{3}$ and $v=\frac{f_1+2f_2}{3}$ generate $\cA_R$ and the $3$-torsion subgroup $\cA_N[3]$, respectively, both cyclic of order three. The glue group projects isomorphically onto each of these groups, so it is generated by $(\bar u,\bar v)$ or $(\bar u,-\bar v)$. After replacing $f_1,f_2$ by their negatives if necessary, we may therefore write
\[
M=(R\oplus N)+\ZZ h,
\qquad
h=u+v=\frac{2e_1+e_2+f_1+2f_2}{3}.
\]
A direct computation shows that $e_1,e_2,-h,f_2-h$ have Gram matrix
\[
\begin{pmatrix}
-2&1&1&1\\
1&-2&0&0\\
1&0&-2&0\\
1&0&0&-2
\end{pmatrix}.
\]
This is the Gram matrix of $D_4(-1)$, and its determinant is $4=\disc M$. Hence these vectors form a basis of $M$, giving the required isometry.
\end{proof}

We now apply these lemmas to the action of $G$ on $L$. The invariant lattices of the elements $\tau_1^2,\tau_2^2$ of order three and $\tau_1^3,\tau_2^3$ of order two are root lattices contained in $N_0$, and the reflections in their roots will give the Hodge isometries of \cref{thm:B}.

\begin{lemma}
\label{prop:roots}
The lattices $L_3$ and $L_4$ are isometric to $A_2(-1)$, while the invariant lattices $L^{\tau_1^3}=L\cap(V_5\oplus V_6)$ and $L^{\tau_2^3}=L\cap(V_7\oplus V_8)$ are isometric to $D_4(-1)$. In particular, $N_0$ contains the pairwise orthogonal root sublattices $L_3$, $L_4$, $L^{\tau_1^3}$ and $L^{\tau_2^3}$, of type $A_2\sqcup A_2\sqcup D_4\sqcup D_4$.
\end{lemma}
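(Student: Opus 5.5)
The plan is to realize each of the four lattices as the invariant lattice of an element of $G$ of prime order. \cref{lem:pelem} then controls the discriminant group, and \cref{lem:hex} or \cref{lem:eis} pins down the isometry class. Throughout, $\tau_1^m\tau_2^n$ acts on $H_k$ by $\zeta_6^{mk_1+nk_2}$, and the characters are read off from \cref{tab:basis}. Since $V_j\otimes\CC=\langle\omega_{2j+3},\omega_{2j+4}\rangle$, the pieces and their characters are:
\begin{itemize}
\item $V_3$: $(3,5),(3,1)$;
\item $V_4$: $(5,3),(1,3)$;
\item $V_5$: $(2,5),(4,1)$;
\item $V_6$: $(4,5),(2,1)$;
\item $V_7$: $(5,2),(1,4)$;
\item $V_8$: $(1,2),(5,4)$.
\end{itemize}

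\emph{The lattices $L_3$ and $L_4$.} The element $\tau_1^2$ has order three and acts on $H_k$ by $\zeta_3^{k_1}$. Its fixed characters are those with $3\mid k_1$. Since $(3,3)$ does not occur, these are exactly $(3,5)$ and $(3,1)$, so $L^{\tau_1^2}=L\cap V_3=L_3$. As an invariant lattice it is primitive, and it is even because $L$ is.

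Next, $\tau_2^2$ acts on $V_3\otimes\CC$ by $\zeta_3^{k_2}$ with $k_2\in\{5,1\}$, hence without nonzero fixed vectors. By \cref{lem:hex}, $L_3\simeq A_2(m)$ for some $m$. Applying \cref{lem:pelem} to the unimodular lattice $L$ and $\sigma=\tau_1^2$, $p=3$, shows that $\cA_{L_3}\simeq\ZZ/m\ZZ\times\ZZ/3m\ZZ$ is $3$-elementary, which forces $|m|=1$. Finally, $V_3$ is of type $(1,1)$ and orthogonal to $K$, so $L_3$ is negative definite by the Hodge--Riemann relations, giving $m=-1$.

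The same argument with $\tau_1,\tau_2$ exchanged gives $L_4=L^{\tau_2^2}\simeq A_2(-1)$. Alternatively, one can use that $\iota$ exchanges $V_3$ and $V_4$ (\cref{prop:iota}).

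\emph{The lattices $L^{\tau_1^3}$ and $L^{\tau_2^3}$.} The involution $\tau_1^3$ acts on $H_k$ by $(-1)^{k_1}$. Its fixed characters are those with $k_1$ even, namely $(2,5),(4,1),(4,5),(2,1)$, so $L^{\tau_1^3}=L\cap(V_5\oplus V_6)$. This lattice has rank $4$, is even and primitive, and is negative definite because $V_5\oplus V_6$ is of type $(1,1)$ and lies in $K^\perp$. By \cref{lem:pelem} with $p=2$, its discriminant group is $2$-elementary.

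The element $\tau_1^2$ preserves $V_5\oplus V_6$ and acts there by $\zeta_3^{k_1}$ with $k_1\in\{2,4\}$. This is an isometry of order three without nonzero fixed vectors, so \cref{lem:eis} gives $L^{\tau_1^3}\simeq D_4(-1)$. Symmetrically, $L^{\tau_2^3}=L\cap(V_7\oplus V_8)\simeq D_4(-1)$, using $\tau_2^2$.

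\emph{Containment in $N_0$.} The four lattices lie in $V_3$, $V_4$, $V_5\oplus V_6$ and $V_7\oplus V_8$, which are mutually orthogonal in the decomposition \eqref{eq:rational}. They are also orthogonal to $V_1\supseteq V_1^{\tr}$, hence contained in $N_0=T_0^\perp\cap L$. Their roots are vectors of square $-2$, so they form root systems of type $A_2\sqcup A_2\sqcup D_4\sqcup D_4$.

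The only step needing care is the bookkeeping: matching invariant subspaces to the correct $V_j$ and checking that the chosen order-three element is fixed-point free on each piece. The substantive lattice-theoretic input is already contained in \cref{lem:pelem,lem:hex,lem:eis}.
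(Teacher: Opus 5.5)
Your proof is correct and follows the paper's argument essentially step for step. $L_3$ and $L_4$ are the invariant lattices of $\tau_1^2$ and $\tau_2^2$, so \cref{lem:pelem} makes their discriminant groups $3$-elementary, and \cref{lem:hex} (applied with the other order-three element) then forces $A_2(-1)$; the $D_4(-1)$ lattices come from \cref{lem:pelem} for the involutions $\tau_i^3$ together with \cref{lem:eis}. The only cosmetic difference is that you use $\tau_1^2$ on $V_5\oplus V_6$ where the paper uses $\tau_1$, and on that lattice $\tau_1$ is just the inverse of $\tau_1^2$.
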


\begin{proof}
We first consider the isometry $\tau_1^2$ of order $3$ on $L$. By \cref{tab:basis}, the characters fixed by $\tau_1^2$ are precisely those with $k_1=3$, which occur only in $V_3$. Thus its invariant lattice is $L\cap V_3=L_3$. Since $L$ is unimodular, \cref{lem:pelem} shows that $\cA_{L_3}$ is $3$-elementary. On the other hand, $\tau_2^2$ acts on $L_3$ as an isometry of order $3$ without nonzero fixed vectors. As $L_3$ is even and negative definite, \cref{lem:hex} gives $L_3\simeq A_2(-m_3)$ for some $m_3\geq1$. Its discriminant group is $\ZZ/m\ZZ_3\times\ZZ/3m\ZZ_3$, which is $3$-elementary only for $m_3=1$. Hence $L_3\simeq A_2(-1)$. The same argument, exchanging $\tau_1$ and $\tau_2$, gives $L_4\simeq A_2(-1)$.

We now consider the involution $\tau_1^3$, which acts on $H_k$ by $(-1)^{k_1}$. Its invariant lattice is $L\cap(V_5\oplus V_6)$, an even negative definite lattice of rank $4$. By \cref{lem:pelem}, its discriminant group is $2$-elementary. Moreover, $\tau_1$ restricts to an isometry of order $3$ without nonzero fixed vectors, since $k_1\in\{2,4\}$ on $V_5\oplus V_6$. We can therefore apply \cref{lem:eis} to obtain $L^{\tau_1^3}\simeq D_4(-1)$. The same argument for $\tau_2^3$ gives $L^{\tau_2^3}\simeq D_4(-1)$.

Finally, the four lattices lie respectively in $V_3$, $V_4$, $V_5\oplus V_6$ and $V_7\oplus V_8$, which are pairwise orthogonal. These subspaces are also orthogonal to $V_1$, and hence to $T_0\subseteq V_1$. Thus the four lattices are contained in $N_0$, proving the last assertion.
\end{proof}

We can now prove \cref{thm:B}. The reflections in the roots of the four lattices of \cref{prop:roots} generate a large group $W$ of Hodge isometries of $L$, and we compare it with the image of $\Aut(X_r)$ in $\Ort_\Hdg(L)$.

\begin{proof}[Proof of \cref{thm:B}]
Let $v\in N_0$ be a root, so $v^2=-2$. The reflection $
s_v(x)=x+\langle x,v\rangle v $ is an integral isometry of $L$ fixing $T_0$ pointwise. Since $v$ is orthogonal to
$T_0\otimes\CC\supseteq\operatorname{H}^{2,0}(X_r)$,
the reflection $s_v$ fixes $\operatorname{H}^{2,0}(X_r)$
pointwise. Being defined over $\QQ$, it also fixes
$\operatorname{H}^{0,2}(X_r)
=\overline{\operatorname{H}^{2,0}(X_r)}$ pointwise.
As an isometry, it therefore preserves
$
\operatorname{H}^{1,1}(X_r)_{\mathrm{prim}}
=
\bigl(\operatorname{H}^{2,0}(X_r)
\oplus\operatorname{H}^{0,2}(X_r)\bigr)^\perp. $ Thus $s_v\in\Ort_\Hdg(L)$. Let $W\subseteq\Ort_\Hdg(L)$ be the group generated by the reflections in the roots of the four lattices of \cref{prop:roots}. Since these lattices are pairwise orthogonal, $W$ acts on their sum as the product of their Weyl groups and trivially on its orthogonal complement, so
\[
W\simeq W(A_2)^2\times W(D_4)^2,
\qquad
|W|=6^2\cdot192^2=1\,327\,104.
\]
Moreover, $W$ acts trivially on their orthogonal complement
in $L\otimes\QQ$, namely $V_1\oplus V_2$. The four lattices consist of classes of type $(1,1)$, so they lie in $\NS(X_r)$ by the Lefschetz $(1,1)$ theorem, and they are orthogonal to $K$. This proves the first assertion. By \cref{prop:faithful}, we identify
$A=\Aut(X_r)=G\rtimes\langle\iota\rangle$
with its image in $\Ort_\Hdg(L)$. Recall that $|A|=72$.
The group $G$ preserves each of the four root lattices,
while $\iota$ exchanges $L_3$ with $L_4$ and
$L^{\tau_1^3}$ with $L^{\tau_2^3}$.
Since conjugation carries $s_v$ to $s_{a(v)}$ for $a\in A$,
the group $A$ normalizes $W$. We claim that $
A\cap W=\{1,\tau_1^3\tau_2^3\}.$ Indeed,
an element of the form $g\iota$, with $g\in G$, exchanges
$L_3$ and $L_4$, whereas every element of $W$ preserves
each of them. Thus $A\cap W\subseteq G$.
If $\tau_1^m\tau_2^n\in W$, its action on $V_1\oplus V_2$
is trivial. The characters of these two spaces give
\[
m+n\equiv m-n\equiv0\pmod6,
\]
hence $(m,n)=(0,0)$ or $(3,3)$ modulo $6$.
Conversely, \cref{tab:basis} shows that $\tau_1^3\tau_2^3$ acts
as $-\mathrm{id}$ on both $D_4(-1)$ lattices and as the
identity on their orthogonal complement.
Since $-\mathrm{id}\in W(D_4)$, it belongs to $W$. Therefore $WA$ is a subgroup of $\Ort_\Hdg(L)$ of order
\[
|WA|=\frac{|W|\,|A|}{|W\cap A|}
=36\,|W|=47\,775\,744.
\]
In particular,
\[
[\Ort_\Hdg(L):A]\geq[WA:A]
=\frac{|W|}{2}=663\,552.
\]
Finally, $-\tau_1^3$ acts as $-\mathrm{id}$ on
$L^{\tau_1^3}$ and as the identity on its orthogonal
complement, so $-\tau_1^3\in W$ and hence $-\mathrm{id}_L\in WA$.
The image of $WA$ in $\Ort_\Hdg(L)/\{\pm1\}$ therefore
has order $18\,|W|$. By \cref{prop:faithful}, the image
$\overline A$ of $A$ in this quotient still has order $72$.
Consequently,
\[
[\Ort_\Hdg(L)/\{\pm1\}:\overline A]
\geq\frac{18\,|W|}{72}
=\frac{|W|}{4}=331\,776.
\]
The result follows.
\end{proof}

We now compare the actions of $\Aut(X_r)$ and $\Ort_\Hdg(L)$ on the transcendental lattice. For very general $r$ in the sense of \cref{cor:rho}, this lattice is $T_0=T(X_r)$, while $N_0$ is the orthogonal complement of the canonical class in $\NS(X_r)$. The following proposition determines the Hodge isometries of $T_0$, and the image and the kernel of the restriction map from $\Aut(X_r)$.

\begin{proposition}
\label{prop:kernel}
Let $r$ be very general. Then the group of Hodge isometries of $T_0$ is $\Ort_\Hdg(T_0)=\mu_6=\langle\tau_1|_{T_0}\rangle$. Moreover, the restriction map $\pi\colon\Aut(X_r)\to\Ort_\Hdg(T_0)$ is surjective, and its kernel is the \emph{dicyclic group} $\operatorname{Dic}_{12}$ of order $12$ generated by $\delta=\tau_1\tau_2^{-1}$ and $j=\tau_1^3\iota$, with $j^2=\delta^3$ and $j\delta j^{-1}=\delta^{-1}$.
\end{proposition}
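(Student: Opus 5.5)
The plan is to compute $\Ort_\Hdg(T_0)$ from the endomorphism field. For very general $r$, \cref{cor:rho} gives $T_0=T(X_r)$ of rank $4$ and $\End_\Hdg(T_0\otimes\QQ)=\QQ(\zeta_6)$, acting through $\tau_1|_{T_0}$. Any Hodge isometry $g$ of $T_0$ is in particular a Hodge endomorphism. Hence $g=x+y\,\tau_1|_{T_0}$ is an element of $\QQ(\zeta_6)^\times$ that preserves the lattice $T_0$ and has finite order, because $\Ort_\Hdg$ of a transcendental lattice with $h^{2,0}=1$ is finite. The finiteness follows since $g$ acts on $\operatorname{H}^{2,0}$ by a scalar of modulus one and preserves the definite complement. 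Alternatively, an isometry $g$ satisfies $g^*g=1$ for the Rosati-type involution, which on the CM field $\QQ(\zeta_6)$ is complex conjugation, so $|g|=1$ in every embedding. The elements of $\QQ(\zeta_6)$ that are roots of unity are exactly $\mu_6$. Conversely, each power of $\tau_1|_{T_0}$ is an integral Hodge isometry. Thus $\Ort_\Hdg(T_0)=\langle\tau_1|_{T_0}\rangle$, and this group is cyclic of order $6$ because $\tau_1$ acts on $\operatorname{H}^{2,0}$ by the primitive root $\zeta_6^{-1}$.

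Next I would compute the restriction map $\pi$ on generators, using the characters on $V_1^{\tr}\otimes\CC=\langle\omega_1,\omega_2,\omega_4,\omega_6\rangle$ together with \cref{prop:iota}. The element $\tau_1^m\tau_2^n$ acts on $H_{(5,5)}$ by $\zeta_6^{-(m+n)}$ and on $H_{(1,1)}$ by $\zeta_6^{m+n}$. So $\pi(\tau_1^m\tau_2^n)=\tau_1^{m+n}|_{T_0}$, which shows that $\pi$ is already surjective on $G$, since $\pi(\tau_1)$ is a generator. The involution $\iota$ acts by $-1$ on $\omega_1,\omega_2,\omega_4,\omega_6$, so $\pi(\iota)=-1=\tau_1^3|_{T_0}$.

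The kernel therefore has order $72/6=12$. It contains $\delta=\tau_1\tau_2^{-1}$, which has order $6$ and satisfies $m+n=0$. It also contains $j=\tau_1^3\iota$, since $\pi(j)=\tau_1^6=1$. For the relations, $\iota\tau_1\iota=\tau_2$ gives
\[
j^2=\tau_1^3\iota\tau_1^3\iota=\tau_1^3\tau_2^3=\delta^3,
\]
and $j\delta j^{-1}=\tau_1^3(\iota\delta\iota)\tau_1^{-3}=\tau_2\tau_1^{-1}=\delta^{-1}$. Since $j\notin G$, the subgroup $\langle\delta,j\rangle$ has order $12$, so it is the whole kernel, and it is $\operatorname{Dic}_{12}$ by these relations. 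It remains to check that $j^2=\delta^3$ is not the identity, which holds because $\tau_1^3\tau_2^3\neq1$ by \cref{prop:faithful}; this confirms that the group is dicyclic and not dihedral.

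The main obstacle is the first step, namely justifying that every Hodge isometry lies in $\End_\Hdg$ and that finite-order units of $\QQ(\zeta_6)$ are exactly $\mu_6$. The plan for that step is the unit-modulus argument above.
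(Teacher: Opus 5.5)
Your proposal is correct and follows essentially the same route as the paper: a Hodge isometry of $T_0$ lies in $\End_\Hdg=\QQ(\zeta_6)$, finiteness (the compactness/unit-modulus argument) forces it into $\mu_6=\langle\tau_1|_{T_0}\rangle$, and the kernel of $\pi$ is read off from the characters $(5,5)$ and $(1,1)$ on $\langle\omega_1,\omega_2,\omega_4,\omega_6\rangle$ together with $\iota^*=-1$ there. The only cosmetic difference is that you identify the kernel by the order count $72/6=12$, whereas the paper lists the elements $\tau_1^m\tau_2^n\iota$ with $m+n\equiv3\pmod 6$.
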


\begin{proof}
The Hodge structure $T=T_0\otimes\QQ$ is irreducible and $E=\End_\Hdg(T)=\QQ(\zeta_6)$ by assumption. A Hodge isometry $g$ of $T_0$ lies in $E^\times$, and it preserves the positive definite real plane $(\operatorname{H}^{2,0}\oplus \operatorname{H}^{0,2})\cap T_\RR$ and its orthogonal complement, so $\Ort_\Hdg(T_0)$ is contained in a compact group. Being discrete, it is finite. A finite order element of $\QQ(\zeta_6)^\times$ is a sixth root of unity, so $\Ort_\Hdg(T_0)\subseteq\mu_6$, and $\tau_1|_{T_0}$, which acts on $\operatorname{H}^{2,0}$ by $\zeta_6^{-1}$, generates $\mu_6$. In particular $\pi$ is surjective.

On $T_0\otimes\CC=\langle\omega_1,\omega_2,\omega_4,\omega_6\rangle$ the element $\tau_1^m\tau_2^n$ acts by $\zeta_6^{-s}$ on the $(5,5)$-part and by $\zeta_6^{s}$ on the $(1,1)$-part, where $s=m+n$. So it lies in $\ker\pi$ if and only if $s\equiv0\pmod6$, which gives the cyclic group $\langle\delta\rangle$ of order $6$. By \cref{prop:iota}, $\iota$ acts as $-1$ on $T_0$, so $\tau_1^m\tau_2^n\iota$ lies in $\ker\pi$ if and only if $s\equiv3\pmod6$. This gives six further elements, among them $j$. Finally $\iota\tau_1\iota=\tau_2$ gives $j\delta j^{-1}=\delta^{-1}$ and $j^2=\tau_1^3\tau_2^3=\delta^3$.
\end{proof}

To pass from $T_0$ to $L$, we also need the action on $N_0$. Since $L$ is unimodular and $T_0$ and $N_0$ are orthogonal complements of each other, a Hodge isometry of $L$ is determined by its restrictions to $T_0$ and $N_0$, which must induce compatible automorphisms of the discriminant groups, as recalled after \cref{not:hodge}. This gives the following description of $\Ort_\Hdg(L)$.

\begin{proposition}
\label{prop:glue}
Let $r$ be very general. Restriction to $T_0$ gives an exact
sequence
\[
1\longrightarrow\Ort(N_0)^\sharp
\longrightarrow\Ort_\Hdg(L)
\longrightarrow\mu_6
\longrightarrow1.
\]
Identifying $\Aut(X_r)$ with its image in $\Ort_\Hdg(L)$,
we have $
\Aut(X_r)\cap\Ort(N_0)^\sharp
=\ker\pi\simeq\operatorname{Dic}_{12},
$ where $\pi$ is the restriction map to $T_0$.
Moreover, $
|\Ort(N_0)^\sharp|\geq6\,|W|=7\,962\,624.
$
\end{proposition}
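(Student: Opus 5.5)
We will derive the exact sequence from the gluing description recalled after \cref{not:hodge}, applied to the pair $T_0$, $N_0$ inside the unimodular lattice $L$. For very general $r$ we have $T_0=T(X_r)$ and $N_0=\NS(X_r)\cap K^\perp$. Hence every Hodge isometry $g$ of $L$ preserves $T_0\otimes\QQ$, which is the smallest rational subspace whose complexification contains $\operatorname{H}^{2,0}$. It therefore also preserves $N_0$, so $g$ restricts to a pair $(g_T,g_N)$ with $g_T\in\Ort_\Hdg(T_0)=\mu_6$ by \cref{prop:kernel}. Conversely, $N_0\otimes\CC$ is of type $(1,1)$, so every isometry of $N_0$ is compatible with the Hodge structure. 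Thus a pair $(g_T,g_N)\in\mu_6\times\Ort(N_0)$ extends to a Hodge isometry of $L$ exactly when $\gamma\circ\bar g_T=\bar g_N\circ\gamma$, where $\gamma\colon\cA_{T_0}\to\cA_{N_0}$ is the glue isomorphism. The kernel of restriction to $T_0$ consists of the extensions of pairs $(1,g_N)$. The compatibility condition then reads $\bar g_N=\mathrm{id}$, so the kernel is $\Ort(N_0)^\sharp$, and restriction to $N_0$ identifies the two groups. Surjectivity onto $\mu_6$ follows because $\tau_1\in\Aut(X_r)\subseteq\Ort_\Hdg(L)$ restricts to a generator of $\mu_6$, by \cref{prop:kernel}.

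For the intersection with $\Aut(X_r)$, an element of $\Aut(X_r)$ lies in $\Ort(N_0)^\sharp$, viewed inside $\Ort_\Hdg(L)$, exactly when it acts trivially on $T_0$, that is, when it lies in $\ker\pi$. By \cref{prop:kernel}, this kernel is $\operatorname{Dic}_{12}$. The only point needing care is that the identification of $\Ort(N_0)^\sharp$ with the kernel of restriction respects the embedding of $\Aut(X_r)$. This is immediate, because an element acting trivially on $T_0$ is determined by its restriction to $N_0$.

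For the lower bound, we note that the group $W$ constructed in the proof of \cref{thm:B} acts trivially on $V_1\oplus V_2\supseteq T_0\otimes\QQ$, so $W\subseteq\Ort(N_0)^\sharp$. The group $\ker\pi\simeq\operatorname{Dic}_{12}$ also lies in $\Ort(N_0)^\sharp$, and it normalizes $W$, since $\Aut(X_r)$ normalizes $W$. Hence $W\cdot\ker\pi$ is a subgroup of order $|W|\,|\ker\pi|/|W\cap\ker\pi|$. From the proof of \cref{thm:B} we know $W\cap\Aut(X_r)=\{1,\tau_1^3\tau_2^3\}$. The element $\tau_1^3\tau_2^3=\delta^3$ lies in $\ker\pi$, so $|W\cap\ker\pi|=2$ and the order of $W\cdot\ker\pi$ is $12|W|/2=6|W|=7\,962\,624$. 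The main obstacle is the first paragraph: we must make sure that every Hodge isometry preserves $T_0$ and $N_0$, and that no Hodge condition beyond gluing is imposed on $N_0$. Both points rely on very generality, namely $\rho=17$ and \cref{prop:kernel}.
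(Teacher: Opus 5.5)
Your proposal is correct and follows the paper's proof essentially step by step. You restrict a Hodge isometry to the pair $(T_0,N_0)$, apply the gluing criterion with $\Ort_\Hdg(T_0)=\mu_6$ from \cref{prop:kernel}, identify the kernel with $\Ort(N_0)^\sharp$, and identify its intersection with $\Aut(X_r)$ as $\ker\pi$. The only small difference is in the bound: the paper divides $|WA|=36|W|$ by $6$ using $|\Ort_\Hdg(L)|=6|\Ort(N_0)^\sharp|$, whereas you exhibit the subgroup $W\cdot\ker\pi$ of order $6|W|$ directly. That subgroup is exactly $WA\cap\Ort(N_0)^\sharp$, so the two arguments are equivalent.
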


\begin{proof}
A Hodge isometry of $L$ preserves
$L\cap\operatorname{H}^{1,1}(X_r)=N_0$, hence also its
orthogonal complement $T_0$ in $L$. It therefore restricts
to a pair
$(g_T,g_N)\in\Ort_\Hdg(T_0)\times\Ort(N_0)$.
Since $L$ is unimodular and $T_0$, $N_0$ are primitive
orthogonal complements, the glue group $L/(T_0\oplus N_0)$
is the graph of an isomorphism
$\gamma\colon\cA_{T_0}\to\cA_{N_0}$ satisfying
$q_{N_0}\circ\gamma=-q_{T_0}$.
By \cref{not:hodge}, a pair $(g_T,g_N)$ of isometries
extends to an isometry of $L$ if and only if
$\gamma\circ\bar g_T=\bar g_N\circ\gamma$.
Since $N_0$ is of type $(1,1)$, this extension preserves
the Hodge decomposition whenever $g_T$ does. The restriction map
$\Ort_\Hdg(L)\to\Ort_\Hdg(T_0)=\mu_6$
is surjective because $\tau_1|_{T_0}$ generates $\mu_6$.
Its kernel consists of the pairs $(\mathrm{id},g_N)$
satisfying $\bar g_N=\mathrm{id}$, and is therefore
identified with $\Ort(N_0)^\sharp$.
Its intersection with $\Aut(X_r)$ is precisely $\ker\pi$,
which is isomorphic to $\operatorname{Dic}_{12}$
by \cref{prop:kernel}.
Since $N_0$ is negative definite, $\Ort(N_0)^\sharp$ is
finite, and the exact sequence gives
$|\Ort_\Hdg(L)|=6\,|\Ort(N_0)^\sharp|$.
The claimed bound follows from \cref{thm:B}.
\end{proof}

For very general $r$, the preceding propositions show
that the action on the transcendental lattice identifies
a quotient $\ZZ/6\ZZ$ of $\Aut(X_r)$ with
$\Ort_\Hdg(T_0)$. The difference in \cref{thm:B}
therefore comes from the action on $N_0$: every isometry
of $N_0$ acting trivially on $\cA_{N_0}$ extends uniquely
to a Hodge isometry of $L$ fixing $T_0$ pointwise. Automorphisms of $X_r$ also preserve the ample cone. The natural continuation of \cref{thm:B} is therefore the study of Hodge isometries preserving this cone. To formulate this question, recall that
$\Lambda=\ZZ K\oplus L$, so every
$g\in\Ort_\Hdg(L)$ extends uniquely to an isometry
$\widetilde g$ of $\Lambda$ fixing $K$. Put
\[
\Ort^{+}_\Hdg(L)
=
\{g\in\Ort_\Hdg(L):
\widetilde g(\operatorname{Amp}(X_r))
=\operatorname{Amp}(X_r)\}.
\]
Since every automorphism of $X_r$ fixes $K$ and
preserves the ample cone, we have $
\Aut(X_r)\subseteq\Ort^{+}_\Hdg(L).
$
We do not know whether this inclusion is an equality.
Such an equality would give, for automorphisms of $X_r$,
an analogue of the strong Torelli theorem for K3
surfaces \cite{burns1975torelli}.
Catanese proved that the global period map for surfaces
with $K^2=p_g=1$ is not injective
\cite[Theorem~3.3]{Cat80}. This result concerns the
determination of a surface by its periods, and does
not settle the question above for a fixed surface $X_r$. Our computations with the Jacobian ring do not determine
the ample cone. Describing this cone requires information
about the classes of irreducible curves of negative
self-intersection and the integral structure of $N_0$.
The latter is only partially described by
\cref{prop:roots}, which identifies the root sublattice $
A_2(-1)^{\oplus2}\oplus D_4(-1)^{\oplus2}
\subseteq N_0.
$

%----------------------------------------------------------------------
%\section*{Acknowledgments}
%----------------------------------------------------------------------
%
%ACKNOWLEDGMENTS.

\bibliographystyle{alpha}
\bibliography{ref}

\end{document}